\newcommand{\Op}{\mathcal{O}p}
\newcommand{\Pb}{\operatorname{Pb}}
\newcommand{\pb}{\operatorname{pb}}
\newcommand{\bp}{\operatorname{bp}}
\newcommand{\DDisc}{\mathbb{D}}
\newcommand{\eps}{\varepsilon}
\newcommand{\delt}{\delta}
\newcommand{\ellp}{\ell^\prime}
\newcommand{\ellpp}{\ell^{\prime\prime}}
\newtheorem{thm}{Theorem}[section]
\newtheorem{defn}[thm]{Definition}
\newtheorem{prop}[thm]{Proposition}
\newtheorem{clm}[thm]{Claim}
\newtheorem{cor}[thm]{Corollary}
\newtheorem{lem}[thm]{Lemma}
\newtheorem*{lem*}{Lemma}
\newtheorem*{thm*}{Theorem}
\newtheorem{thmdiff}{Theorem}
\theoremstyle{remark}
\newtheorem{rem}[thm]{Remark}
\newcommand{\doublewidetilde}[1]{{%
		\mathpalette\double@widetilde{#1}%
	}}
	\newcommand{\double@widetilde}[2]{%
		\sbox\z@{$\m@th#1\widetilde{#2}$}%
		\ht\z@=.9\ht\z@
		\widetilde{\box\z@}%
	}
\newcommand{\triplewidetilde}[1]{{%
		\mathpalette\triple@widetilde{#1}
	}}
	\newcommand{\triple@widetilde}[2]{%
		\sbox\z@{$\m@th#1\doublewidetilde{#2}$}%
		\ht\z@=.9\ht\z@
		\widetilde{\box\z@}%
	}
\newcommand{\VI}{V_{p^{}_1}^{X_{N-1}}}
\newcommand{\UIp}{{U^\prime}_{p^{}_1}^{X_{N-1}}}
\newcommand{\UIpp}{{U^{\prime\prime}}_{p^{}_1}^{X_{N-1}}}
\newcommand{\VNminI}{V_{p^{}_{N-1}}^{X_N}}
\newcommand{\UNminIp}{{U^\prime}_{p^{}_{N-1}}^{X_{N}}}
\newcommand{\UNminIpp}{{U^{\prime\prime}}_{p^{}_{N-1}}^{X_{N}}}
\DeclareRobustCommand{\DDelta}{{
	\mathchoice
	{\Delta\!\!\!\!{\scalebox{0.7}{\reflectbox{$\Delta$}}}}
    {\Delta\!\!\!\!{\scalebox{0.7}{\reflectbox{$\Delta$}}}}
	{\Delta\!\!\!\!\hspace{0.2pt}{\scalebox{0.55}{\reflectbox{$\Delta$}}}}
	{\Delta\!\!\!\hspace{-0.3pt}{\scalebox{0.4}{\reflectbox{$\Delta$}}}}
}
}
\begin{document}
\title{ A homotopical viewpoint at the Poisson bracket invariants for tuples of sets.}
\author{Yaniv Ganor$^{1}$}
\footnotetext[1]{Partially supported by European Research Council Advanced grant 338809.}
\maketitle
\begin{abstract}
	We suggest a homotopical description of the Poisson bracket invariants for tuples of closed sets in symplectic manifolds.
	It implies that these invariants depend only on the union of the sets along with topological data.
\end{abstract}

\tableofcontents

\section{Introduction}
In \cite{BEP} Buhovski, Entov and Polterovich defined invariants of triples and quadruples of compact sets in a symplectic manifold, using certain variational problems involving the functional $(F,G) \mapsto \left\Vert\left\lbrace F, G\right\rbrace\right\Vert$. Specifically, for three compact sets $X,Y,Z$ in a symplectic manifold, $(M,\omega)$, the following invariant was defined:
\[ \pb_3(X,Y,Z) := \inf\left\lbrace \left\Vert \left\lbrace  F,G \right\rbrace\right\Vert \,\middle|\, F,G\in C^\infty_c(M), F\vert^{}_X \le 0, G\vert^{}_Y \le 0, (F+G)\vert^{}_Z \ge 1 \right\rbrace,\]
where $\{\cdot,\cdot\}$ is the Poisson bracket, and $\Vert \cdot \Vert$ is the supremum norm. Analogously, for four compact sets $X_0,X_1,Y_0,Y_1$ such that $X_0 \cap X_1 = Y_0 \cap Y_1 = \emptyset$ the following invariant was defined:
\[ \pb_4(X_0,X_1,Y_0,Y_1) := \inf\left\lbrace \left\Vert \left\lbrace  F,G \right\rbrace\right\Vert \,\middle|\, F,G\in C^\infty_c(M), 
\begin{array}{c}
	F\vert^{}_{X_0} \le 0  \\
	F\vert^{}_{X_1} \ge 1
\end{array} ,
\begin{array}{c}
	G\vert^{}_{Y_0} \le 0 \\
	G\vert^{}_{Y_1} \ge 1
\end{array}
 \right\rbrace.\]
In \cite{BEP} lower bounds for these invariants are computed. Such lower bounds tend to involve various flavors of holomorphic curves theory. In the same paper there are also proofs that the inequalities appearing in the definition can be replaced by equalities in some neighborhoods of the sets, which depend on the pair $(F,G)$, and that the functions can be assumed to be bounded between $0$ and $1$. Moreover, the definition of similar invariants for $n$-tuples of sets is sketched. These invariants are denoted by $\pb_n$.

The $\pb_4$ invariant is shown in \cite{BEP} to also have a dynamical interpertation in terms of time length of Hamiltonian chords, and in \cite{EGM} it was applied in the study of a topological invariant of Lagrangians.

In this paper we prove few results unifying $\pb_3$, $\pb_4$ and the general $\pb_n$. Moreover we show that $\pb_4$ depends only on the union of the sets, $X_0, X_1, Y_0$ and $Y_1$ which we denote by $X$, and on a first integer-valued cohomology class of $X$, which encodes the homotopical manner in which $X$ is decomposed into four sets. The first of our results is the following theorem:

\begin{thmdiff}\label{thm:pb3=pb4}
	Let $M$ be a symplectic manifold, and let $X_0,X_1,Y_0,Y_1$ be four compact subsets such that $X_0 \cap X_1 = Y_0 \cap Y_1 = \emptyset$.
	Then 
	\[
	\pb_4\left(X_0,X_1,Y_0,Y_1\right)=2\cdot\pb_3\left(X_0,Y_0,X_1\cup Y_1\right).
	\]
\end{thmdiff}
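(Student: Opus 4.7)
My plan is to establish both inequalities $\pb_4 \le 2\pb_3$ and $\pb_3 \le \pb_4/2$ by constructing admissible pairs for one invariant from admissible pairs for the other, with the factor of $2$ arising from an explicit Jacobian computation.

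For $\pb_4 \le 2\pb_3$: given $(F, G)$ admissible for $\pb_3(X_0, Y_0, X_1 \cup Y_1)$, I would apply the BEP normalizations so that $F, G \in [0, 1]$ with the defining inequalities holding as equalities on neighborhoods. Further modification in regions where one of $F$, $G$, or $F + G$ is locally constant (hence where $\{F, G\} \equiv 0$, so the modification is free) allows the stronger assumptions $F \equiv G \equiv 0$ near $X_0 \cup Y_0$, $F \equiv 1, G \equiv 0$ near $X_1$, and $F \equiv 0, G \equiv 1$ near $Y_1$ (using $X_1 \cap Y_1 = \emptyset$, with a perturbation argument in the general case). Setting $s := F + G$ and $h := (F - G + 1)/2$, define
$$
\tilde F := (1-s)F + hs, \qquad \tilde G := (1-s)G + (1-h)s.
$$
Direct inspection on each of the four sets confirms $(\tilde F, \tilde G)$ is admissible for $\pb_4$. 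The Poisson bracket computation exploits the identities $\tilde F + \tilde G = s(2-s)$ and $\tilde F - \tilde G = F - G$, together with $\{F+G, F-G\} = -2\{F, G\}$, to yield $\{\tilde F, \tilde G\} = 2(1-s)\{F, G\}$; since $s \in [0, 1]$, this gives $\|\{\tilde F, \tilde G\}\| \le 2\|\{F, G\}\|$, and hence $\pb_4 \le 2\pb_3$.

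For the reverse inequality $\pb_3 \le \pb_4/2$: invert the above construction, starting from $(F, G)$ admissible for $\pb_4$ normalized analogously (in particular $F \equiv 0$ near $Y_1$ and $G \equiv 0$ near $X_1$, so that $F + G \in [0, 1]$). Define $(\tilde F, \tilde G)$ by $\tilde F + \tilde G := 1 - \sqrt{1 - (F+G)}$ and $\tilde F - \tilde G := F - G$, yielding an admissible pair for $\pb_3$ whose inverse-Jacobian formula gives $\{\tilde F, \tilde G\} = \{F, G\}/(2\sqrt{1 - (F+G)})$. The Jacobian factor is singular as $F + G \to 1$, but under the normalizations $\{F, G\}$ vanishes in a neighborhood of this locus, and a careful limiting/smoothing argument yields $\|\{\tilde F, \tilde G\}\| \le \|\{F, G\}\|/2$.

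The principal obstacle is the detailed verification of the several normalizations---especially ensuring $F$ and $G$ can be made to vanish on the appropriate sets without altering $\|\{F, G\}\|$---together with the analysis of the reverse construction's Poisson bracket through the Jacobian singularity at $F + G = 1$.
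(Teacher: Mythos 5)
Your conceptual starting point---postcomposing $(F,G)$ with a two-dimensional map of Jacobian at most $2$---is sound, and the computation $\{\tilde F,\tilde G\}=2(1-s)\{F,G\}$ is correct. But there are two genuine gaps. First, the hypotheses permit $X_1\cap Y_1\ne\emptyset$, and near such a point admissibility for $\pb_4$ forces $\tilde F\ge 1$ and $\tilde G\ge 1$ simultaneously, i.e.\ $(\tilde F,\tilde G)=(1,1)$. Your identity $\tilde F+\tilde G=s(2-s)$ with $s\in[0,1]$ gives $\tilde F+\tilde G\le 1$ everywhere, so the corner $(1,1)$ is structurally unreachable; no perturbation argument repairs this, since whenever one of $\tilde F,\tilde G$ is bounded away from $1$ near $X_1\cap Y_1$ the rescaling needed to restore admissibility destroys the factor of $2$. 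Second, the claimed ``free modification'' to achieve $F\equiv 1,G\equiv 0$ near $X_1$ (and its analogues) is precisely where the real work lies and is not free: the BEP normalization gives only $F+G\equiv 1$ near $X_1\cup Y_1$, and the Poisson bracket vanishes there only because the image lies in the one-dimensional hypotenuse---any modification that moves the image off or along that curve alters the level sets in an open collar and can create new bracket. Moreover your ansatz $F\equiv G\equiv 0$ near $X_0\cup Y_0$ is flatly inconsistent with $F+G\equiv 1$ near $Y_1$ whenever $X_0\cap Y_1\ne\emptyset$, which is allowed.

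The paper circumvents both problems by passing to the unit-area invariants $\Pb_N$ (so the constant $2$ becomes a pure area normalization between a triangle of area $1$ and a square of area $1$), and proving $\Pb_N=\Pb_{N-1}$ in two steps: small bump functions added to one coordinate of $\Phi$ push the images of the last two sets to disjoint subarcs of a common boundary arc (Lemma~\ref{lem:pb3pb4_step1}), and an $\eps$-pseudoretract then collapses the intervening subarc to a new vertex while increasing the bracket only by $\frac{1+\eps}{1-\eps}$ (Lemma~\ref{lem:pb3pb4_step2}). The intersection $X_1\cap Y_1$ lands exactly on that vertex, which is the corner your algebraic map cannot reach. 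The reverse inequality in your proposal inherits the same normalization gap, plus a new one: on the square for $\pb_4$, $F+G$ ranges over $[0,2]$, so $F+G\le 1$ does not follow from the stated normalization and must itself be arranged by a retraction; only after that can one argue that $\{F,G\}$ vanishes where $\sqrt{1-(F+G)}$ is singular.
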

\begin{rem}
	In \cite{BEP} the following weaker inequality was proved:
	\[\pb_4\left(X_0,X_1,Y_0,Y_1\right)\ge \pb_3\left(X_0,Y_0,X_1\cup Y_1\right).\]
\end{rem}
Theorem \ref{thm:pb3=pb4} is in fact part of a more general phenomenon, namely, all the $\pb_n$ invariants introduced in \cite{BEP} can be reduced to $\pb_3$.
\begin{defn} 
	We say that $N$ sets, $X_1,\ldots,X_N$, intersect cyclically if ${X_i\cap X_j = \emptyset}$ whenever $i\not\in\{{j-1},j,{j+1}\}$ where $j-1$ and $j+1$ are computed cyclically $\operatorname{mod} N$.
\end{defn}
We define invariants, $\Pb_N$, of $N$-tuples of cyclically intersecting compact subsets of a symplectic manifold $M$. For compact $M$ they can be defined as follows:
Fix $\Delta$, a closed compact convex subset of $\mathbb{R}^2$ of $\operatorname{Area}(\Delta)=1$ with $\partial\Delta$ either smooth or polygonal. Fix $N$ points, $p_i\in\partial\Delta$, ordered cyclically counterclockwise. Denote by $\gamma_i$ the arc along $\partial\Delta$ emanating from $p_i$ towards the next point in the counterclockwise order. Define:
\[
\Pb_N(X_1,\ldots,X_N) := \inf\left\{\left\Vert\left\lbrace\Phi_1,\Phi_2\right\rbrace\right\Vert \,\middle|\,
\begin{tabular}{@{}l@{}}
$\Phi=(\Phi_1,\Phi_2)\colon M\to\Delta\subset\mathbb{R}^2$, such that\\ $\forall i, \exists \underset{\text{open}}{U_i}\supset X_i$ such that $\Phi\left(U_i\right)\subseteq \gamma_i$
\end{tabular}
\right\}.
\]
We show that resulting quantity, $\Pb_N(X_1,\ldots,X_N)$, depends neither on the choice of the domain $\Delta$ nor on the choice of the points $p_i\in\partial\Delta$ as long as they are chosen with the same cyclical order. Moreover, we show that $2\pb_3(X,Y,Z)= \Pb_3(X,Y,Z)$ and that $\pb_4(X_0,X_1,Y_0,Y_1)=\Pb_4(X_0,Y_0,X_1,Y_1)$. (Note that for $\Pb_N$ we list the sets in a cyclical order). In fact up to multiplication by a constant, which is due to the normalization of the area of $\Delta$, our $\Pb_n$ is equal to $\pb_n$ from \cite{BEP}. We prove in \textbf{Theorem \ref{thm:actualThm1}} the following statement:\\ 
\textit{For $N\ge 4$, and $X_1,\ldots,X_N$ intersecting cyclically, it holds that
\[
	\Pb_N\left(X_1,\ldots,X_N\right)=\Pb_{N-1}\left(X_1,\ldots,X_{N-1}\cup X_N\right),
\]}
from which Theorem \ref{thm:pb3=pb4} follows.
This reduces from $\Pb_{N}$ to $\Pb_{N-1}$. As a partial converse, we show that one can also go the other way, namely, recover $\Pb_{N}$ from the data of $\Pb_{N+1}$s, replacing the intersection $X_1 \cap X_N$ with a compact neighborhood which is added as a new set to the tuple, and taking limit over such neighborhoods.
\begin{thmdiff}\label{thm:pb3 limit}
	Let $X_1,\ldots,X_N$ be compact sets intersecting cyclically and if ${N=3}$ assume also that $X_1\cap X_2 \cap X_3=\emptyset$. Let $K_n$ be a decreasing sequence of compact neighborhoods of $X_1 \cap X_N$, converging to $X_1\cap X_N$ in the Hausdorff distance. Moreover assume that $K_1\cap \left(\bigcup_{j=2}^{N-1} X_j\right)=\emptyset$. Then the following limit exists and equals $\Pb_N(X_1,\ldots,X_N)$:
	\[
	\lim_{K_n\searrow X_1\cap X_N} \Pb_{N+1}(\overline{X_1 \setminus K_n},X_2,\ldots,X_{N-1},\overline{X_N\setminus K_n}, K_n) = \Pb_N(X_1,\ldots,X_N).
	\]
\end{thmdiff}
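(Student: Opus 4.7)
The plan is to combine Theorem~\ref{thm:actualThm1} with explicit constructions of admissible maps for both inequalities.

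Applying Theorem~\ref{thm:actualThm1} to the $(N+1)$-tuple $(\overline{X_1\setminus K_n},X_2,\ldots,X_{N-1},\overline{X_N\setminus K_n},K_n)$, merging $K_n$ with each of its cyclic neighbours in turn (and using the identities $\overline{X_N\setminus K_n}\cup K_n=X_N\cup K_n$ and $\overline{X_1\setminus K_n}\cup K_n=X_1\cup K_n$), gives
\[
\Pb_{N+1}(\overline{X_1\setminus K_n},\ldots,K_n)=\Pb_N(\overline{X_1\setminus K_n},X_2,\ldots,X_N\cup K_n)=\Pb_N(X_1\cup K_n,X_2,\ldots,\overline{X_N\setminus K_n}).
\]
It thus suffices to show that both these $\Pb_N$'s converge to $\Pb_N(X_1,\ldots,X_N)$ as $K_n\searrow X_1\cap X_N$.

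For the upper bound $\limsup\le\Pb_N(X_1,\ldots,X_N)$, take $\Phi\colon M\to\Delta$ admissible for $\Pb_N(X_1,\ldots,X_N)$ with $\|\{\Phi_1,\Phi_2\}\|<\Pb_N(X_1,\ldots,X_N)+\eps$ and open neighbourhoods $U_i\supset X_i$ with $\Phi(U_i)\subseteq\gamma_i$. Since $\gamma_1\cap\gamma_N=\{p_1\}$, the map $\Phi$ is identically $p_1$ on $W:=U_1\cap U_N$, which is an open neighbourhood of $X_1\cap X_N$. For $n$ with $K_n\subseteq W$, the set $U_N\cup W$ is an open neighbourhood of $X_N\cup K_n$ on which $\Phi$ takes values in $\gamma_N\cup\{p_1\}=\gamma_N$, so $\Phi$ is admissible for $\Pb_N(\overline{X_1\setminus K_n},X_2,\ldots,X_N\cup K_n)$. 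Hence $\Pb_{N+1}(\overline{X_1\setminus K_n},\ldots,K_n)\le\|\{\Phi_1,\Phi_2\}\|$ for all such $n$, yielding the upper bound.

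For the lower bound $\liminf\ge\Pb_N(X_1,\ldots,X_N)$, take $\Phi_n\colon M\to\Delta$ admissible for $\Pb_N(X_1\cup K_n,X_2,\ldots,\overline{X_N\setminus K_n})$ with bracket norm at most $\Pb_{N+1}+\eps$, and let $U_1^n\supset X_1\cup K_n$, $U_N^n\supset\overline{X_N\setminus K_n}$ be the corresponding neighbourhoods. The only obstruction to $\Phi_n$ being directly admissible for $\Pb_N(X_1,\ldots,X_N)$ is on $X_N\cap\operatorname{int}K_n$: there $\Phi_n$ takes values in $\gamma_1$ (because $K_n\subseteq U_1^n$) but not necessarily in $\gamma_1\cap\gamma_N=\{p_1\}$. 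Choosing $\Delta$ polygonal so $\gamma_1$ is a straight segment, I modify $\Phi_n$ in two steps: first post-compose with a smooth self-map $\psi_n\colon\Delta\to\Delta$ of Jacobian $|\det d\psi_n|\le 1$ that sends $\gamma_1$ into an arbitrarily short subarc $[p_1,q_n]\subseteq\gamma_1$ while preserving every $\gamma_j$ with $j\ne 1$ setwise (post-composition by $\psi_n$ multiplies the bracket pointwise by $\det d\psi_n$, hence does not increase its sup-norm), and then set
\[
\Phi'_n:=(1-\tilde\chi_n)(\psi_n\circ\Phi_n)+\tilde\chi_n\,p_1
\]
for a smooth bump $\tilde\chi_n$ supported in $K_n$ and equal to $1$ on a neighbourhood of $X_N\cap K_n$. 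Admissibility of $\Phi'_n$ for $\Pb_N(X_1,\ldots,X_N)$ is checked slot by slot using that $\Phi_n\equiv p_1$ on $U_1^n\cap U_N^n$ and that the convex combination along the straight segment $\gamma_1$ stays in $\gamma_1$; the additional terms in $\{\Phi'_{n,1},\Phi'_{n,2}\}$ are controlled by $\|\psi_n\circ\Phi_n-p_1\|_{\operatorname{supp}\tilde\chi_n}\cdot\|d\tilde\chi_n\|\cdot\|d(\psi_n\circ\Phi_n)\|$, which can be made $o(1)$ as $n\to\infty$ by a coordinated choice of $\psi_n$ and $\tilde\chi_n$, giving the lower bound.

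The main obstacle is the construction of $\psi_n$ with all the listed properties: contracting a sizeable portion of $\gamma_1$ while preserving every other $\gamma_j$ setwise and respecting $|\det d\psi_n|\le 1$ requires ``extra room'' in $\Delta$ that may not exist directly, since the contracted length along $\gamma_1$ must be accommodated by expansion in some direction transverse to $\gamma_1$ while the other boundary arcs remain fixed. This is expected to be handled by first replacing $\Delta$ with an equivalent area-one convex domain having $\gamma_1$ much longer than the other arcs---permitted by the stated invariance of $\Pb_N$ under the choice of domain---and then performing the construction on the enlarged domain.
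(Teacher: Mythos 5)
Your proof takes a genuinely different route from the paper's, and the route has a gap in the lower‑bound half. The paper does \emph{not} try to convert a near‑optimizer for $\Pb_N(\overline{X_1\setminus K_n},\ldots,X_N\cup K_n)$ into an admissible map for $\Pb_N(X_1,\ldots,X_N)$ by hand. Instead it translates everything into the homotopical invariant: by Theorem~\ref{thm:htpyCharacterization}, $\Pb_N(\overline{X_1\setminus K_n},\ldots,X_N\cup K_n)=\Pb_{Z_n}\bigl([g|_{Z_n}]\bigr)$ where $Z_n=X_1\cup\cdots\cup X_N\cup K_n$, and since $Z\subseteq Z_n$ and all the classes restrict to $\alpha$, Proposition~\ref{prop:PbaMonotone} immediately gives $\Pb_{Z_n}([g|_{Z_n}])\ge\Pb_Z(\alpha)=\Pb_N(X_1,\ldots,X_N)$. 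The hard work of ``re‑decomposing'' a map whose restriction to $X$ has the right homotopy class but the wrong partition is entirely absorbed into the cofibration argument in the proof of Theorem~\ref{thm:htpyCharacterization}; the paper is careful to note (Remark after the Proposition restating Theorem~\ref{thm:pb3 limit}) that the tuple $(\overline{X_1\setminus K_n},\ldots,K_n)$ is not monotone set‑by‑set, which is exactly why one passes to the union $Z_n$.

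Your upper‑bound argument is correct and is morally the same as the paper's Proposition~\ref{prop:PbaSemi}: a fixed admissible $\Phi$ is constant equal to $p_1$ on $U_1\cap U_N$, hence remains admissible for the perturbed $N$‑tuple once $K_n$ is small enough.

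The lower‑bound argument, however, does not go through, and the difficulty is more basic than the ``extra room'' issue you flag. You ask for a smooth $\psi_n\colon\Delta\to\Delta$ with $|\det d\psi_n|\le 1$, sending $\gamma_1$ into a proper subarc $[p_1,q_n]\subsetneq\gamma_1$, while preserving each $\gamma_j$ with $j\ne 1$ setwise. Consider the vertex $p_2=\gamma_1\cap\gamma_2$. The requirement $\psi_n(\gamma_2)\subseteq\gamma_2$ forces $\psi_n(p_2)\in\gamma_2$, while $\psi_n(\gamma_1)\subseteq[p_1,q_n]$ forces $\psi_n(p_2)\in[p_1,q_n]$; but $\gamma_2\cap[p_1,q_n]=\emptyset$ whenever $q_n$ lies strictly before $p_2$, so no such $\psi_n$ exists unless $q_n=p_2$, i.e.\ unless there is no contraction at all. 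Replacing $\Delta$ by a domain with a long $\gamma_1$ does not change this: the constraint at the corner $p_2$ is topological, not metric. Without $\psi_n$, the bump interpolation $\Phi'_n=(1-\tilde\chi_n)\Phi_n+\tilde\chi_n\,p_1$ fails: on $\operatorname{supp}\tilde\chi_n$ the map $\Phi_n$ can take values anywhere along $\gamma_1$, so $\|\Phi_n-p_1\|$ is $O(1)$ there, while $\|d\tilde\chi_n\|$ is not controlled as $K_n$ shrinks (and cannot be, since the statement must hold for an arbitrary decreasing sequence $K_n$). The extra Poisson‑bracket terms therefore need not go to zero. This is precisely the technical difficulty that Theorem~\ref{thm:htpyCharacterization} is designed to hide, and trying to reproduce it inline for this specific family of decompositions is essentially re‑proving that theorem in a special case.

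Concretely: keep your first step (collapsing the $(N{+}1)$‑tuple to an $N$‑tuple via Theorem~\ref{thm:actualThm1}) and your upper bound, but for the lower bound replace the $\psi_n$/bump construction with the appeal to Theorem~\ref{thm:htpyCharacterization} together with monotonicity of $\Pb_X(\alpha)$ in $X$.
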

Theorem \ref{thm:pb3 limit} allows us to relate $\pb_3$ to dynamics, see Corollary \ref{cor:pb3_dynamics}.

Together these theorems unify $\pb_3$, $\pb_4$ and their generalizations, $\pb_n$, which were defined in \cite{BEP}.
As further unification we prove that $\Pb_n$ depends only on the union of the sets $X_i$ and on some homotopical data, namely, we show that $\Pb_n$ defines a function on the set of homotopy classes of maps from the compact set $X=X_1\cup\ldots\cup X_N$ to $S^1$ (which by \cite{Morita1975} equals $H^{1}(X;\mathbb{Z})$, the first cohomology of the constant sheaf $\mathbb{Z}$), where the homotopy class (first cohomology class) describes the manner in which $X$ is decomposed.
We will use the two viewpoints on $H^{1}(X;\mathbb{Z})$, as either first integral-cohomology or homotopy classes of maps from $X \to S^1$ interchangeably.
\begin{defn}
	Denote by $\overline{B_1}$ the unit ball in $\mathbb{R}^2$ and denote by $S^1$ its boundary. Let $M$ be a symplectic manifold and $X$ be a compact subset. For any ${\alpha\in H^1(X;\mathbb{Z})}$ define:
	\[Pb^{}_X(\alpha):=\inf\left\lbrace 
	\left\Vert \left\lbrace \phi_1, \phi_2 \right\rbrace \right\Vert
	\,\middle|\,
	\begin{tabular}{@{}l@{}}
		 $\phi=(\phi_1,\phi_2)\colon M\to \overline{B_1}$ such that \\ $\phi_1,\phi_2$ have compact support, and \\
		 $\exists \underset{\text{open}}{U}\supset X,\ \phi(U)\subseteq S^1,\  \left[ \phi\vert^{}_X\right]=\alpha$
	\end{tabular}
	\right\rbrace,\]
	where $\left[ \phi\vert^{}_X\right]$ denotes the homotopy class of the function $\phi\vert^{}_X$.
\end{defn}

Let $X$ be be a compact subset of a manifold $M$, and assume $X=X_1\cup X_2 \cup X_3$ where $X_1\cap X_2 \cap X_3 = \emptyset$ and each $X_k$ is compact. Denote by $\alpha\in H^1(X;\mathbb{Z}))$ the class determined by the decomposition $X=X_1\cup X_2 \cup X_3$, in the sense that $\alpha = [f]$ where $f$ is a function, $f\colon X \to S^1$, such that $f\vert^{}_{X_i}\subset \gamma_i$ where $S^1 = \gamma_1 \cup \gamma_2 \cup \gamma_3$ is a decomposition into three consecutive arcs ordered counterclockwise. The class $\alpha$ depends neither on the decomposition of the circle into arcs (up to a cyclical order preserving relabeling) nor on the particular function $f$ chosen. We prove the following:
\begin{thmdiff}\label{thm:htpyCharacterization}
	\[Pb_3(X_1,X_2,X_3)=Pb^{}_X(\alpha).\]
\end{thmdiff}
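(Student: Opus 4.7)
The proof rests on the following vanishing observation: if $\phi = (\phi_1, \phi_2) \colon M \to \mathbb{R}^2$ is smooth and sends an open set $U \subseteq M$ into $S^1$, then $\{\phi_1, \phi_2\}\vert^{}_U \equiv 0$. Locally on $U$ one writes $\phi = (\cos\theta, \sin\theta)$ for a smooth $\theta$, and the chain rule for the Poisson bracket yields $\{\cos\theta, \sin\theta\} = -\sin\theta\cos\theta \cdot \{\theta,\theta\} = 0$. Consequently, in both $Pb_3(X_1,X_2,X_3)$ and $Pb^{}_X(\alpha)$ the bracket norm is determined by the behavior of the map outside a neighborhood of $X$, and any deformation confined to such a neighborhood that remains on $S^1$ preserves the norm exactly.

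The inequality $Pb^{}_X(\alpha) \le Pb_3(X_1,X_2,X_3)$ is the easy direction. Using the freedom, established earlier in the paper, to choose the domain $\Delta$ in the definition of $\Pb_3$ (absorbing any scaling needed to reconcile normalizations with $\overline{B_1}$), I would take $\Delta = \overline{B_1}$ so that the arcs $\gamma_1, \gamma_2, \gamma_3$ lie on $S^1$. Any $\Phi$ admissible for $\Pb_3$ then satisfies $\Phi(U_i) \subseteq \gamma_i \subseteq S^1$, hence $\Phi(U) \subseteq S^1$ for $U := \bigcup_i U_i \supset X$, and $[\Phi\vert^{}_X] = \alpha$ by the very construction of $\alpha$ from the decomposition. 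Thus $\Phi$ is admissible for $Pb^{}_X(\alpha)$, and taking infima gives the inequality.

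The substantive direction $Pb_3(X_1,X_2,X_3) \le Pb^{}_X(\alpha)$ proceeds by modifying an admissible $\phi$ for $Pb^{}_X(\alpha)$ into an admissible map for $\Pb_3$, without changing the bracket norm. Fix a reference $f_0 \colon X \to S^1$ with $f_0(X_i) \subseteq \gamma_i$; its existence uses $X_1 \cap X_2 \cap X_3 = \emptyset$, and $[f_0] = \alpha$ by the definition of $\alpha$. Since $[\phi\vert^{}_X] = \alpha = [f_0]$, there is a homotopy $h \colon X \times [0,1] \to S^1$ from $\phi\vert^{}_X$ to $f_0$. Using that $S^1$ is an ANR and $X$ is compact, this homotopy extends to an open neighborhood $V \subseteq U$ of $X$ starting from $\phi\vert^{}_V$. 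I would then patch the endpoint of the extension to $\phi$ on a collar inside $U \setminus V$ via a smooth cutoff and the angular parametrization of $S^1$, producing a map $\phi'$ that agrees with $\phi$ on $M \setminus U$, factors through $S^1$ on a neighborhood of $X$, and sends a smaller open neighborhood $U'_i \supset X_i$ into $\gamma_i$ for each $i$. The vanishing observation then gives $\{\phi'_1, \phi'_2\} = \{\phi_1, \phi_2\}$ pointwise on $M$, and $\phi'$ is admissible for $\Pb_3$, whence the inequality upon taking infima.

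The main obstacle will be the arc-placement at the patching stage: the ANR extension delivers a map into $S^1$ near $X$, but only into a $S^1$-neighborhood of each $\gamma_i$ near $X_i$, whereas $\Pb_3$-admissibility demands landing exactly inside $\gamma_i$ on an open neighborhood. Bridging this gap requires a careful choice of $f_0$ sending a neighborhood of $X_i$ in $X$ into the interior of $\gamma_i$ (possible precisely because $X_1\cap X_2\cap X_3=\emptyset$, so that $f_0$ meets $\gamma_i\cap\gamma_{i\pm 1}$ only along the closed set $X_i\cap X_{i\pm 1}$), followed by shrinking $V$ so that continuity of the extension forces the image near each $X_i$ into $\gamma_i$. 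The remaining smoothness and compact-support requirements for $\phi'$ are handled by standard partition-of-unity and smooth-approximation arguments on the manifold $M$.
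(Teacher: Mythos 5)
Your overall strategy coincides with the paper's — reduce the hard direction to extending the homotopy $\phi\vert_X\sim f_0$ to a neighborhood of $X$, then glue the modified map back to $\phi$ — but your implementation of the gluing step is genuinely simpler and gives a stronger conclusion. The paper also extracts a homotopy on a neighborhood via Proposition \ref{prop:rhoIsom} and Claim \ref{clm:pb3_nonempty}, but then invokes the cofibration Lemma \ref{lem:cofib_thicken} to extend that homotopy over the preimage of an annulus $A^1_{1-2\eps}$ (where the extended homotopy no longer takes values in $S^1$), and compensates by applying an $\eps$-pseudoretract onto $\overline{B_{1-3\eps}}$, incurring a loss factor of $(1+\eps)/(1-3\eps)$. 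Your vanishing observation — that $\{\phi_1,\phi_2\}$ vanishes on any open set where $\phi$ factors through $S^1$, since $\phi_1^2+\phi_2^2\equiv 1$ forces $\phi_1\{\phi_1,\phi_2\}=\phi_2\{\phi_1,\phi_2\}=0$ — lets you instead glue by a cutoff $\phi'(x)=H_{\chi(x)}(x)$ on $V$, which \emph{stays} on $S^1$ throughout the transition region. Since both $\phi'$ and $\phi$ land in $S^1$ on $V$ and agree off $V$, you get $\{\phi'_1,\phi'_2\}\equiv\{\phi_1,\phi_2\}$ pointwise, bypassing the cofibration machinery and the pseudoretract entirely in this direction. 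This is a clean improvement; the paper's Prop. 2.2(2) contains the same vanishing statement but does not exploit it here.

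One fix in your final paragraph does not quite work as stated, though. Continuity of the extended homotopy's endpoint $H_1$ cannot, no matter how much you shrink $V$, force $H_1$ to map a neighborhood of $X_i$ \emph{into} $\gamma_i$: since $f_0$ sends $X_i\cap X_{i+1}$ to the endpoint $\gamma_i\cap\gamma_{i+1}$, points of $V$ arbitrarily close to $X_i\cap X_{i+1}$ can be sent into the interior of $\gamma_{i+1}$. Also, $f_0$ cannot send a neighborhood of $X_i$ in $X$ into the interior of $\gamma_i$, since $X_i\cap X_{i\pm1}\subset X_i$ must land on $\partial\gamma_i$. The correct repair is the one the paper already uses: build, via a partition of unity subordinate to $(M\setminus X_i)_i$ followed by a pseudoretract (Claim \ref{clm:pb3_nonempty}), a map $f$ on an open neighborhood $U_1\cup U_2\cup U_3\supset X$ with $f(U_i)\subseteq\gamma_i$, and then invoke Proposition \ref{prop:rhoIsom} to homotope $\phi\vert_V$ to $f\vert_V$ on a smaller $V$. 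With the homotopy targeted at this $f$, your cutoff produces $\phi'=f$ where $\chi\equiv 1$, so $\phi'(U_i\cap\{\chi\equiv1\})\subseteq\gamma_i$ as required. Finally, you should say a word about the (CS)/compact-support bookkeeping in the easy direction: a $\Pb_3$-admissible $\Phi$ satisfies (CS) with respect to some $p\in\Delta$, not necessarily $p=(0,0)$; moving $p$ to the origin by the methods of Proposition \ref{prop:pb_invariance} (as the paper does) resolves this.
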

\begin{rem}
	A similar proof would work for any $\Pb_n$, thereby providing another proof for Theorem \ref{thm:pb3=pb4}, by equating all $\Pb_n$ with the same $\Pb^{}_X(\alpha)$.
	We chose to separate the proof of Theorem \ref{thm:pb3=pb4} and give a direct proof of it first, since it is a simpler proof, and moreover, restricting to $n=3$ somewhat simplifies the discussion on decompositions of $X$ versus homotopy classes of maps from $X$ to $S^1$
\end{rem}
As an application for Theorem \ref{thm:htpyCharacterization} we prove subhomogeneity of $\Pb_X(\alpha)$, a fact which has repercussions for $\pb_4$:
\begin{thmdiff}\label{thm:pbaSubhomogeneous}
	Let $X$ be compact subset of a symplectic manifold $M$. Then for all $0\ne \alpha \in H^1(X;\mathbb Z)$ and for all $0<k\in \mathbb N$ we have:
	\[
	\Pb^{}_X(k\alpha) \le k \cdot \Pb^{}_X(\alpha).
	\]
\end{thmdiff}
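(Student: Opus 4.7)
Given $\epsilon>0$, by definition of $\Pb_X(\alpha)$ one may pick a representative $\phi = (\phi_1,\phi_2)$ in the admissible class for $\alpha$ with $\|\{\phi_1,\phi_2\}\| \le \Pb_X(\alpha)+\epsilon$. The plan is to choose $\phi$ of a \emph{polar factored form} $\phi = \rho\cdot\mu$, where $\rho\colon M\to [0,1]$ is a smooth compactly supported scalar bump with $\rho\equiv 1$ on some neighborhood $U$ of $X$, and $\mu\colon \operatorname{supp}(\rho)\to S^1$ is a smooth $S^1$-valued map with $[\mu|_X]=\alpha$. Such a $\mu$ exists by extending a smooth representative of $\alpha\in[X,S^1]$ from $X$ to a tubular neighborhood (possible since $S^1$ is an ANR) and choosing $\operatorname{supp}(\rho)$ inside this neighborhood.

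For such $\phi$, define $\psi := \rho\cdot\mu^k$; it is smooth and compactly supported, $\psi(U)\subseteq S^1$, and $[\psi|_X]=k\alpha$, so $\psi$ is a valid competitor for $\Pb_X(k\alpha)$. The key pointwise identity $\{\psi_1,\psi_2\} = k\{\phi_1,\phi_2\}$ follows from a short direct computation via the Leibniz rule: writing $\mu = (\cos\theta,\sin\theta)$ locally, expansion of $\phi_j = \rho\mu_j$ yields
\[
\{\phi_1,\phi_2\} = \rho\cdot\{\rho,\theta\},
\]
while the same computation applied to $\psi = (\rho\cos k\theta, \rho\sin k\theta)$ gives $\{\psi_1,\psi_2\} = \rho\cdot\{\rho,k\theta\} = k\cdot\{\phi_1,\phi_2\}$. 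Hence $\|\{\psi_1,\psi_2\}\| = k\|\{\phi_1,\phi_2\}\| \le k(\Pb_X(\alpha)+\epsilon)$, and the theorem follows by letting $\epsilon\to 0$, provided polar factored representatives approach the infimum $\Pb_X(\alpha)$.

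The main obstacle is precisely this approximation claim: given any near-optimal $\phi^*$, produce a polar factored $\phi=\rho\mu$ with $\|\{\phi_1,\phi_2\}\| \le \|\{\phi^*_1,\phi^*_2\}\| + o(\epsilon)$. The plan is to first compose $\phi^*$ with an area-preserving diffeomorphism of $\overline{B_1}$ fixing $S^1$ pointwise (this preserves both the Poisson bracket norm and the class $\alpha$) in order to arrange that the image of $\phi^*$ on the region where $\{\phi^*_1,\phi^*_2\}\neq 0$ avoids a small neighborhood of $0\in\overline{B_1}$. On the modified realizer $\phi^*/|\phi^*|$ then extends smoothly to a map $\mu$ on $\operatorname{supp}(\phi^*)$, while a smooth approximation of $|\phi^*|$ provides $\rho$, at the cost of only an $o(\epsilon)$ increase in the bracket. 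Alternatively, one could appeal directly to Theorem~\ref{thm:htpyCharacterization}, which produces representatives of $\Pb_X(\alpha)$ via set decompositions and tubular neighborhoods that are already naturally in polar factored form.
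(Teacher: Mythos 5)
Your central computation --- that replacing the $S^1$-valued angular factor $\mu$ by $\mu^k$ multiplies $\{\phi_1,\phi_2\}=\rho\{\rho,\theta\}$ by exactly $k$ --- is correct and is the same mechanism the paper exploits: it is precisely the statement that the map $R_k(re^{i\theta})=re^{ik\theta}$ has Jacobian $k$. But the gap you flag, producing near-optimal competitors already in the polar-factored form $\rho\mu$, is real and your sketched fixes do not close it. An area-preserving diffeomorphism $D$ of $\overline{B_1}$ fixing $S^1$ pointwise cannot collapse a neighborhood of a point to a point, so $D\circ\phi^*$ still vanishes wherever $\phi^*$ hits $D^{-1}(0)$ --- in particular on the complement of a compact set when $\phi^*$ has compact support; and the obstruction sits where $\phi^*$ vanishes (so that $\phi^*/|\phi^*|$ is ill-defined and $|\phi^*|$ is non-smooth), not where the bracket vanishes, so steering the Poisson-bracket support away from $0$ does not help. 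Theorem~\ref{thm:htpyCharacterization} likewise does not hand you polar-factored competitors directly. The paper sidesteps all of this by not factoring $\phi^*$ at all: it post-composes with the single smooth map $\mathcal H_{\frac{1}{1-\eps}}\circ T\circ R_k\colon\overline{B_1}\to\overline{B_1}$, where $T(re^{i\theta})=\rho(r)e^{i\theta}$ smoothly collapses $\overline{B_\eps}$ to the origin. The composite $T\circ R_k$ is smooth (the collapse cures the singularity of $R_k$ at $0$), has Jacobian bounded by $k(1+\eps)$, and sends $S^1$ to $S^1$ up to the homothety, so $\Psi:=\mathcal H_{\frac{1}{1-\eps}}\circ T\circ R_k\circ\Phi$ is admissible for $\Pb_X(k\alpha)$ and, by Proposition~\ref{prop:pseudoretract_properties}, satisfies $\|\{\Psi_1,\Psi_2\}\|\le k\frac{1+\eps}{1-\eps}\|\{\Phi_1,\Phi_2\}\|$; sending $\eps\to 0$ finishes. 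In effect $T\circ\Phi$ \emph{is} your polar-factored competitor (with the ill-defined locus of $\Phi/|\Phi|$ truncated away) and $R_k$ implements $\mu\mapsto\mu^k$, but combining everything into one post-composition avoids ever having to extract the two factors separately.
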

This result is motivated by the work of \cite{EGM} on Lagrangian topology. 
In \cite{EGM} an invariant associated to Lagrangian submanifolds admitting fibrations over $S^1$ was introduced, named $\bp_L$, whose definition is based on $\pb^{+}_4$ (a refinement of $\pb_4$ see Remark \ref{rem:pb4plus}). For a Lagrangian $L$ admitting a smooth fibration, $f\colon L\to S^1$, one cuts $S^1$ into four consecutive arcs, denotes their preimages under $f$ by $X_0,Y_0,X_1,Y_1$ and computes 
\[\bp_L(f) := \frac{1}{pb^{+}_4(X_0,X_1,Y_0,Y_1)}.\]
The quantity $\bp_L(f)$ depends neither on the isotopy class of the smooth fibration, $f$, nor on the particular choice of arcs in $S^1$ (keeping the same cyclical ordering). For Lagrangian tori of dimensions $2$ and $3$ these classes of fibrations correspond to first integral cohomology classes of $L$, thus $\bp_L$ defines a function, $H^{1}(L;\mathbb Z)\colon L \to \mathbb [0,\infty)$.
The invariant, $\bp_L$, is shown to be smaller or equal to another invariant of $L$, named $\operatorname{def}_L$, defined in terms of Lagrangian isotopies with prescribed flux, which gives function on the real-valued first cohomology,  $\operatorname{def}_L\colon H^{1}(L;\mathbb{R}) \to \mathbb{R}$. The function $\operatorname{def}_L$ is seen to be $\mathbb{R}_+$-homogeneous immediately from the definition and in all examples in \cite{EGM} where it was manageable to compute both invariants they turned out to be equal. This raised the question whether always $\operatorname{def}_L\vert^{}_{ H^{1}(L;\mathbb{Z})} = \bp_L$.
Therefore it is interesting to study homogeneity of $\bp_L$, as it may provide evidence in deciding the question.
Moreover, our definition of $\Pb^{}_X(\alpha)$ allows one to extend the definition of $\bp_L$ as a function on the first integral cohomology for general Lagrangians without any regard to fibrations or issues of smoothness of isotopies (which were the reason why \cite{EGM} had to restrict to tori of dimension $2$ and $3$).

\begin{rem}\label{rem:pb4plus}
	In \cite{EP} Entov and Polterovich defined a refinement of $\pb_4$ replacing the supremum norm with maximum.
	\[ \pb^{+}_4(X_0,X_1,Y_0,Y_1) := \inf\left\lbrace \max \left\lbrace  F,G \right\rbrace \,\middle|\, F,G\in C^\infty_c(M), 
	\begin{array}{c}
	F\vert^{}_{X_0} \le 0  \\
	F\vert^{}_{X_1} \ge 1
	\end{array} ,
	\begin{array}{c}
	G\vert^{}_{Y_0} \le 0 \\ 
	G\vert^{}_{Y_1} \ge 1
	\end{array}
	\right\rbrace.\]
	It was used in \cite{EP} to study dynamics, as the invariant detects both the existence and the direction of Hamiltonian chords.
	Since $\pb_4^{+}$ is always nonnegative (Compact support guarantees a point with vanishing derivative of either $F$ or $G$, therefore at that point $\left\lbrace F,G \right\rbrace=0$, hence $\displaystyle\max_M\left\lbrace F,G \right\rbrace\ge0$), and since all our proofs involve upper bound inequalities with respect to nonnegative quantities, one could replace $\Vert \cdot \Vert$ by $\max(\cdot)$ and obtain analogous theorems for $\pb^+$.
\end{rem}

\textbf{Acknowledgments:} I would like to thank my advisor, Leonid Polterovich for suggesting to look at the theory of Poisson bracket invariants, $\pb_n$ for $n\ge5$.
I also thank him together with Michael Entov and Lev Buhovoski for reading the preprints of this paper and providing important feedback and comments.
%

\section{$\eps$-pseudoretracts}
One of our main tools for manipulating functions without increasing the Poisson bracket too much, is by post-composing with a function from $\mathbb{R}^2$ to $\mathbb{R}^2$ with a bound on the Jacobian, therefore we introduce the following notion:
\begin{defn}
	Let $\Delta\subset \mathbb{R}^2$ be a compact set in the plane.
	We call a smooth map $T=(T_1,T_2)\colon \mathbb{R}^2 \to \Delta$ an $\eps$-pseudoretract onto $\Delta\subset \mathbb{R}^2$ if
	\begin{itemize}
		\item $T$ is onto $\Delta$.
		\item $T$ maps $\mathbb {R}^2\setminus\Delta$ to $\partial \Delta$.
		\item $\vert DT \vert\le 1+\eps$ (Where $\vert DT\vert $ is the Jacobian determinant of $T$, \\ namely, $\frac {T^*\omega_{\mathbb {R}^2}}{\omega_{\mathbb {R}^2}}$).
	\end{itemize}
	In particular it holds for an $\eps$-pseudoretract that $\Vert\{T_1,T_2\}\Vert\le 1+\eps$.
\end{defn}
\begin{prop}\label{prop:pseudoretract_properties}
	Let $\Phi=(\Phi_1,\Phi_2)\colon M \to \mathbb{R}^2$ and let ${T=(T_1,T_2)\colon \mathbb{R}^2 \to \Delta}$ be an $\eps$-pseudoretract onto $\Delta\subseteq \mathbb{R}^2$. Consider ${T\circ\Phi = \left(\left(T\circ\Phi\right)_1,\left(T\circ\Phi\right)_2 \right)\colon M \to \mathbb{R}^2}$
	Then:
	\begin{enumerate}
		\item $\left\Vert\left\{\left(T\circ\Phi\right)_1,\left(T\circ\Phi\right)_2\right\}\right\Vert \le (1+\eps)\left\Vert\left\{\Phi_1,\Phi_2\right\}\right\Vert\ $.
		\item For all $x\in M$ such that $\Phi(x)\in \mathbb{R}^2 \setminus \Delta$:
		\[ 
		\left\{\left(T\circ\Phi\right)_1,\left(T\circ\Phi\right)_2\right\} = 0.
		\]
	\end{enumerate}
\end{prop}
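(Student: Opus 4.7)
The plan is to reduce both parts of the proposition to a single chain-rule identity expressing the Poisson bracket of the composition as the Jacobian of $T$ evaluated at $\Phi(x)$ times the original Poisson bracket. Concretely, I would establish the pointwise identity
\[
\left\{(T\circ\Phi)_1,(T\circ\Phi)_2\right\}(x) \;=\; |DT|\bigl(\Phi(x)\bigr)\cdot \left\{\Phi_1,\Phi_2\right\}(x),
\]
from which both statements follow cleanly: part (1) from the hypothesis $|DT|\le 1+\eps$, and part (2) from the fact that $T$ collapses $\mathbb{R}^2\setminus\Delta$ to the one-dimensional set $\partial\Delta$, forcing $|DT|$ to vanish there.

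To derive the identity I would just expand using the chain rule. Writing $(u,v)$ for coordinates on $\mathbb{R}^2$, the chain rule gives $d(T_i\circ\Phi) = (\partial_u T_i)(\Phi)\, d\Phi_1 + (\partial_v T_i)(\Phi)\, d\Phi_2$. Plugging this into the Poisson bracket and using bilinearity together with $\{\Phi_1,\Phi_1\}=\{\Phi_2,\Phi_2\}=0$ and $\{\Phi_2,\Phi_1\}=-\{\Phi_1,\Phi_2\}$, the cross terms collapse and what survives is exactly the determinant of $DT$ (evaluated at $\Phi(x)$) multiplying $\{\Phi_1,\Phi_2\}$. This is a purely algebraic computation and carries no geometric content beyond the chain rule.

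For part (2), the observation to make is that the hypothesis ``$T$ maps $\mathbb{R}^2\setminus\Delta$ to $\partial\Delta$'' means that $T$ factors locally through a one-dimensional set at any point of $\mathbb{R}^2\setminus\Delta$; consequently the differential $DT$ at such a point has rank at most $1$, and therefore $|DT|=0$ there. Combined with the identity above, this gives $\{(T\circ\Phi)_1,(T\circ\Phi)_2\}(x)=0$ whenever $\Phi(x)\notin\Delta$, which is exactly (2). There is really no main obstacle here --- the only subtlety worth verifying is that $T$ is assumed smooth and that the differential ``on the boundary of $\Delta$ from outside'' matches the one taken from the tangential direction along $\partial\Delta$, which is automatic from smoothness of $T$ on all of $\mathbb{R}^2$.
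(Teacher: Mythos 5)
Your proposal is correct and takes essentially the same route as the paper: the paper also reduces both parts to the pointwise chain-rule identity $d(T\circ\Phi)_1\wedge d(T\circ\Phi)_2 = \Phi^*(T^*\omega_{\mathbb{R}^2}) = (|DT|\circ\Phi)\,d\Phi_1\wedge d\Phi_2$ (this is the formula it cites from Claim~\ref{clm:symp_invariance}), so both approaches give $\{(T\circ\Phi)_1,(T\circ\Phi)_2\} = |DT|(\Phi)\cdot\{\Phi_1,\Phi_2\}$ and proceed identically; the paper simply phrases part (2) as ``locally one coordinate of $T\circ\Phi$ is a function of the other'' rather than ``$DT$ has rank $\le 1$,'' which is the same observation. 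The only point neither account spells out is the case where $T(\Phi(x))$ is a vertex of a polygonal $\Delta$ (where $\partial\Delta$ is not locally a graph), but there a short argument shows $DT$ is actually zero, so your rank argument still closes the case.
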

\begin{proof}
	Statement (1) follows from the fact that $\vert DT \vert\le 1+\eps$, and the formula in the proof of Claim \ref{clm:symp_invariance}. Statement (2) follows from the fact that $\partial \Delta$ is one dimensional, so locally around $x$, the function $\left(T\circ\Phi\right)_1$ is a function of $\left(T\circ\Phi\right)_2$ or vice-versa, hence the Poisson bracket vanishes.
\end{proof}

The following corollary will be useful when dealing with $\Pb^{}_X(\alpha)$:
\begin{cor}\label{cor:pb_bound_retract}
	Let $X$ be a compact subset of a symplectic manifold $M$. Let $\Phi\colon M \to \overline{B_1}$ be a function such that $\Phi(X)\subset \partial(\overline{B_1})=S^1$. Let $\eps>0$  and $K>0$ be positive numbers such that on $\Phi^{-1}\left(B_{1-\eps}\right)$ we have a bound $\left\Vert \left\{ \Phi_1,\Phi_2 \right\} \right\Vert \le K$. Then there exists $\Psi\colon M \to \overline{B_1}$, such that:
	\begin{itemize}
		\item $\Psi\vert^{}_X \equiv \Phi\vert^{}_X$.
		\item $\left\Vert\left\{ \Psi_1, \Psi_2 \right\}\right\Vert \le \frac {1+\eps}{1-\eps}K$.
	\end{itemize}
\end{cor}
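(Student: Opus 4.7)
The plan is to obtain $\Psi$ as a post-composition $T\circ\Phi'$, where $\Phi'$ is a rescaling of $\Phi$ and $T$ is a suitable pseudoretract, so that Proposition~\ref{prop:pseudoretract_properties} can be applied both to bound the bracket and to kill it where we lack control.

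First I would set $\Phi' := \Phi/(1-\eps)$, so that $\Phi'$ sends $X$ onto the circle of radius $1/(1-\eps)$ and, on $(\Phi')^{-1}(\overline{B_1}) = \Phi^{-1}(\overline{B_{1-\eps}})$, one has $\|\{\Phi'_1,\Phi'_2\}\| \le K/(1-\eps)^2$ by the quadratic scaling of the Poisson bracket. Next I would construct a smooth map $T\colon \mathbb{R}^2 \to \overline{B_1}$ of the radial form $T(r,\theta)=(\sigma(r),\theta)$ in polar coordinates, where $\sigma\colon[0,\infty)\to[0,1]$ is smooth, nondecreasing, with $\sigma(0)=0$ (so that $T$ is smooth at the origin), $\sigma(r)=1$ for all $r\ge 1$, and the Jacobian $|\det DT|=\sigma\sigma'/r$ carefully bounded on its transition interval so that $T$ is an $\eps'$-pseudoretract onto $\overline{B_1}$ in the sense of the preceding definition. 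The crucial feature is that on $\{|y|\ge 1\}$ the map $T$ coincides with the radial projection $T(y)=y/|y|$, so that for $x\in X$, where $|\Phi'(x)|=1/(1-\eps)\ge 1$, one gets $T(\Phi'(x))=\Phi'(x)/|\Phi'(x)|=\Phi(x)$.

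Setting $\Psi := T\circ\Phi'$, the condition $\Psi|_X\equiv\Phi|_X$ follows immediately from the previous observation. By part (2) of Proposition~\ref{prop:pseudoretract_properties}, $\{\Psi_1,\Psi_2\}\equiv 0$ off $(\Phi')^{-1}(\overline{B_1})=\Phi^{-1}(\overline{B_{1-\eps}})$. On $\Phi^{-1}(\overline{B_{1-\eps}})$ the pointwise identity appearing in the proof of part (1), namely $\{T_1\circ\Phi',T_2\circ\Phi'\}=(\det DT\circ\Phi')\cdot\{\Phi'_1,\Phi'_2\}$, combined with the Jacobian bound on $T$ and the scaled bracket bound on $\Phi'$, yields the desired upper estimate on $\|\{\Psi_1,\Psi_2\}\|$.

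The main technical step will be the quantitative calibration of $\sigma$ on its transition interval so that the pseudoretract Jacobian factor, multiplied by the scaling factor $1/(1-\eps)^2$, collapses to the stated bound $\frac{1+\eps}{1-\eps}$. This is a purely one-dimensional problem about shaping $\sigma\sigma'/r$, and it exploits the extra freedom one has in the region $|y|\in(1-\eps,1]$ lying outside the controlled preimage; in particular, the pseudoretract rank-zero mechanism on this outer annulus absorbs all contributions from the uncontrolled part of the bracket.
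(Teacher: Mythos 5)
Your overall strategy --- post-compose $\Phi$ with a rescaling and a radial pseudoretract, use part~(2) of Proposition~\ref{prop:pseudoretract_properties} to kill the bracket where $\Phi$ escapes the controlled ball, and part~(1) to bound it inside --- is exactly the mechanism the paper uses, so this is the same proof up to conjugating the pseudoretract by the homothety.

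However, the last step contains a genuine gap. Your scaling $\Phi' = \Phi/(1-\eps)$ multiplies the Poisson bracket by $\frac{1}{(1-\eps)^2}$, and you then claim that a careful choice of the profile $\sigma$ can make the pseudoretract Jacobian factor small enough that the product drops to $\frac{1+\eps}{1-\eps}$. That is impossible. Your map $T\colon \mathbb{R}^2\to\overline{B_1}$ equals the radial projection outside $\overline{B_1}$, so $\lvert DT\rvert$ is supported in $\overline{B_1}$, and since $T$ is a degree-one surjection onto $\overline{B_1}$, the change-of-variables identity gives $\int_{\overline{B_1}}\lvert DT\rvert\,dA = \operatorname{Area}(\overline{B_1})$, forcing $\sup\lvert DT\rvert \ge 1$. (In fact the support of $\lvert DT\rvert$ is a proper subset of $\overline{B_1}$, so the supremum is strictly greater than $1$.) Hence your composite bound is at best $\frac{1+\eps'}{(1-\eps)^2}K$ for some $\eps'>0$, and no choice of $\sigma$ makes $\frac{1+\eps'}{(1-\eps)^2}\le\frac{1+\eps}{1-\eps}$, because that inequality would require $1+\eps'\le(1+\eps)(1-\eps)=1-\eps^2<1$, contradicting $\eps'>0$. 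The honest output of your construction is the bound $\frac{1+\eps}{(1-\eps)^2}K$, not $\frac{1+\eps}{1-\eps}K$.

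For context: the paper's own one-line proof has a matching imprecision --- it retracts onto $\overline{B_{1-\eps}}$ and then applies a homothety with length-scaling $\frac{1}{\sqrt{1-\eps}}$, which multiplies the bracket by $\frac{1}{1-\eps}$ but lands in $\overline{B_{\sqrt{1-\eps}}}$ rather than $\overline{B_1}$, so as written it does not satisfy $\Psi\vert_X\equiv\Phi\vert_X$. The corollary is only ever invoked with $\eps\to 0$, so replacing $\frac{1+\eps}{1-\eps}$ by $\frac{1+\eps}{(1-\eps)^2}$ is harmless for the rest of the paper. The right move in your proposal would have been to flag this discrepancy in the constant rather than to promise a calibration of $\sigma$ that cannot exist.
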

\begin{proof}
	The proof is an immediate application of Proposition $\ref{prop:pseudoretract_properties}$, pick an $\eps$-pseudoretract, $T$, onto $B_{1-\eps}$, and consider $\widetilde{\Phi}:=\mathcal{H}_{\frac{1}{1-\eps}} \circ T \circ \Phi$, where $\mathcal{H}_{\frac{1}{1-\eps}}$ is the homothety by a factor of $\frac{1}{\sqrt{1-\eps}}$.
\end{proof}

\begin{prop}\label{prop:pseudoretractsExist}
	$\eps$-pseudorectracts onto $\Delta$ exist for any compact convex $\Delta$ with either a smooth boundary or a polygonal boundary.
\end{prop}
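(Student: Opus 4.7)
The plan is to construct $T$ explicitly as a smoothing of the nearest-point projection $\pi_{\Delta}$, which is $1$-Lipschitz on $\mathbb{R}^{2}$ by convexity of $\Delta$ but only continuous at $\partial\Delta$.

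For the smooth case, I would work in Fermi coordinates $(s,t)$ on a tubular neighborhood of $\partial\Delta$, where $s$ is arclength along $\partial\Delta$ and $t$ is the signed distance (positive outside $\Delta$). The area form is $(1-\kappa(s)t)\,ds\wedge dt$ with $\kappa$ the curvature. Fix a small $\delta>0$ and a smooth $h\colon\mathbb{R}\to\mathbb{R}_{\le 0}$ with $h(t)=t$ for $t\le-\delta$, $h(t)=0$ for $t\ge 0$, and $0\le h'(t)\le 1+\eps/2$. Such an $h$ exists: the constraint $\int_{-\delta}^{0}h'\,dt=\delta$ with the endpoint slopes $h'(-\delta)=1$ and $h'(0)=0$ forces $h'>1$ somewhere, but by localizing the dip of $h'$ from $1$ to $0$ to an arbitrarily short subinterval near $t=0$ one keeps the compensating excess below $\eps/2$. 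Define $T(s,t)=(s,h(t))$ on the tubular neighborhood, extended by the identity on the inner core of $\Delta$ and by $\pi_{\Delta}$ on the rest of the exterior; these glue smoothly because in Fermi coordinates $\pi_{\Delta}(s,t)=(s,0)$ for $t\ge 0$. A direct computation yields
\[
|DT|\;=\;h'(t)\cdot\frac{1-\kappa(s)h(t)}{1-\kappa(s)t}\;\le\;(1+\eps/2)(1+O(\delta)),
\]
which is at most $1+\eps$ once $\delta$ is small compared to $1/\max|\kappa|$.

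For polygonal $\Delta$, I would run a piecewise analogue. Decompose $\mathbb{R}^{2}\setminus\mathrm{int}(\Delta)$ into edge-strips (points whose nearest $\Delta$-neighbor lies on a given edge) and vertex-wedges (points whose nearest $\Delta$-neighbor is a given vertex). On each edge-strip, use tangent-normal coordinates to the edge and apply the same $(s,t)\mapsto(s,h(t))$ construction, now with $\kappa\equiv 0$. On each vertex-wedge, collapse the wedge toward its vertex $v$ via a radial cutoff tied to the same profile $h$. Glue smoothly across the rays separating edge-strips from adjacent vertex-wedges by thickening each ray into a narrow transition slab of width $\mu\ll\delta$ and blending the two formulas there via a smooth partition of unity, after modifying the wedge formula on the slab by a correction term matching the along-edge derivative of the edge-strip formula at the ray. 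Extend by the identity on the inner core of $\Delta$.

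The main obstacle is precisely this gluing in the polygonal case. Without the first-order correction, the edge-strip formula (which projects each along-edge coordinate to a distinct point of the edge) and the wedge formula (which sends the entire wedge to $v$) disagree to first order along the separating ray, so a naive partition-of-unity blend introduces an $O(1)$ contribution to $|DT|$ that could exceed $\eps$. Once the wedge formula is corrected to first (or higher) order to agree with the edge-strip derivative at the ray, the blend introduces only contributions of order $\mu$ that can be made arbitrarily small. With the corrections and the scale parameters $\mu,\delta$ chosen appropriately, the uniform bound $|DT|\le 1+\eps$ holds throughout; away from the transition slabs the Jacobian reduces to the same computation as in the smooth case with $\kappa=0$.
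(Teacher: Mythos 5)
Your smooth-boundary case is correct and is the same idea as the paper's in slightly different coordinates: you compress the normal direction with a profile $h$, the paper does the identical compression radially from an interior basepoint, and the resulting Jacobian formulas match up to the change of coordinates.

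For the polygonal case you take a genuinely different route, and it has a real gap. You want to build one global map by decomposing the exterior into edge-strips and vertex-wedges and then gluing along the separating rays with a partition of unity plus a ``first-order correction'' to the wedge formula. You correctly identify that the naive blend fails because the edge-strip map $(s,t)\mapsto(s,h(t))$ has nonzero along-edge derivative on the ray while any map collapsing the wedge to the vertex has none, so $(T_1-T_2)\otimes\nabla\phi$ is $O(1)$; but you do not actually construct the corrected wedge map, nor verify that it keeps $|DT|\le 1+\eps$, nor that the corrected pair really does agree to first order \emph{along the whole ray} so that $T_1-T_2=O(\mu^2)$ in the $\mu$-slab. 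There is also a second seam you don't mention: the collar $-\delta<t<0$ where $h(t)\ne t$ extends \emph{inside} $\Delta$, and near a vertex the collars of the two incident edges overlap and compress in different normal directions, so that interior interface needs its own matching argument. These are exactly the points where the construction is delicate, and the sketch leaves them open.

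The paper sidesteps all of this by not building a single map at all. For each edge $\ell_k$ it constructs a globally smooth map $T_k$ of the plane (a translation-invariant compression in Case 1, a radial compression from the intersection point of the neighboring edges in Cases 2 and 3) that retracts the outward half-plane onto the line $\widetilde{\ell_k}$, fixes $\Delta$, and has $|DT_k|\le 1+\eps$. The final pseudoretract is the composition $T_N\circ\cdots\circ T_1\circ S$, so the Jacobian bound multiplies, smoothness is automatic, and the corners are handled for free: the composition of the two half-plane retractions adjacent to a vertex sends the whole wedge at that vertex to the vertex, which is precisely the behavior recorded in Remark~\ref{rem:pseudoretract_property}. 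If you want to salvage the single-map approach you would need to carry out the two gluing arguments above explicitly; otherwise I would recommend switching to the compositional construction.
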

\begin{proof}
	For a set $\Delta$ such that $\partial\Delta$ is smooth, WLOG assume $(0,0)\in \operatorname{Int} \Delta$ and parametrize $\partial\Delta$ in polar coordinates by $r^{}_{\partial\Delta}(\theta)e^{i\theta}$ where  $r^{}_{\partial\Delta} \colon [0,2\pi] \to (0,\infty)$ is the radial coordinate of the boundary.
	Let $\rho\colon [0,\infty) \to [0,\infty)$ be a function such that $\rho(x)\vert^{}_{[0,1/2]} = x$, $\rho\vert^{}_{[1,\infty)} = 1$ and $0\le\rho^\prime \le 1+\eps$.
	Now define $T\colon \mathbb{R}^2 \to \mathbb{R}^2$ in polar coordinates by:
	\[ 
	T(re^{i\theta}) = r^{}_{\partial\Delta}\rho\left(\frac{r}{r^{}_{\partial\Delta}(\theta)}\right)e^{i\theta}.
	\]
	The proof continues similarly to the proof of Lemma 2.4 in \cite{BEP}, we cite the formula for the Jacobian appearing in \cite{BEP}:
	\[\frac {T^*\omega_
		{\mathbb {R}^2}}{\omega_{\mathbb {R}^2}} = \frac {\vert T(re^{i\theta})\vert }{r}\frac{\partial }{\partial r}\vert T(re^{i\theta}) \vert.\]
	Where $\vert\cdot\vert$ is the distance to the origin. It follows that $T$ is the desired pseudoretract if $\eps$ is small enough.
	For sets, $\Delta$, whose boundary is a triangle an argument appears in \cite{BEP}. A similar argument works for every polygon and we describe it briefly.
	Let $\Delta$ be a convex polygon with $N$ edges denoted by $\ell_1,\ldots,\ell_N$. For every $1\le k \le N$ we construct a function $T_k$ as follows:
	Denote by $\ellp$ and $\ellpp$ the edges adjacent to $\ell_k$, such that they are oriented counterclockwise as $\ellp,\ell_k,\ellpp$,
	Denote by $\widetilde{\ell_k}$ the line in the plane on which $\ell_k$ lies, and denote by $H_k$ the half plane containing $\Delta$ whose boundary is $\widetilde{\ell_k}$.
	We now define $T_k$ by cases:
	\begin{description}
		\item [Case 1: $\ellp \parallel \ellpp$]
		\quad\\
		Let $u,v$ be unit vectors such that $u \parallel \ell_k$ and $v \parallel \ellpp$ where both vectors are oriented by the same orientations of $\ell_k$ and $\ellpp$ that are induced by the counterclockwise orientation of $\partial \Delta$.
		Let $(x,y)$ denote coordinates in $\mathbb{R}^2$ with respect to the basis $\left\{u,v\right\}$ and such that $(0,0) = \ell_k \cap \ellpp$.
		Let $\rho\colon (-\infty,\infty) \to [0,\infty)$ be a function such that $\rho(x)\vert^{}_{[\eps,\infty]} = x$, $\rho\vert^{}_{(-\infty,0]} = 0$ and $0\le\rho^\prime \le 1+\eps$.
		Define 
		\[T_k(x,y) := (\rho(x),y). \]
		Thus $T_k$ maps $H_k$ to $\partial H_k$ and $T_k(\Delta) = \Delta$ if $\eps$ is small enough.
		
		\item [Case 2: The continuations of $\ellp$ and $\ellpp$ intersect at a point $p \in H_k$.]
		\quad\\
		Pick polar coordinates $(r,\theta)$ such that $p$ is the center of the coordinate system. 
		Let $J_k = \left\{\theta \,\middle|\,\; \exists r \text{ such that } re^{i\theta}\in \widetilde{\ell_k}\right\}$ 
		and for every $\theta\in J_k$ denote by $R_k(\theta)$ the distance from $p$ to the intersection of
		$\widetilde{\ell_k}$ with the ray emanating from $p$ with angle $\theta$. Let $\rho\colon [0,\infty) \to [0,\infty)$ be a function such that $\rho(x)\vert^{}_{[0,1-\eps]} = x$, $\rho\vert^{}_{[1,\infty)} = 1$ and $0\le\rho^\prime \le 1+\eps$.
		Define
		\begin{align*}
		T_k(r,\theta):=
		\begin{cases*}
		R_k(\theta)\rho\left(\frac{r}{R_k(\theta)}\right)e^{i\theta} & 	$\theta \in J_k$ \\
		re^{i\theta} &   Otherwise.
		\end{cases*}		
		\end{align*} 
		Thus $T_k$ maps $H_k$ to $\partial H_k$ and $T_k(\Delta) = \Delta$ if $\eps$ is small enough.
		
		\item [Case 3: The continuations of $\ellp$ and $\ellpp$ intersect at a point $p \not\in H_k$.]
		\quad\\
		Again pick polar coordinates $(r,\theta)$ such that $p$ is at the center of the coordinate system. 
		Let $J_k = \left\{\theta \,\middle|\,\; \exists r \text{ such that } re^{i\theta}\in \widetilde{\ell_k}\right\}$ 
		and for every $\theta\in J_k$ denote by $R_k(\theta)$ the distance from $p$ to the intersection of
		$\widetilde{\ell_k}$ with the ray emanating from $p$ with angle $\theta$. Let $\rho\colon [0,\infty) \to [0,\infty)$ be a function such that $\rho(x)\vert^{}_{[1+\eps,\infty)} = x$, $\rho\vert^{}_{[0,1]} = 1$ and $0\le\rho^\prime \le 1+\eps$.
		Define:
		\[
		T_k(r,\theta):=
		R_k(\theta)\rho\left(\frac{r}{R_k(\theta)}\right)e^{i\theta}.
		\]
		In this case $T_k$ is defined only on the open half plane containing $\Delta$ whose boundary is the line parallel to $\widetilde{\ell_k}$ passing through $p$. Denote this half plane by $\widetilde{H_k}$. The map $T_k$ maps $\widetilde{H_k}$ to $\partial H_k$ and $T_k(\Delta) = \Delta$ if $\eps$ is small enough.
	\end{description}
	To get the desired pseudoretract one takes 
	\[
		T := T_N \circ \ldots T_1 \circ S.
	\]
	Where $S$ is a pseudorectact on a smooth convex body containing $\Delta$ such that all $T_k$ are defined on it.
	Choosing $\eps$ small enough yields the desired function.	
\end{proof}
\begin{rem}\label{rem:pseudoretract_property}
	The pseudoretract of $\mathbb{R}^2$ onto a polygon described above has the following property:
	Consider the angle opposite to the interior angle at a vertex $v$, that is, the angle formed at a vertex $v$ by continuation of the adjacent edges, and denote by $A_v$ the plane sector formed by it. Then there exists $\eps>0$ such that $B_\eps(v)\cap A_v$ is mapped to $\{v\}$ by the pseudoretract.
\end{rem}

\section{The invariants $\Pb_n$}	
\subsection{Definitions and Setup}
Let $(M,\omega)$ be a symplectic manifold and let $X_1,\ldots,X_N\subseteq M$ a collection of compact subsets intersecting cyclically. We let $\DDelta=(\Delta,p_1,\ldots,p_N)$ denote the following data:
\begin{enumerate}
	\item $\Delta$ is a closed compact convex subset of $\mathbb{R}^2$ of $\operatorname{Area}(\Delta)=1$ with $\partial\Delta$ either smooth or polygonal.
	\item $p_i\in\partial\Delta$ are marked points ordered cyclically counterclockwise.  
\end{enumerate}
We denote by $\gamma_i$ the arc along $\partial\Delta$ emanating from $p_i$ towards $p_{i+1}$ ($i+1$ is computed cyclically $\operatorname{mod} N$). To also incorporate non-compact symplectic manifolds, we define the following condition:
\begin{defn}
	We say that a function $\Phi\colon M \to \Delta$ satisfies the (CS)-condition if there exists $p=(p_1,p_2)\in \Delta$ such that $\Phi_1 - p_1\colon M \to \mathbb{R}$ and $\Phi_2 - p_2\colon M \to \mathbb{R}$ are both compactly supported. When $M$ is compact this is automatically satisfied for all $\Phi$.
\end{defn}
Put:
\[
	\mathcal{F}_{\DDelta,N}^\prime(X_1,\ldots,X_N) = \left\{\Phi\colon M\to\Delta \,\middle|\, \Phi \text{ is (CS) \& } \forall i,\; \exists \underset{\text{open}}{U_i}\supset X_i\; \text{such that}\; \Phi\left(U_i\right)\subseteq \gamma_i\right\}.
\]

\begin{defn}
	Define: \[
	\Pb_N^\DDelta(X_1,\ldots,X_N)=\underset{\Phi\in\mathcal{F}_{\DDelta,N}^\prime}{\inf}{\left\Vert\left\{\Phi_1,\Phi_2\right\}\right\Vert}.
	\]
	Where $\Phi_1,\Phi_2$ are the components of $\Phi\colon M\to \Delta \subset \mathbb{R}^2$, i.e. $\Phi$ is given by $\Phi(x)=\left(\Phi_1\left(x\right),\Phi_2\left(x\right)\right)$.
\end{defn}

Also denote: 
\[
	\mathcal{F}_{\DDelta,N}(X_1,\ldots,X_N) = \left\{\Phi\colon M\to\Delta \,\middle|\, \Phi \text{ is (CS) \& } \forall i,\; \Phi\left(X_i\right)\subseteq \gamma_i\right\}.
\]
\begin{rem}
	By a method of homotheties and what we call pseudoretracts, one has:
	\[
		\underset{\Phi\in\mathcal{F}_{\DDelta,N}^\prime}{\inf}{\left\Vert\left\{\Phi_1,\Phi_2\right\}\right\Vert} = \underset{\Phi\in\mathcal{F}_{\DDelta,N}}{\inf}{\left\Vert\left\{\Phi_1,\Phi_2\right\}\right\Vert}.
	\]
	See an analogous proof in \cite{BEP} for $\pb_3$, and Step 2 in the proof of Claim \ref{prop:pb_invariance}.
\end{rem}

\begin{rem}
	When some pieces of the data are clear from the context (for example the sets $(X_1,\ldots,X_N)$, the number of sets, etc') we  omit them from the notation of $\Pb$ or $\mathcal{F}$ in favor of a less cluttered notation.
\end{rem}

\begin{rem}
	In \cite{BEP} (Proposition 1.3) it is shown that the $\pb$-invariant can also be defined in terms of bounded functions:
	\[ \pb_3(X,Y,Z) = \inf\left\lbrace \left\Vert \left\lbrace  F,G \right\rbrace\right\Vert \,\middle|\,
	\begin{tabular}{@{}l@{}l@{}}
		 $F,G\in C^\infty_c(M)$, $F\ge0$, $G\ge 0$, $F+G\le 1$, \\
		 $\exists \underset{\text{open}}{U_X}\supset X$, $\underset{\text{open}}{U_Y}\supset Y$, $\underset{\text{open}}{U_Z}\supset Z$ such that\\
		 $F\vert^{}_{U_X}=0$, $G\vert^{}_{U_Y}=0$, $(F+G)\vert^{}_{U_Z}=1$
	\end{tabular}
	 \right\rbrace,\]
	\[ \pb_4(X_0,X_1,Y_0,Y_1) = \inf\left\lbrace \left\Vert \left\lbrace  F,G \right\rbrace\right\Vert \,\middle|\,
	\begin{tabular}{@{}l@{}l@{}}
	$F,G\in C^\infty_c(M)$, $0\le F\le 1$, $0\le G\le 1$ \\
	$\exists \underset{\text{open}}{U_{X_i}}\supset X_i$, $\underset{\text{open}}{U_{Y_i}}\supset X_i$ such that\\ 
	$F\vert^{}_{U_{X_0}}=0$, $F\vert^{}_{U_{X_1}}=1$, $G\vert^{}_{U_{Y_0}}=0$, $G\vert^{}_{U_{Y_1}}=1$
	\end{tabular}
	\right\rbrace.\] 
	As a consequence we get that by definition $\pb_4(X_0,X_1,Y_0,Y_1) = \Pb_4^\DDelta(X_0,Y_0,X_1,Y_1)$ for $\Delta$ being the square with side length $1$ and $p_i$ its vertices, and that $\pb_3(X,Y,Z)=\frac {1}{2}\Pb_3^\DDelta(X,Y,Z)$, where $\Delta$ is a right triangle with legs of length $\sqrt{2}$ and $p_i$ are its vertices. The emergence of the factor of $\nicefrac{1}{2}$ in the formula is due to us working with the normalization of mapping into domains of area $1$. Note that to deduce the above equalities for a non-compact $M$ one needs to use the methods of Proposition \ref{prop:pb_invariance} and of Theorem \ref{thm:pb3=pb4} to move the point $p\in \Delta$ witnessing the (CS) condition such that $p=p_1=(0,0)$ and only then the (CS) condition coincides with the requirement that the function $F$ and $G$ from $\pb_3$ or $\pb_4$ have a compact support.
\end{rem}
\begin{clm}\label{clm:symp_invariance}
	Let $S\colon \Delta_1 \to \Delta_2$ is a symplectomorphism where $\Delta_1,\Delta_2$ are domains in $\mathbb{R}^2$, and let $\Phi\colon M \to \Delta_1$ be a smooth map.\\ Denote by $\Psi:=S\circ\Phi\colon M \to \Delta_2$ their composition. Then $\left\{\Phi_1,\Phi_2\right\}=\left\{\Psi_1,\Psi_2\right\}$. 
\end{clm}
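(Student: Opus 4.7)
The plan is to reduce the claim to the identity
\[
\{\Psi_1,\Psi_2\} = |DS|\circ\Phi\cdot\{\Phi_1,\Phi_2\},
\]
where $|DS|$ is the Jacobian determinant of $S\colon\Delta_1\to\Delta_2$, and then observe that a symplectomorphism between planar domains equipped with the standard symplectic form $dy_1\wedge dy_2$ has Jacobian identically equal to $1$, so the two brackets agree pointwise.

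The main step is to derive the displayed identity. I would work locally on $M$ and unwind both sides using the chain rule. Write $\Psi_i = S_i\circ\Phi$, so at a point $x\in M$,
\[
d\Psi_i\vert^{}_x = \sum_{j=1}^{2}\left(\frac{\partial S_i}{\partial y_j}\circ\Phi(x)\right)d\Phi_j\vert^{}_x.
\]
Substituting into the bilinear expression for the Poisson bracket and using the antisymmetry together with $\{\Phi_j,\Phi_j\}=0$ collapses the four resulting terms into a single term multiplied by
\[
\frac{\partial S_1}{\partial y_1}\frac{\partial S_2}{\partial y_2}-\frac{\partial S_1}{\partial y_2}\frac{\partial S_2}{\partial y_1} = |DS|,
\]
evaluated at $\Phi(x)$. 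This is exactly the claimed formula. An equivalent derivation, perhaps cleaner, is to notice the general fact that for any smooth $\Phi=(\Phi_1,\Phi_2)\colon M\to\mathbb{R}^2$ the two-form $\Phi^*\omega_{\mathbb{R}^2}=d\Phi_1\wedge d\Phi_2$ encodes $\{\Phi_1,\Phi_2\}$ in the sense that the bracket is the unique function $h$ satisfying $d\Phi_1\wedge d\Phi_2\wedge\omega^{n-1} = h\,\omega^n/n$; then use $\Psi^*\omega_{\mathbb{R}^2} = \Phi^*S^*\omega_{\mathbb{R}^2}$ and $S^*\omega_{\mathbb{R}^2} = \omega_{\mathbb{R}^2}$ directly.

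Once the identity is in hand, the conclusion is immediate: since $S$ is a symplectomorphism between planar domains, it preserves the area form $dy_1\wedge dy_2$, hence $|DS|\equiv 1$ on $\Delta_1$, and therefore $\{\Psi_1,\Psi_2\} = \{\Phi_1,\Phi_2\}$ everywhere on $M$. There is no serious obstacle here; the only thing to be careful about is the bookkeeping in the chain-rule expansion and the sign coming from antisymmetry, which is precisely what produces the Jacobian determinant. As a byproduct, the formula $\{\Psi_1,\Psi_2\} = |DS|\circ\Phi\cdot\{\Phi_1,\Phi_2\}$ is exactly the formula invoked in the proof of Proposition~\ref{prop:pseudoretract_properties}(1), which is why the author refers forward to it.
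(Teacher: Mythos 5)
Your proof is correct, and your ``cleaner'' second derivation—using $\Psi^*\omega_{\mathbb{R}^2}=\Phi^*S^*\omega_{\mathbb{R}^2}=\Phi^*\omega_{\mathbb{R}^2}$ together with the formula expressing $\{\Phi_1,\Phi_2\}$ via $d\Phi_1\wedge d\Phi_2\wedge\omega^{n-1}$—is exactly the paper's argument. The chain-rule computation is a valid, more elementary route to the same Jacobian identity, but it proves nothing beyond what the pullback observation already gives.
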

\begin{proof}
	This follows from the description of Poisson bracket $\left\{\Phi_1,\Phi_2\right\}$ as 
	\[ \left\{\Phi_1,\Phi_2\right\} =  -n\frac{d\Phi_1\wedge d\Phi_2 \wedge\omega^{\wedge n-1}}{\omega^{\wedge n}}.\]
	and from $d\Phi_1\wedge d\Phi_2 = \Phi^*\omega_{\mathbb{R}^2}$.
\end{proof}

\subsection{Independence of $\DDelta$}
We denote by $B_\delt(p)$ the open ball in $\mathbb{R}^2$ of radius $\delta$ centered at $p$. When $p$ is the origin we omit it and just write $B_\delta$. We denote by $\overline{B_\delt(p)}$ the closed ball.
\begin{prop}\label{prop:pb_invariance}
	For all $\DDelta_1,\DDelta_2$ we have $\Pb_N^{\DDelta_1}(X_1,\ldots,X_N)=Pb_N^{\DDelta_2}(X_1,\ldots,X_N)$,
	that is, $\Pb_N^\DDelta$ neither depends on $\Delta$ nor on the points $p_i\in\partial\Delta$ as long as their cyclical order is preserved.
\end{prop}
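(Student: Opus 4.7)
The plan is to prove the asymmetric inequality $\Pb_N^{\DDelta_1}(X_1,\ldots,X_N) \le \Pb_N^{\DDelta_2}(X_1,\ldots,X_N)$; equality then follows by swapping the roles of $\DDelta_1$ and $\DDelta_2$. Given $\Phi \in \mathcal{F}_{\DDelta_2, N}^\prime$ and $\eps > 0$, I would construct $\Psi \in \mathcal{F}_{\DDelta_1, N}^\prime$ with $\|\{\Psi_1, \Psi_2\}\| \le (1+\eps)\|\{\Phi_1, \Phi_2\}\|$, and then let $\eps \to 0$. Two tools already available carry the argument: Claim \ref{clm:symp_invariance}, which says postcomposition with a symplectomorphism between planar domains leaves the Poisson bracket invariant, and Proposition \ref{prop:pseudoretract_properties}, which says postcomposition with an $\eps$-pseudoretract multiplies the supremum of the Poisson bracket by at most $(1+\eps)$.

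The construction proceeds in two stages. In the first, I use Proposition \ref{prop:pseudoretractsExist} together with Remark \ref{rem:pseudoretract_property} to pick smoothly bounded convex subdomains $\tilde\Delta_k \subseteq \Delta_k$ (for $k=1,2$) with $\operatorname{Area}(\tilde\Delta_1) = \operatorname{Area}(\tilde\Delta_2)$, obtained by slightly rounding the boundary of $\Delta_k$ inward near each marked point $p_j^{(k)}$, so that each arc $\tilde\gamma_i^{(k)}$ of $\partial\tilde\Delta_k$ lying between consecutive rounded-corner regions is a sub-arc of $\gamma_i^{(k)}$. Postcomposing $\Phi$ with an $\eps$-pseudoretract $T_2$ onto $\tilde\Delta_2$ yields $T_2 \circ \Phi$ mapping each $U_i$ into $\tilde\gamma_i^{(2)}$ with Poisson bracket bounded by $(1+\eps)\|\{\Phi_1, \Phi_2\}\|$. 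In the second stage, I construct an area-preserving diffeomorphism $S\colon \tilde\Delta_2 \to \tilde\Delta_1$ sending $\tilde\gamma_i^{(2)}$ to $\tilde\gamma_i^{(1)}$ cyclically: first choose any smooth orientation-preserving diffeomorphism $F\colon \tilde\Delta_2 \to \tilde\Delta_1$ matching the arcs (such $F$ exists because both are topological disks with boundaries decomposed into $N$ arcs in the same cyclic order), and then apply Moser's trick relative to the boundary to correct $F$ into an area-preserving map $S$. Setting $\Psi := S \circ T_2 \circ \Phi$, the image of each $U_i$ lies in $\tilde\gamma_i^{(1)} \subseteq \gamma_i^{(1)}$ so $\Psi \in \mathcal{F}_{\DDelta_1, N}^\prime$, and by Claim \ref{clm:symp_invariance} its Poisson bracket equals that of $T_2 \circ \Phi$, hence is at most $(1+\eps)\|\{\Phi_1, \Phi_2\}\|$.

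The main obstacle is the relative-boundary Moser step of the second stage: one must verify that the closed $2$-form $\omega_{\mathbb{R}^2} - F^*\omega_{\mathbb{R}^2}$ on $\tilde\Delta_2$ admits a primitive that vanishes on $\partial \tilde\Delta_2$, so that the resulting time-dependent vector field is compactly supported in the interior of $\tilde\Delta_2$ and the Moser isotopy keeps the boundary arc decomposition pointwise fixed throughout. This is precisely where the equality of areas $\operatorname{Area}(\tilde\Delta_1) = \operatorname{Area}(\tilde\Delta_2)$ enters: it forces the integral of $\omega_{\mathbb{R}^2} - F^*\omega_{\mathbb{R}^2}$ over $\tilde\Delta_2$ to vanish, so that, since $\tilde\Delta_2$ is a disk, a primitive can be chosen vanishing on the boundary. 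Once this technical point is verified, the remainder is bookkeeping of the single $(1+\eps)$ factor from the pseudoretract and then letting $\eps \to 0$.
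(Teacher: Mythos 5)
Your approach shares the paper's core tools (pseudoretracts, Claim \ref{clm:symp_invariance}, and an area-preserving diffeomorphism produced by a Moser-type argument), but the routing is genuinely different: you attempt a direct area-preserving diffeomorphism $\tilde\Delta_2\to\tilde\Delta_1$ matching arcs, whereas the paper passes through the unit disc and separately reconciles marked points on the disc via a compactly supported Hamiltonian isotopy built from the Weinstein neighborhood theorem. The relative Moser step that you flag as the crux is in fact sound: since $\operatorname{Area}(\tilde\Delta_1)=\operatorname{Area}(\tilde\Delta_2)$, the form $\omega_{\mathbb R^2}-F^*\omega_{\mathbb R^2}$ on the disk $\tilde\Delta_2$ has zero total integral, and by Stokes plus a collar correction one can choose a primitive that vanishes identically as a $1$-form (not merely after pullback) along $\partial\tilde\Delta_2$, so the Moser vector field is zero on the boundary and the isotopy fixes it pointwise. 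This is in effect the Dacorogna--Moser theorem the paper cites in its Step~2.

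The gap is elsewhere, at the two ends of your chain. First, the assertion that $T_2\circ\Phi$ sends each $U_i$ into the flat sub-arc $\tilde\gamma_i^{(2)}\subseteq\gamma_i^{(2)}$ is unjustified: a point of $\gamma_i^{(2)}$ close to $p_i^{(2)}$ lies outside $\tilde\Delta_2$, and the radial pseudoretract pushes it onto the rounded-corner portion of $\partial\tilde\Delta_2$, which is disjoint from $\tilde\gamma_i^{(2)}$. Redefining the target arcs to include half of each adjacent rounded corner repairs this, but only moves the difficulty downstream: after $S$, the image lies in arcs of $\partial\tilde\Delta_1$ whose rounded-corner portions sit in the interior of $\Delta_1$, not on $\gamma_i^{(1)}\subset\partial\Delta_1$, so $\Psi=S\circ T_2\circ\Phi$ is not in $\mathcal{F}_{\DDelta_1,N}^\prime$; moreover $\operatorname{Area}(\tilde\Delta_1)<1$, so $\tilde\Delta_1$ with your marked points is not even a valid datum $\DDelta$. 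A final homothety that inflates $\tilde\Delta_1$ back to $\Delta_1$ would close the argument when $\partial\Delta_1$ is smooth, but a smooth rounded domain cannot be rescaled into a polygonal $\Delta_1$ --- you need instead a final $\eps$-pseudoretract from a smooth domain containing $\Delta_1$ onto $\Delta_1$, with the intermediate domains and marked points arranged (using Remark \ref{rem:pseudoretract_property}) so that the arcs land where required. This corner obstruction is precisely why the paper's proof has both Step~1 (moving points on the smooth disc) and the ``pseudoretract back onto $\Delta_1$'' direction of Step~2; your proposal as written omits the analogue of that final return step.
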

\begin{proof}\quad\\
		\textbf{Step 1:} 
		Let $\DDelta, \DDelta^\prime$ denote the same domain $\Delta$ with \textbf{smooth} boundary, and different sets of points, $\{p_i\}$ and $\{p^\prime_i\}$, both ordered cyclically counterclockwise.
		$\partial\Delta$ is a Lagrangian submanifold in $\mathbb {R}^2$ and by Weinstein neighborhood theorem $\partial\Delta$ has a neighborhood $U$ symplectomorphic to a neighborhood, $V$, of the zero-section in $T^*\partial\Delta$, such that $\partial \Delta$ is identified with the zero section. Any vector field, $X$, along the zero section can be extended to a compactly supported Hamiltonian vector field in $V$, by setting in the canonical coordinates $(q,p)$, $H(q,p):=p(X(q))$ and then multiplying with a suitable cutoff function.
		Thus by extending an appropriate vector field along $\partial\Delta$ to a Hamiltonian vector field in $\mathbb{R}^2$ one gets a symplectomorphism, $S$, preserving $\Delta$ and mapping each point $p_i$ to $p^\prime_i$.  Hence, by Claim \ref{clm:symp_invariance} we deduce the independence of $\Pb$ on the points $p_i$ when $\Delta$ has smooth boundary, and we can omit them from the notation from now on.		
\\\\ \textbf{Step 2:}
		We compare $\Pb$ defined with respect to $\DDelta_1$ and $\Pb$ defined with respect to a unit disc $\overline{B_1}$.
		Let $\Phi\in \mathcal{F}_{\DDelta_1,N}^\prime$. Consider the deflated disc $\overline{B_{1-\eps}}$, by the  Dacorogna-Moser theorem \cite{DM} (The theorem essentially states the existence of a volume-form preserving map between domains of equal total volume) there exists a symplectomorphism, $S$, mapping it to a domain in the interior of $\Delta_1$. Let $T$ denote some smooth $\eps$-pseudoretract onto $S\left(\overline{B_{1-\eps}}\right)$.
		Let $\mathcal H_{\frac{1}{1-\eps}}$ be the homothety by a factor of $\frac {1}{\sqrt{1-\eps}}$ (Note that we chose to denote homotheties by their effect on areas, not on length). Define:
		\[
			\Psi:= \mathcal H_{\frac{1}{1-\eps}} \circ S^{-1}\circ T \circ \Phi.
		\]
		Then $\Psi \in \mathcal{F}_{\DDisc}$, where $\DDisc = \left(\overline{B_1},\left\lbrace\Psi(p_i)\right\rbrace\right)$, i.e the unit disc with the resulting configuration of points, $\{\Psi(p_i)\}$. We have: 
		\[
		\left\Vert\left\{\Psi_1,\Psi_2\right\}\right\Vert \le \frac {1+\eps}{1-\eps} \left\Vert\left\{\Phi_1,\Phi_2\right\}\right\Vert.
		\] Sending $\eps \to 0$ yields ${Pb_N^{\DDelta_1} \ge \Pb_N^{\DDisc}}$, in light of Step 1 which takes care of the points along $\partial \Delta$.
		
		Conversely, let $\Phi\in \mathcal{F}_{\DDisc}^\prime$, for $\DDisc=\left(\overline{B_1},\{p_i\}\right)$, with $\{p_i\}$ being arbitrary points along the boundary, ordered counterclockwise. Let $\mathcal H_{1+\eps}$ be the homothety by a factor of $\sqrt{1+\eps}$. Let $S$ be a symplectomorphism mapping $\overline{B_{1+\eps}}$ to a domain whose interior contains $\Delta_1$, and let $T$ be an $\eps$-pseudoretract of $\mathbb{R}^2$ onto $\Delta_1$.
		Define:
		\[
			\Psi:= T \circ S \circ \mathcal H_{1+\eps} \circ \Phi.
		\]
		Then $\Psi \in \mathcal{F}_{\DDelta_1}$ for $\DDelta_1 = (\Delta_1, \{\Psi(p_i)\})$ and \[\left\Vert\left\{\Psi_1,\Psi_2\right\} \right\Vert \le (1+\eps)^2 \left\Vert\left\{\Phi_1,\Phi_2\right\}\right\Vert.\]
		Sending $\eps \to 0$ yields ${Pb_N^{\DDelta_1} \le \Pb_N^{\DDisc}}$.
\\\\ \textbf{Step 3:}
		If $\Phi$ satisfies (CS) then also $\Psi$ does, since $\Psi$ is obtained from $\Phi$ by postcomposition.
\\\\ \textbf{Step 4:}
		By moving the points along the boundary of a disc as in Step 1 and combining with the pseudoretracts of Step 2, we deduce the independence of $\Pb$ of the points $p_i$ also when $\partial\Delta$ is a polygon.
\end{proof}
\begin{rem}
	Since $\Pb^\DDelta_N$ is independent of $\DDelta$, when we won't care about the details of the implementation we will suppress $\DDelta$ from the notation and just write $\Pb_N$.
\end{rem}
\subsection{Proof of Theorem \ref{thm:pb3=pb4}}
We recall the statement of the theorem:
\begin{thm}\label{thm:actualThm1}
	For $N\ge 4$, and $X_1,\ldots,X_N$ compact sets intersecting cyclically, it holds that:
	\[
		\Pb_N\left(X_1,\ldots,X_N\right)=\Pb_{N-1}\left(X_1,\ldots,X_{N-1}\cup X_N\right).
	\]
\end{thm}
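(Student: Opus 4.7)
The plan is to prove the two inequalities $\Pb_N\ge\Pb_{N-1}$ and $\Pb_N\le\Pb_{N-1}$ separately; by Proposition~\ref{prop:pb_invariance} we may work throughout with $\DDelta^\prime$ of our choosing, and in particular assume $\Delta^\prime$ has smooth boundary so that the arcs admit a smooth arclength parametrization. For the easy direction $\Pb_N\ge\Pb_{N-1}$, the argument is essentially tautological: given any $\Phi\in\mathcal{F}_{\DDelta,N}^\prime(X_1,\ldots,X_N)$, let $\DDelta^\prime$ be obtained from $\DDelta$ by deleting the marked point $p_N$; then $\gamma_i^\prime=\gamma_i$ for $i\le N-2$ while $\gamma_{N-1}^\prime=\gamma_{N-1}\cup\gamma_N$, and the union $U_{N-1}\cup U_N$ of the witnessing neighborhoods is an open neighborhood of $X_{N-1}\cup X_N$ whose image lies in $\gamma_{N-1}^\prime$, exhibiting the same $\Phi$ as an element of $\mathcal{F}_{\DDelta^\prime,N-1}^\prime(X_1,\ldots,X_{N-1}\cup X_N)$ with unchanged Poisson bracket.

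For $\Pb_N\le\Pb_{N-1}$, I would take any $\Phi\in\mathcal{F}_{\DDelta^\prime,N-1}^\prime(X_1,\ldots,X_{N-1}\cup X_N)$ and modify it only near $X_{N-1}\cup X_N$ to obtain $\widetilde{\Phi}\in\mathcal{F}_{\DDelta,N}^\prime(X_1,\ldots,X_N)$ with $\|\{\widetilde{\Phi}_1,\widetilde{\Phi}_2\}\|=\|\{\Phi_1,\Phi_2\}\|$. The key observation is that if $U\supset X_{N-1}\cup X_N$ is an open neighborhood witnessing $\Phi(U)\subset\gamma_{N-1}^\prime$, then $\Phi|_U$ factors through the one-dimensional arc $\gamma_{N-1}^\prime$ and therefore $\{\Phi_1,\Phi_2\}\equiv 0$ on $U$, by the same reasoning as part~(2) of Proposition~\ref{prop:pseudoretract_properties}. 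Shrinking $U$, we may assume $\overline{U}$ is disjoint from $X_1,\ldots,X_{N-2}$, which is possible thanks to cyclic intersection together with $N\ge 4$. Parametrize $\gamma_{N-1}^\prime$ by arclength as a smooth embedding $c\colon[0,L]\to\mathbb{R}^2$ and write $\Phi|_U=c\circ\phi$ for a smooth scalar $\phi\colon U\to[0,L]$. Fix $s_0\in(0,L)$; this will be the arclength parameter of the new marked point $p_N:=c(s_0)$.

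Choose $V\subset U$ open with $X_{N-1}\cup X_N\subset V$ and $\overline{V}\subset U$, and an open neighborhood $W$ of $X_{N-1}\cap X_N$ (possibly empty) with $\overline{W}\subset V$. Using bump functions, construct a smooth $\tilde\phi\colon U\to[0,L]$ that equals $\phi$ outside $V$, is identically $s_0$ on some open neighborhood of $X_{N-1}\cap X_N$ contained in $W$, is strictly less than $s_0$ on some open neighborhood of $X_{N-1}\setminus W$, and is strictly greater than $s_0$ on some open neighborhood of $X_N\setminus W$. These requirements are mutually compatible because the relevant sets are disjoint away from the intersection and agree with value $s_0$ on it, and the target $[0,L]$ is contractible. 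Set $\widetilde{\Phi}:=c\circ\tilde\phi$ on $U$ and $\widetilde{\Phi}:=\Phi$ on $M\setminus V$; the gluing is smooth because $\tilde\phi=\phi$ in a collar of $\partial V$. For the arc conditions, take $U_{N-1}$ to be the union of the neighborhood of $X_{N-1}\setminus W$ where $\tilde\phi<s_0$ with the open set where $\tilde\phi\equiv s_0$, so that $\widetilde{\Phi}(U_{N-1})\subset c([0,s_0])=\gamma_{N-1}$, and define $U_N$ symmetrically. Compact support of $\widetilde{\Phi}-\Phi$ preserves the (CS)-condition. For the bracket, outside $U$ we have $\widetilde{\Phi}=\Phi$, while inside $U$ the image of $\widetilde{\Phi}$ lies in $\gamma_{N-1}^\prime$, so $\{\widetilde{\Phi}_1,\widetilde{\Phi}_2\}\equiv 0$ there, just as it did for $\Phi$.

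The principal obstacle is the case $X_{N-1}\cap X_N\neq\emptyset$: the definition of $\mathcal{F}_{\DDelta,N}^\prime$ forces any admissible map to collapse the entire intersection onto the single point $p_N=\gamma_{N-1}\cap\gamma_N$, and the three-tier construction of $\tilde\phi$ (identically $s_0$ near the intersection, strictly on each side away from it, and equal to $\phi$ in a collar of $\partial V$) is designed precisely to realize this collapse while keeping the modification within the one-dimensional arc $\gamma_{N-1}^\prime$, so that no new Poisson bracket is generated. Once one checks that these requirements are compatible and carries out the standard partition-of-unity interpolation, passing to the infimum yields the desired equality.
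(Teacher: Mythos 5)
Your approach for the hard inequality $\Pb_N\le\Pb_{N-1}$ is genuinely different from the paper's, and in a nice way: the paper works with a specific square $\DDelta$ and runs a two-stage argument (a bump-function modification of $\Phi_1$ in Lemma~\ref{lem:pb3pb4_step1}, then a pseudoretract collapsing a middle segment in Lemma~\ref{lem:pb3pb4_step2}), incurring an $\eps$-loss at each stage. Your key observation --- that $\{\Phi_1,\Phi_2\}\equiv 0$ on the neighborhood $U$ of $X_{N-1}\cup X_N$ because $\Phi(U)$ lies in a one-dimensional arc, exactly as in Proposition~\ref{prop:pseudoretract_properties}(2) --- is correct and gives complete freedom to redefine $\Phi$ inside $U$ as long as the new map still factors through the arc, yielding the sharp bound $\|\{\widetilde{\Phi}_1,\widetilde{\Phi}_2\}\|\le\|\{\Phi_1,\Phi_2\}\|$ with no $\eps$ at all. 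That is a cleaner mechanism than the paper's.

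There is, however, a genuine gap. The claim ``Shrinking $U$, we may assume $\overline{U}$ is disjoint from $X_1,\ldots,X_{N-2}$, which is possible thanks to cyclic intersection together with $N\ge 4$'' is false. Cyclic intersection only forbids non-adjacent intersections; the adjacent pairs $X_N\cap X_1$ and $X_{N-1}\cap X_{N-2}$ are allowed to be nonempty. Since any admissible $U$ must contain $X_{N-1}\cup X_N$, it necessarily meets $X_1$ whenever $X_1\cap X_N\ne\emptyset$, and meets $X_{N-2}$ whenever $X_{N-2}\cap X_{N-1}\ne\emptyset$. In that case, redefining $\Phi$ on $V$ can destroy the arc conditions for $X_1$ and $X_{N-2}$: nothing in your construction of $\tilde\phi$ keeps $\widetilde{\Phi}$ mapping a neighborhood of $X_1$ into $\gamma_1$, so admissibility of $\widetilde{\Phi}$ is not established. (Incidentally, for the same reason the set $U_{N-1}$ you exhibit --- the union of the ``$<s_0$'' neighborhood of $X_{N-1}\setminus W$ and the ``$\equiv s_0$'' neighborhood of $X_{N-1}\cap X_N$ --- need not cover all of $X_{N-1}$; you should ask for $\tilde\phi\le s_0$ on a full neighborhood of $X_{N-1}$, not merely of $X_{N-1}\setminus W$.)

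The gap is fixable within your framework. On $U_1\cap U$ the original $\Phi$ must take the constant value $p_1=\gamma_1\cap\gamma_{N-1}^\prime$, so $\phi\equiv L$ on a neighborhood of $X_1\cap X_N$; symmetrically $\phi\equiv 0$ on a neighborhood of $X_{N-2}\cap X_{N-1}$. Adding to your list of requirements that $\tilde\phi=\phi$ (hence $\equiv L$, resp.\ $\equiv 0$) on neighborhoods of $X_1\cap X_N$ and $X_{N-2}\cap X_{N-1}$ is compatible with the other constraints (one has $L\ge s_0$ where $\tilde\phi\ge s_0$ is needed and $0\le s_0$ where $\tilde\phi\le s_0$ is needed, and for $N\ge4$ the sets $X_1\cap X_N$ and $X_{N-2}\cap X_{N-1}$ are disjoint from each other and from the rest of the intersections), and it ensures $\widetilde{\Phi}=\Phi$ on a neighborhood of $X_1\cup X_{N-2}$, restoring the arc conditions. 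With this repair your argument is sound and actually yields a cleaner proof than the one in the paper.
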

\begin{proof}
	The inequality $\Pb_N\left(X_1,\ldots,X_N\right)\ge\Pb_{N-1}\left(X_1,\ldots,X_{N-1}\cup X_N\right)$ follows from forgetting the $n$-th point, that is, any function $\Phi\in\mathcal{F}_{\DDelta,N}^\prime(X_1,\ldots,X_N)$ automatically belongs to  $\mathcal{F}_{\DDelta,N-1}^\prime(X_1,\ldots,X_{N-1}\cup X_N)$ by definition.
	We now focus on proving the inequality $\Pb_N\left(X_1,\ldots,X_N\right)\le\Pb_{N-1}\left(X_1,\ldots,X_{N-1}\cup X_N\right)$.
	We will use the following datum, $\DDelta$, in $\mathcal{F}_{\DDelta,N-1}^\prime(X_1,\ldots,X_{N-1}\cup X_N)$: 
	\begin{itemize}
		\item $\Delta$ is the square $[0,1]\times[0,1]\subset \mathbb{R}^2$.
		\item $p^{}_1 = (0,1)$, $p^{}_{N-1}=(1,1)$, $p^{}_2,\ldots,p^{}_{N-2}\in [0,1]\times \lbrace0\rbrace$.
	\end{itemize}
	Recalling our notation, $\gamma_i$ is the arc along $\partial \Delta$ between $p^{}_i$ and $p^{}_{i+1}$ oriented counterclockwise.
	The proof will follow from a couple of lemmata.
	\begin{lem}\label{lem:pb3pb4_step1}
		Let $\Phi \in \mathcal{F}_{\DDelta,N-1}^\prime(X_1,\ldots,X_{N-1}\cup X_N)$, then for all $\eps>0$ there exists $\widetilde{\Phi}\colon M \to \Delta$ with the following properties:
		\begin{itemize}
			\item $\forall \; 1\le k \le N-2$, $\;\widetilde{\Phi}(X_k)\subseteq \gamma^{}_k.$
			\item There exists $0<\delta=\delta(\eps,\Phi)$ such that
					\begin{align}
					\label{eqn:good_interval1}\widetilde{\Phi}(X_{N-1})\subseteq \gamma^{}_{N-1}\setminus B_{\delt}(p^{}_1), \\
					\label{eqn:good_interval2}\widetilde{\Phi}(X_{N})\subseteq \gamma^{}_{N-1}\setminus B_{\delt}(p^{}_{N-1}).
					\end{align}
			\item $\left\Vert \left\{\widetilde{\Phi}_1,\widetilde{\Phi}_2\right\} \right\Vert \le 
			\left\Vert \left\{\Phi_1, \Phi_2\right\} \right\Vert + \eps$.
		\end{itemize}
	\textnormal{That is, we "push away" the unwanted image of $X^{}_{N_1}$ from a neighborhood of $p^{}_1$, and similarly the unwanted image $X^{}_N$ from a neighborhood of $p^{}_{N-1}$. We do so without increasing the norm of the Poisson bracket too much.}
	\end{lem} 
	
	\begin{proof}
	Let $\eps > 0$.
	We consider the following sets where we would like to alter the values of $\Phi$. Let $\eps_0 < \frac 1 3$. Set:
	\begin{align*}
		\VI &:= \Phi^{-1}\left(B_{\eps_0/2}(p^{}_1)\right)\cap X^{}_{N-1}, \\
		\VNminI &:= \Phi^{-1}\left(B_{\eps_0/2}(p^{}_{N-1})\right)\cap X^{}_{N}.
	\end{align*}
	The notation is chosen to help the reader remember both that $\Phi\left( \VI \right) \subseteq B_{\eps_0/2}(p^{}_1)$ and that $\VI \subseteq X^{}_{N-1}$, and similarly for $\VNminI$.
	\begin{figure}[h]
		\centering
		\includegraphics[scale=0.52]{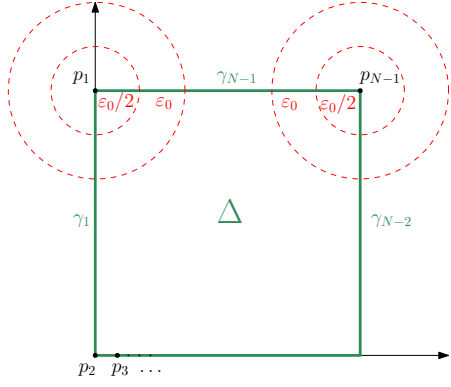}
		\caption{The set $\Delta$ and the balls around the vertices.}
		\label{fig1}
	\end{figure}
	Since $X_1$ and $X_{N-1}$ are closed and disjoint, there exist open sets $\UIp, \UIpp \subset M$ such that:
	\begin{itemize}
		\item $\VI\subseteq \UIp \subset \overline{\UIp} \subset \UIpp$.
		\item There exist an open neighborhood $\Op\left(X_1\right)$ such that ${\overline{\UIpp} \cap \Op\left(X_1\right) = \emptyset}$.
		\item $\Phi\left(\UIpp\right) \subseteq  B_{\eps_0}(p^{}_1)$.
	\end{itemize}
	Fix a smooth cut-off function, $\rho^{}_1\colon M \to [0,1]$, such that $\rho^{}_1|_{\UIp}\equiv 1$ and $\rho^{}_1|_{{\left(\UIpp\right)}^c}\equiv 0$.
	
	Similarly, Since $X_{N},X_{N-2}$ are closed and disjoint, there exist open sets $\UNminIp, \UNminIpp \subset M$ such that:
	\begin{itemize}
		\item $V_{N-1}\subseteq \UNminIp \subset \overline{\UNminIp} \subset \UNminIpp$.
		\item There exist an open neighborhood $\Op\left(X_{N-2}\right)$ such that ${\overline{\UNminIpp} \cap \Op\left(X_{N-2}\right) = \emptyset}$.
		\item $\Phi\left(\UNminIpp\right) \subseteq  B_{\eps_0}(p^{}_{N-1})$.
	\end{itemize}
	Fix a smooth cut-off function, $\rho^{}_{N-1}\colon M \to [0,1]$, such that $\rho^{}_{N-1}|_{\UNminIp}\equiv 1$ and $\rho^{}_{N-1}|_{{\left(\UNminIpp\right)}^c}\equiv 0$.
	
	For any $\delta$ such that $0<\delta<\frac{\eps_0}{2}$ consider $\widetilde{\Phi}_\delta\colon M \to \Delta$ defined by:
	\[
		\widetilde{\Phi}_\delta(x):= \left(\Phi_1(x) + \delt\rho^{}_1(x) - \delt\rho^{}_{N-1}(x),\Phi_2(x)\right).
	\]
	Let us verify the desired properties of $\widetilde{\Phi}_\delta$.

	\begin{clm}\label{clm:1stProperty}
		$\widetilde{\Phi}_\delta\left(X^{}_{N-1}\right) \subseteq B_{\delta}\left(p_1\right)^{c}$ and  $\widetilde{\Phi}_\delta\left(X^{}_{N}\right) \subseteq B_{\delta}(p^{}_{N-1})^{c}$.
	\end{clm}
	\begin{proof}
		We verify the first inclusion as the second is analogous.
		Let $x\in X^{}_{N-1}$. We denote by $\Phi(x) = (a,b)\in\mathbb{R}^2$ its image under $\Phi$.
		Since $\Phi\in \mathcal{F}_{\DDelta,N-1}^\prime(X_1,\ldots,X_{N-1}\cup X_N)$, by definition we have $\Phi\left(X^{}_{N-1}\right) \subseteq \gamma^{}_{N-1}$, so $b=1$ and $0\le a \le 1$. Now let us analyze $\widetilde{\Phi}_\delta(x)$ by cases:
		\[\widetilde{\Phi}_\delta(x) = (a + \delt\rho^{}_1(x) - \delt\rho^{}_{N-1}(x), 1),\]
		\begin{itemize}
		\item If $a \ge \eps_0$ then $a + \delt\rho^{}_1(x) - \delt\rho^{}_{N-1}(x) \ge \eps_0 + 0 - \delta \ge \eps_0 - \eps_0 /2 = \eps_0 /2 > \delta$.
		\item If $a < \eps_0$ then $\Phi(x)\in B_{\eps_0}(p_1)$. Now since $\Phi(\UNminIpp)\subseteq B_{\eps_0}(p^{}_{N-1})$ and since $B_{\eps_0}(p_1) \cap B_{\eps_0}(p^{}_{N-1}) = \emptyset$, it follows that $x\in {U_{N-1}^{\prime\prime}}^{c}$. Recalling that $\rho^{}_{N-1}\vert^{}_{\left({\UNminIpp}\right)^{c}} \equiv 0$ we obtain:
		\[a + \delt\rho^{}_1(x) - \delt\rho^{}_{N-1}(x) = a + \delt\rho^{}_1(x),\]
		and again we argue case by case:
			\begin{itemize}
			\item
			If $\eps_0/2 \le a < \eps_0$ then $a + \delt\rho^{}_1(x) \ge a \ge \eps_0/2 > \delta$.
			\item
			If $a < \eps_0/2$ then $x\in \VI$, therefore $\rho^{}_{1}(x)=1$, thus $a + \delt\rho^{}_1(x) = a+\delta \ge \delta$
			\end{itemize}
		\end{itemize}
		In either case $\widetilde{\Phi}_\delta(x) = (\alpha, 1)\in\mathbb{R}^2$ for some $\alpha\ge \delta$, therefore $\widetilde{\Phi}_\delta(x) \in B_{\delta}\left(p_1\right)^{c}$, hence 	$\widetilde{\Phi}_\delta\left(X^{}_{N-1}\right) \subseteq B_{\delta}\left(p_1\right)^{c}$ completing the proof.
	\end{proof}
	
	\begin{clm}\label{clm:2ndProperty}
			$\widetilde{\Phi}_\delta(X_{N-1}\cup X_{N}) \subseteq \gamma^{}_{N-1}$.
	\end{clm}
	\begin{proof}
		We have to check that $\widetilde{\Phi}_\delta(X_{N-1}\cup X_{N})$ does not contain points lying to the left of $p_1$ or to the right of $p^{}_{N-1}$. We check for $X_{N-1}$ with respect to $p^{}_{N-1}$ as the argument for $x\in X^{}_N$ is analogous. Let $x\in X^{}_{N-1}$. Keeping the notations from the proof of Claim \ref{clm:1stProperty} we have 
			\[\widetilde{\Phi}_\delta(x) = (a + \delt\rho^{}_1(x) - \delt\rho^{}_{N-1}(x), 1).\]
		The argument divides according to the value of $a$.
		\begin{itemize}
			\item If $0\le a \le 1-\eps_0$ then since $\delta < \frac {\eps_0}{2}$, we have:
			\[a + \delt\rho^{}_1(x) - \delt\rho^{}_{N-1}(x) \le 1-\eps_0 + \delta - 0 \le 1 - \eps_0 + \eps_0/2 < 1\]
			\item If $1-\eps_0 < a \le 1$ then $\widetilde{\Phi}_\delta(x)\in B_{\eps_0}(p_{N-1})$. Now since $B_{\eps_0}(p_1) \cap B_{\eps_0}(p^{}_{N-1}) = \emptyset$, and since $\Phi(U^{\prime\prime}_{1})\subset B_{\eps_0}(p^{}_{1})$, it follows that $x\in {U_{1}^{\prime\prime}}^{c}$.
			Recalling that $\rho^{}_{1}\vert^{}_{{U^{\prime\prime}_{1}}^{c}} \equiv 0$ we have:
			\[a + \delt\rho^{}_1(x) - \delt\rho^{}_{N-1}(x) = a - \delt\rho^{}_{N-1}(x) \le 1.\]
		\end{itemize}
		We have shown that $\widetilde{\Phi}_\delta(x) = (\alpha,1)$ where $0\le \alpha \le 1$ thus $x\in \gamma^{}_{N-1}$.	
	\end{proof}
	Combining Claims \ref{clm:1stProperty} and \ref{clm:2ndProperty} we deduce that for all $\delta < \frac {\eps_0}{2}$ Equations (\ref{eqn:good_interval1}) \& (\ref{eqn:good_interval2}) hold.	
	Next we validate:
	\begin{clm}
		\begin{align*}
			\widetilde{\Phi}_\delta(X_1)&=\Phi(X_1) \subseteq \gamma_1, \\
			\widetilde{\Phi}_\delta(X_{N-2})&=\Phi(X_{N-2}) \subseteq \gamma^{}_{N-2}.
		\end{align*}
	\end{clm}
	\begin{proof}
		We verify the claim for $X_1$ as the verification for $X_{N-2}$ is analogous. The claim will follow from $\rho^{}_1|_{\left({\UIpp}\right)^c}\equiv 0$ and $\rho^{}_{N-1}|_{\left({\UNminIpp}\right)^c}\equiv 0$.
		Let $x\in X_1$. Since 
		\[
			\Phi(X_1) \cap B_{\eps_0}(p^{}_{N-1}) \subset \gamma^{}_1 \cap B_{\eps_0}(p^{}_{N-1}) = \emptyset,
		\]
		and since $\Phi(\UNminIpp) \subseteq B_{\eps_0}(p^{}_{N-1})$ we have $X_1 \subseteq \left(\UNminIpp\right)^{c}$, hence ${\rho^{}_{N-1}|_{X_1}\equiv 0}$.
		Now, by definition of $\UIpp$, it satisfies $\overline{\UIpp} \cap X_1 = \emptyset$, so $X_1\subseteq \left({\UIpp}\right)^{c}$, hence also $\rho^{}_1|_{X_1}\equiv 0$.
		Therefore:
		\begin{align*}
			\widetilde{\Phi}_\delta(x)&= \left(\Phi_1(x) + \delt\rho^{}_1(x) - \delt\rho^{}_{N-1}(x),\Phi_2(x)\right) = \\
			&= \left(\Phi_1(x) + 0 - 0,\Phi_2(x)\right) = \left(\Phi_1(x),\Phi_2(x)\right) = \Phi(x).
		\end{align*}	
	\end{proof}
	Last, we verify the following:
	\begin{clm}
		\begin{align*}
		\widetilde{\Phi}_\delta(X_2)&=\Phi(X_2) \subseteq \gamma_2, \\
		\vdots \\
		\widetilde{\Phi}_\delta(X_{N-3})&=\Phi(X_{N-3}) \subseteq \gamma^{}_{N-3}.
		\end{align*}
	\end{clm}
	\begin{proof}
		Let $2\le k \le N-3$ and let $x\in X_k$.
		Since $\Phi\in \mathcal{F}_{\DDelta,N-1}^\prime(X_1,\ldots,X_{N-1}\cup X_N)$, by definition we have $\Phi(x)\in \gamma^{}_k \subseteq [0,1]\times \{0\}$. The segment $[0,1]\times \{0\}$ is disjoint from the union of balls $B_{\eps_0}(p^{}_1)\cup B_{\eps_0}(p^{}_{N-1})$ and since $\Phi\left(\UIpp\right) \subseteq  B_{\eps_0}(p^{}_1)$ and $\Phi\left(\UNminIpp\right) \subseteq  B_{\eps_0}(p^{}_{N-1})$, we deduce that $x\in \left(\UIpp\right)^{c} \cap \left(\UNminIpp\right)^{c}$. Recalling that $\rho^{}_1|_{\left({\UIpp}\right)^c}\equiv 0$ and $\rho^{}_{N-1}|_{\left({\UNminIpp}\right)^c}\equiv 0$ we compute:
		\begin{align*}
		\widetilde{\Phi}_\delta(x)&= \left(\Phi_1(x) + \delt\rho^{}_1(x) - \delt\rho^{}_{N-1}(x),\Phi_2(x)\right) = \\
		&= \left(\Phi_1(x) + 0 - 0,\Phi_2(x)\right) = \left(\Phi_1(x),\Phi_2(x)\right) = \Phi(x).
		\end{align*}
	\end{proof}
	To conclude the proof we compute $\left\Vert \left\{\widetilde{\Phi}_{\delta,1},\widetilde{\Phi}_{\delta,2}\right\} \right\Vert$:
	\begin{align*}
		\left\Vert \left\{\widetilde{\Phi}_{\delta,1},\widetilde{\Phi}_{\delta,2}\right\} \right\Vert &=
		\left\Vert \left\{\Phi_1 + \delt\rho^{}_1(x) - \delt\rho^{}_{N-1}(x),\Phi_2\right\} \right\Vert \le \\
		&\le \left\Vert \left\{\Phi_1,\Phi_2\right\} \right\Vert + \delt\left\Vert \left\{\rho^{}_1,\Phi_2\right\} \right\Vert + \delt\left\Vert \left\{\rho^{}_{N-1},\Phi_2\right\} \right\Vert \overset{\delt\to 0}{\longrightarrow} \left\Vert \left\{\Phi_1,\Phi_2\right\} \right\Vert.
	\end{align*}
	
	Thus the lemma is proven by picking $\widetilde{\Phi} := \widetilde{\Phi}_{\delta_0}$ for
	\[\delta_0 < \min\left\lbrace\frac{\eps_0}{2}, \frac{\eps}{2}{\left(\left\Vert \left\{\rho^{}_1,\Phi_2\right\} \right\Vert + \left\Vert \left\{\rho^{}_{N-1},\Phi_2\right\} \right\Vert\right)}^{-1}\right\rbrace.\]
	\end{proof}
	Next we prove:
	\begin{lem}\label{lem:pb3pb4_step2}
		Let $\widetilde{\Phi} := \widetilde{\Phi}_{\delta_0}$ obtained from Lemma \ref{lem:pb3pb4_step1}, then for all $\eps>0$ there exists $\widehat{\Phi}\colon M \to \Delta$ with the following properties:
		\begin{itemize}
			\item There exist $N$ points on $\partial \Delta$, denoted $p^\prime_1,\ldots,p^\prime_{N}$, defining arcs, $\gamma^\prime_1,\ldots, \gamma^\prime_N$, such that for all $1\le k \le N$, $\widehat{\Phi}(X_k)\subseteq \gamma^{}_k$.
			\item $\left\Vert \left\{\widehat{\Phi}_1,\widehat{\Phi}_2\right\} \right\Vert \le \frac{1+\eps}{1-\eps}\left\Vert\left\{\widetilde{\Phi}_1,\widetilde{\Phi}_2\right\}\right\Vert$.
		\end{itemize}	
	\end{lem} 		
	\begin{proof}
	The strategy of the proof is to compose $\widetilde{\Phi}$ with a pseudoretract onto a square that maps the segment $\gamma^{}_{N-1}\setminus \left(B_\delt(p^{}_1)\cup B_\delt(p^{}_{N-1})\right)$ to a vertex, which is going to be the new point, $p^{}_N$. Namely, we seek to contract to a point the problematic segment where the overlap of $\widetilde{\Phi}\left(X^{}_N\right)$ and $\widetilde{\Phi}\left(X^{}_{N-1}\right)$ occurs. The set $X^{}_N$ is then mapped to the left of $p^{}_N$ and $X^{}_{N-1}$ is mapped to the right of $p^{}_N$, while we maintain control on how much the Poisson bracket is increased.
	Recall Remark \ref{rem:pseudoretract_property}; The pseudoretract of $\mathbb{R}^2$ onto a square, described in Proposition \ref{prop:pseudoretractsExist} has the property of mapping a sector spanned by the opposite angle to the interior angle at a vertex to that vertex. See Figure \ref{fig2}.
	
	Let $\eps>0$ and consider a square of side length $1-\eps$ which we denote by
	\[\Delta_2:=
	[0,1-\eps]\times[0,1-\eps].\] 
	Let $S$ be a symplectomorphism mapping $\Delta$ to a subset $S(\Delta)\subset \mathbb{R}^2$ such that
	\begin{itemize}
		\item $\Delta_2$ is contained in the interior of $S\left(\Delta\right)$.
		\item The arc $S\left(\gamma^{}_{N-1}\setminus \left(B_\delt(p^{}_1)\cup B_\delt(p^{}_{N-1})\right)\right)$ lies inside the sector $A_v$ spanned by the opposing angle to the interior angle at the vertex $(1-\eps,1-\eps)$, with its boundary points lying on the line extensions of the edges of $\Delta_2$ adjacent to $v$.
	\end{itemize}
	\begin{figure}[h]
			\centering
			\includegraphics[scale=0.62]{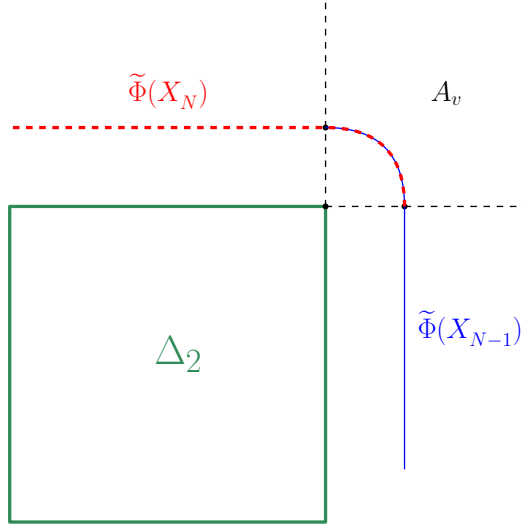}
			\caption{The configuration of the arcs and the angle $A_v$.}
			\label{fig2}
	\end{figure}
	Let $T$ be an $\eps$-pseudoretract onto $\Delta_2$, and $\mathcal{H}_{\frac {1}{1-\eps}}$ the homothety by a factor of $\frac {1}{\sqrt{1-\eps}}$,
	Define
	\[
		\widehat{\Phi}:= \mathcal{H}_{\frac {1}{1-\eps}}\circ T \circ S\circ\widetilde{\Phi}.
	\]
	Then
	\begin{align*}
		\left\Vert \left\{\widehat{\Phi}_1,\widehat{\Phi}_2\right\} \right\Vert\le
		\frac {1+\eps}{1-\eps}\left\Vert \left\{S\circ\widetilde{\Phi}_1,S\circ\widetilde{\Phi}_2\right\} \right\Vert\le
		\frac {1+\eps}{1-\eps}\left\Vert \left\{\widetilde{\Phi}_1,\widetilde{\Phi}_2\right\} \right\Vert \overset{\eps\to 0}{\longrightarrow} \left\Vert\left\{\widetilde{\Phi}_1,\widetilde{\Phi}_2\right\} \right\Vert.
	\end{align*}
	
	We then define the points $p^\prime_i$ by $p^\prime_i := \mathcal{H}_{\frac{1}{1-\eps}}\circ T\circ S (p_i)\in \partial \Delta_2$ for $1\le i \le N-1$ and $p^\prime_N := v\in \partial \Delta_2$. By our construction, for all $1\le k \le N$ it holds that $\widehat{\Phi}(X_k)\subseteq \gamma^{}_k$, concluding the proof.
	\end{proof}
	To conclude the theorem's proof, WLOG one can assume that $\Phi$ satisfies (CS) with respect to a point $p\in \Delta \setminus \left(B_{\delt}(p^{}_{1}) \cup B_{\delt}(p^{}_{N-1})\right)$ otherwise use the methods of Proposition \ref{prop:pb_invariance} to move $p$ outside these balls. Lemmata \ref{lem:pb3pb4_step1} and \ref{lem:pb3pb4_step2} alter $\Phi$ by post-compositions, so (CS) condition is preserved. Hence, the function $\widehat{\Phi}$ constructed in Lemma \ref{lem:pb3pb4_step2} is admissible for $\Pb_{N}^{\DDelta_2}$ where $\DDelta_2 = ( [0,1] \times [0,1],\{p^\prime_1,\ldots,p^\prime_N\})$, and by choosing $\delta \to 0$ and $\eps \to 0$ small enough, the theorem follows.

\end{proof}
\subsection{Proof of Theorem \ref{thm:pb3 limit}}
The invariant $\Pb^{}_X$ satisfies monotonicity and semi-continuity properties similarly to $\pb_n$ in \cite{BEP}.
\begin{prop}\label{prop:PbaMonotone}
	\textbf{Monotonicity:} Let $X,Y$ be two compact sets such that $X\subseteq Y$.Denote by $i\colon X \hookrightarrow Y$ the inclusion map. Then for any class ${\alpha \in H^1(Y;\mathbb{Z})}$ we have:
	\[\Pb^{}_X(i^*\alpha)  \le \Pb^{}_Y(\alpha).\]
\end{prop}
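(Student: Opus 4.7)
The plan is to show directly that every map witnessing an upper bound for $\Pb_Y(\alpha)$ also witnesses an upper bound for $\Pb_X(i^*\alpha)$, so that taking infima yields the desired inequality. This is essentially a tautological verification from the definition.

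First I would fix an arbitrary $\phi=(\phi_1,\phi_2)\colon M\to\overline{B_1}$ admissible for $\Pb_Y(\alpha)$, i.e.\ with $\phi_1,\phi_2$ compactly supported, with some open $U\supset Y$ satisfying $\phi(U)\subseteq S^1$, and with $[\phi|_Y]=\alpha\in H^1(Y;\mathbb{Z})$. The goal is to check that the same $\phi$ lies in the class of maps over which the infimum defining $\Pb_X(i^*\alpha)$ is taken.

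The compact-support condition is unchanged. Since $X\subseteq Y\subseteq U$, the very same $U$ is an open neighborhood of $X$ on which $\phi$ takes values in $S^1$. It remains to identify the homotopy class of $\phi|_X$: by functoriality of the pullback under the inclusion $i\colon X\hookrightarrow Y$, we have
\[
[\phi|_X]=[\phi|_Y\circ i]=i^*[\phi|_Y]=i^*\alpha,
\]
using the identification of $H^1(\,\cdot\,;\mathbb{Z})$ with homotopy classes of maps to $S^1$. Hence $\phi$ is admissible in the definition of $\Pb_X(i^*\alpha)$.

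The Poisson bracket $\|\{\phi_1,\phi_2\}\|$ appearing in the infimum is computed on $M$ and depends only on $\phi$, not on whether we view $\phi$ as a competitor for $\Pb_X(i^*\alpha)$ or for $\Pb_Y(\alpha)$. Thus $\Pb_X(i^*\alpha)\le \|\{\phi_1,\phi_2\}\|$, and taking the infimum over all admissible $\phi$ for $\Pb_Y(\alpha)$ gives $\Pb_X(i^*\alpha)\le \Pb_Y(\alpha)$. There is no real obstacle here; the only subtlety worth noting is the naturality of the correspondence between $H^1(\,\cdot\,;\mathbb{Z})$ and $[\,\cdot\,,S^1]$ under restriction, but this is standard.
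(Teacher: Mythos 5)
Your argument is correct and is exactly the paper's proof: any map admissible for $\Pb_Y(\alpha)$ is, by restriction and functoriality of $[\,\cdot\,,S^1]$, admissible for $\Pb_X(i^*\alpha)$, so the infimum can only decrease. You have just spelled out the one-line verification in more detail.
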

\begin{proof}
	Any function $\Phi\colon M \to \overline{B_1}$ admissible for the set over which we infimize in $\Pb^{}_Y(\alpha)$ is also admissible for $\Pb^{}_X(i^*\alpha)$
\end{proof}

\begin{prop}\label{prop:PbaSemi}
	\textbf{Semicontinuity:} Let $X$ be a compact subset of a symplectic manifold $M$.
	Fix a class $\alpha \in H^1(X;\mathbb{Z})$ and consider an extension of it to a neighborhood, $U^{}_X$, of $X$, denoted by $\bar{\alpha} \in H^1(U^{}_X;\mathbb{Z})$, which exists by Proposition \ref{prop:rhoIsom}.
	Let $X_n$ be a sequence of compact sets contained in $U_X$, converging to $X$ in the Hausdorff distance.
	The class $\bar{\alpha}$ determines a class in $H^1({X_n};\mathbb{Z})$ by pullback along the inclusion $X_n\hookrightarrow U^{}_X$, which we denote by $\bar{\alpha}\vert^{}_{X_n}$.
	Then:
	\[\limsup_{n\to\infty}\; \Pb^{}_{X_n}(\bar{\alpha}\vert^{}_{X_n}) \le \Pb^{}_{X}(\alpha).\]
\end{prop}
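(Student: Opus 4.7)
The plan is to show that every admissible competitor $\Phi$ for $\Pb^{}_X(\alpha)$ is also an admissible competitor for $\Pb^{}_{X_n}(\bar{\alpha}\vert^{}_{X_n})$ for all sufficiently large $n$, with the same value of $\left\Vert\{\Phi_1,\Phi_2\}\right\Vert$; passing to infima then yields the claimed inequality. Concretely, given $\eps>0$, I would first fix a function $\Phi=(\Phi_1,\Phi_2)\colon M\to\overline{B_1}$ admissible for $\Pb^{}_X(\alpha)$ with $\left\Vert\{\Phi_1,\Phi_2\}\right\Vert\le \Pb^{}_X(\alpha)+\eps$, together with an open neighborhood $U\supset X$ on which $\Phi(U)\subseteq S^1$ and $[\Phi\vert^{}_X]=\alpha$. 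Shrinking $U$ if necessary, I arrange $U\subseteq U^{}_X$.

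The main step is to reconcile two a priori different extensions of $\alpha$ to a neighborhood of $X$: the class $[\Phi\vert^{}_U]\in H^1(U;\mathbb{Z})$ coming from the function itself, and the given extension $\bar{\alpha}\vert^{}_U\in H^1(U;\mathbb{Z})$. Both restrict to $\alpha\in H^1(X;\mathbb{Z})$. Using the direct-limit description $H^1(X;\mathbb{Z})=\varinjlim^{}_{V\supset X} H^1(V;\mathbb{Z})$, which is the content of the cited Proposition \ref{prop:rhoIsom}, the two classes must already agree on some smaller open neighborhood $V\subseteq U$ of $X$; that is, $[\Phi\vert^{}_V]=\bar{\alpha}\vert^{}_V$ in $H^1(V;\mathbb{Z})$.

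Once such a $V$ has been produced, the Hausdorff convergence $X_n\to X$ supplies $N^{}_V\in\mathbb{N}$ with $X_n\subseteq V$ for every $n\ge N^{}_V$. For those $n$, the neighborhood $V$ certifies admissibility of $\Phi$ in the definition of $\Pb^{}_{X_n}(\bar{\alpha}\vert^{}_{X_n})$: one has $\Phi(V)\subseteq S^1$, and $[\Phi\vert^{}_{X_n}]=[\Phi\vert^{}_V]\vert^{}_{X_n}=\bar{\alpha}\vert^{}_{X_n}$. Hence $\Pb^{}_{X_n}(\bar{\alpha}\vert^{}_{X_n})\le \left\Vert\{\Phi_1,\Phi_2\}\right\Vert\le \Pb^{}_X(\alpha)+\eps$, and taking $\limsup_{n\to\infty}$ followed by $\eps\to 0$ concludes the proof.

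The only real obstacle is this reconciliation step: moving from an equality of cohomology classes on $X$ to an equality on a common open neighborhood of $X$. This reflects the fact that the restriction $H^1(U;\mathbb{Z})\to H^1(X;\mathbb{Z})$ is typically not injective for a single fixed $U$, and is precisely what the direct-limit statement of Proposition \ref{prop:rhoIsom} resolves. Everything else is routine: choosing a near-optimizer $\Phi$, absorbing $\eps$ into the Poisson bracket bound, and translating Hausdorff convergence into eventual containment in the open set $V$.
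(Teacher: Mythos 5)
Your proposal is correct and follows essentially the same route as the paper's proof: pick a near-optimal admissible $\Phi$, use the injectivity of $\rho$ from Proposition \ref{prop:rhoIsom} to find a common neighborhood of $X$ on which $[\Phi]$ agrees with $\bar{\alpha}$, and use Hausdorff convergence to get $X_n$ inside that neighborhood eventually. Your version is a bit more explicit about the reconciliation step (why the two a priori different extensions of $\alpha$ agree after shrinking), which is the crux; the paper states this more tersely but invokes the same proposition.
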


\begin{proof}
	For any function $\Phi\colon M \to \overline{B_1}$ admissible for the set over which we infimize in $\Pb^{}_X(\alpha)$ there exists $N$ such that for all $n\ge N$, $\Phi$ is also admissible for $\Pb^{}_{X_n}(\bar{\alpha})$. This is because there exists a neighborhood of $X$, $U_\Phi$, such that $\Phi(U_\Phi)\subset S^1$ and $[\Phi\vert^{}_{U_\Phi}] = \bar{\alpha}\vert^{}_{U_\Phi}$ (by Proposition \ref{prop:rhoIsom}) and there exists $N$ such that for all $n>N$, $X_n\subset U_\Phi$.
\end{proof}

\begin{cor}\label{cor:PbaMonotoneLimit}
	Let $X$ be compact set in a symplectic manifold $M$, and let $X_n$ be a monotone decreasing sequence of compact sets (namely $X_{n+1}\subseteq X_n$), containing $X$, converging in the Hausdorff metric to $X$. Fix  $\alpha\in H^1(X;\mathbb{Z})$ and consider an extension of it to a neighborhood $U^{}_X$ of $X$, denoted by $\bar{\alpha} \in H^1(U^{}_X;\mathbb{Z})$, which exists by Proposition \ref{prop:rhoIsom}.
	Then
	\[\lim_{n\to\infty} \Pb_{X_n} (\bar{\alpha}\vert^{}_{X_n}) = \Pb_X(\alpha).\]
\end{cor}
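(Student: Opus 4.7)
The proof is essentially a two-sided bound, with each direction supplied by one of the two propositions immediately preceding the corollary.

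First, I would observe that since $X_n \to X$ in Hausdorff distance and $U_X$ is an open neighborhood of $X$, there exists $N_0$ so that $X_n \subseteq U_X$ for all $n \ge N_0$. Since the conclusion concerns only the limit, I may pass to the tail and assume $X_n \subseteq U_X$ for all $n$, so that $\bar{\alpha}\vert^{}_{X_n}$ is well-defined.

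For the upper bound, Proposition \ref{prop:PbaSemi} applies verbatim: the sequence $X_n$ lies in $U_X$ and converges to $X$ in the Hausdorff distance, so
\[
\limsup_{n\to\infty} \Pb^{}_{X_n}\bigl(\bar{\alpha}\vert^{}_{X_n}\bigr) \le \Pb^{}_X(\alpha).
\]

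For the matching lower bound I would use the monotonicity statement, Proposition \ref{prop:PbaMonotone}. Since $X \subseteq X_n$ for every $n$, let $i \colon X \hookrightarrow X_n$ denote the inclusion. Applying the proposition to the pair $X \subseteq X_n$ with the class $\bar{\alpha}\vert^{}_{X_n} \in H^1(X_n;\mathbb{Z})$ yields
\[
\Pb^{}_X\bigl(i^*(\bar{\alpha}\vert^{}_{X_n})\bigr) \le \Pb^{}_{X_n}\bigl(\bar{\alpha}\vert^{}_{X_n}\bigr).
\]
Since $\bar{\alpha}$ was chosen as an extension of $\alpha$, functoriality of pullback gives $i^*(\bar{\alpha}\vert^{}_{X_n}) = \alpha$, so $\Pb^{}_X(\alpha) \le \Pb^{}_{X_n}(\bar{\alpha}\vert^{}_{X_n})$ for every $n$, hence $\Pb^{}_X(\alpha) \le \liminf_{n\to\infty}\Pb^{}_{X_n}(\bar{\alpha}\vert^{}_{X_n})$.

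Combining the two estimates sandwiches the $\liminf$ and $\limsup$ between the same value $\Pb^{}_X(\alpha)$, so the limit exists and equals $\Pb^{}_X(\alpha)$. There is no real obstacle here; the only small point to be careful about is checking the functoriality identity $i^*(\bar{\alpha}\vert^{}_{X_n}) = \alpha$, which is immediate from $X \subseteq X_n \subseteq U_X$ and the fact that $\bar{\alpha}\vert^{}_X = \alpha$ by the definition of an extension.
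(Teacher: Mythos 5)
Your proof is correct and follows essentially the same two-sided strategy as the paper: Proposition \ref{prop:PbaMonotone} gives $\Pb^{}_X(\alpha) \le \Pb^{}_{X_n}(\bar{\alpha}\vert^{}_{X_n})$ for every $n$, and Proposition \ref{prop:PbaSemi} gives the matching $\limsup$ bound. The paper additionally notes that monotonicity of the sequence $X_n$ makes $\Pb^{}_{X_n}(\bar{\alpha}\vert^{}_{X_n})$ a monotone decreasing sequence, so convergence is immediate without comparing $\liminf$ and $\limsup$, but this is a cosmetic difference.
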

\begin{proof}
	By the monotonicity property we have that $\Pb_{X_n}(\bar{\alpha}\vert^{}_{X_n})$ is a monotone decreasing sequence of numbers bounded below by $\Pb_{X}(\bar{\alpha}\vert^{}_{X}) = \Pb_{X}(\alpha)$, therefore it converges. On the other hand we have from semi-continuity that
	\[\lim_{n\to\infty} \Pb^{}_{X_n}(\bar{\alpha}\vert^{}_{X_n}) = \limsup_{n\to\infty}\; \Pb^{}_{X_n}(\bar{\alpha}\vert^{}_{X_n}) \le \Pb^{}_{X}(\alpha).\]
	Completing the proof.
\end{proof}

For brevity and to avoid cumbersome notation we describe the proof of Theorem \ref{thm:pb3 limit} for $N=3$, the proof for any $N$ is similar. We prove the following:
\begin{prop}
	Let $X_1,X_2,X_3$ be a triple of compact subsets in a symplectic manifold, $M$, such that ${X_1\cap X_2 \cap X_3=\emptyset}$. Let $K_n$ be a decreasing sequence of compact neighborhoods of $X_1 \cap X_3$ converging to $X_1 \cap X_3$ in the Hausdorff distance, and moreover assume that $K_1\cap X_2=\emptyset$. Then:
	\begin{enumerate}
		\item 
		The following limit exists:
		\[
		\lim_{K_n\searrow X_1\cap X_3} \Pb_4(\overline{X_1 \setminus K_n},X_2,\overline{X_3\setminus K_n}, K_n).
		\]
		\item 
		$\displaystyle\lim_{K_n\searrow X_1\cap X_3} \Pb_4(\overline{X_1 \setminus K_n},X_2,\overline{X_3\setminus K_n}, K_n) = \Pb_3(X_1,X_2,X_3)$.
	\end{enumerate} 
\end{prop}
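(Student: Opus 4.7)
The plan is to first reduce $\Pb_4$ to $\Pb_3$ via Theorem \ref{thm:actualThm1}, then use the homotopical invariant $\Pb_X$ of Theorem \ref{thm:htpyCharacterization} to bound $\limsup$ from above and $\liminf$ from below by $\Pb_3(X_1, X_2, X_3)$. By Theorem \ref{thm:actualThm1}, uniting the last two sets and noting that $\overline{X_3 \setminus K_n} \cup K_n = X_3 \cup K_n$ (since $\overline{X_3 \setminus K_n} \subseteq X_3$),
\[
\Pb_4(\overline{X_1 \setminus K_n}, X_2, \overline{X_3 \setminus K_n}, K_n) = \Pb_3(\overline{X_1 \setminus K_n}, X_2, X_3 \cup K_n),
\]
so it suffices to prove $\lim_n \Pb_3(\overline{X_1 \setminus K_n}, X_2, X_3 \cup K_n) = \Pb_3(X_1, X_2, X_3)$.

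The $\limsup$ bound is immediate: any $\Phi \in \mathcal{F}_{\DDelta,3}^\prime(X_1, X_2, X_3)$ with witnessing open neighborhoods $U_i \supset X_i$ satisfying $\Phi(U_i) \subseteq \gamma_i$ is admissible for $\Pb_3(\overline{X_1 \setminus K_n}, X_2, X_3 \cup K_n)$ for all large $n$, because Hausdorff convergence $K_n \searrow X_1 \cap X_3 \subseteq X_3 \subseteq U_3$ forces $K_n \subseteq U_3$ eventually, so $U_3$ itself serves as the required open neighborhood of $X_3 \cup K_n$ (while $U_1 \supseteq \overline{X_1 \setminus K_n}$ and $U_2 \supseteq X_2$ are unchanged). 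Hence $\limsup_n \Pb_3(\overline{X_1 \setminus K_n}, X_2, X_3 \cup K_n) \le \Pb_3(X_1, X_2, X_3)$.

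For the $\liminf$ bound, apply Theorem \ref{thm:htpyCharacterization} to rewrite $\Pb_3(X_1, X_2, X_3) = \Pb_X(\alpha)$ for $X = X_1 \cup X_2 \cup X_3$ and $\alpha$ its decomposition class, and $\Pb_3(\overline{X_1 \setminus K_n}, X_2, X_3 \cup K_n) = \Pb_{Y_n}(\beta_n)$ for $Y_n := X \cup K_n$ with $\beta_n$ its decomposition class. A representative $f_n : Y_n \to S^1 = \gamma_1 \cup \gamma_2 \cup \gamma_3$ of $\beta_n$ restricts to $X$ as a map sending $X_1 \setminus K_n$ into $\gamma_1$, $X_1 \cap K_n$ into $\gamma_3$, $X_2$ into $\gamma_2$, and $X_3$ into $\gamma_3$. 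The key claim is $\beta_n|_X = \alpha$: the reassignment of the subset $X_1 \cap K_n \subseteq X_1$ from the $\gamma_1$-side to the $\gamma_3$-side does not alter the homotopy class, because $X_1$ maps into the contractible subarc $\gamma_1 \cup \gamma_3$, the overlaps $X_1 \cap X_2$ and $X_2 \cap X_3$ are forced to vertices of $\Delta$ where both decompositions agree, and on the overlap $X_1 \cap X_3$ the values of $f_n$ can be deformed inside $\gamma_3$ to the common vertex $p_1 = \gamma_1 \cap \gamma_3$. Granted this identification, the monotonicity Proposition \ref{prop:PbaMonotone} gives $\Pb_X(\alpha) = \Pb_X(\beta_n|_X) \le \Pb_{Y_n}(\beta_n)$, so $\liminf_n \Pb_{Y_n}(\beta_n) \ge \Pb_X(\alpha)$, and the two bounds together prove existence of the limit and its value.

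The main obstacle is the topological verification that $\beta_n|_X = \alpha$, which requires an explicit gluing of three partial homotopies — the deformation retraction of $\gamma_1 \cup \gamma_3$ onto $\gamma_1$ collapsing $\gamma_3$ to $p_1$ on $X_1$, a deformation inside $\gamma_3$ on $X_3$ sending $f_n|_{X_1 \cap X_3}$ to $p_1$, and a trivial homotopy within $\gamma_2$ on $X_2$ — together with a compatibility check on the pairwise intersections using that $X_1 \cap X_2 \cap X_3 = \emptyset$ and $K_n \cap X_2 = \emptyset$. The remaining steps are either direct admissibility arguments or invocations of already-established properties of $\Pb_X$.
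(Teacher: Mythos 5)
Your proposal is correct and follows essentially the same route as the paper: reduce $\Pb_4$ to $\Pb_3$ via Theorem \ref{thm:actualThm1}, pass to the homotopical invariant via Theorem \ref{thm:htpyCharacterization}, and conclude by monotonicity/semicontinuity of $\Pb^{}_X(\cdot)$. The only organizational difference is that you split the argument into a $\limsup$ bound (direct admissibility, which is the content of Proposition \ref{prop:PbaSemi}) and a $\liminf$ bound (Proposition \ref{prop:PbaMonotone}), whereas the paper invokes the combined Corollary \ref{cor:PbaMonotoneLimit}; and the paper phrases the class identification by choosing a single representative $g$ on $Z_1$ and restricting, whereas you restrict the representative of $\beta_n$ to $X$ and verify $\beta_n\vert^{}_X=\alpha$ directly. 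Your explicit treatment of the homotopy-class identification --- via the lift of $\gamma_1\cup\gamma_3$, vertex agreement on $X_1\cap X_2$ and $X_2\cap X_3$, and the deformation of $f_n\vert^{}_{X_1\cap X_3}$ to $p_1$ within $\gamma_3$ --- is in fact the point the paper treats rather tersely (the paper's choice $g(K_1)=\gamma_1\cup\gamma_3$ does not literally make $g\vert^{}_{Z_n}$ admissible as a representative satisfying $g(X_3\cup K_n)\subseteq\gamma_3$, so an implicit homotopy of the same kind is needed there as well), so flagging and sketching it is the right instinct.
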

\begin{rem}
	Ideally one would like to take $\overline{X_1\setminus X_3}, X_2, \overline{X_3\setminus X_1}, X_1\cap X_3$ as the quadruple of sets in $\Pb_4$  in the proposition, but this quadruple might not satisfy $\left(\overline{X_1\setminus X_3}\right) \cap \left(\overline{X_3\setminus X_1}\right) = \emptyset$. Therefore we have to approximate $ X_1\cap X_3$ from outside by compact neighborhoods.
\end{rem}
\begin{rem}
	The sequences of sets, $\overline{X_1 \setminus K_n}$ and $\overline{X_3 \setminus K_n}$, are monotone increasing, and the sequence $K_n$ is monotone decreasing, thus one cannot directly apply \cite{BEP}'s monotonicity statement for this quadruple. Nevertheless the union $Z_n = X_1 \cup X_2 \cup X_3 \cup K_n$ is indeed monotone decreasing so we can proceed with monotonicity of $\Pb^{}_{Z_n}$:
\end{rem}
\begin{proof}
	By Theorem \ref{thm:actualThm1}:
	\[\Pb_4(\overline{X_1 \setminus K_n}, X_2,\overline{X_3\setminus K_n}, K_n) = \Pb_3(\overline{X_1 \setminus K_n},X_2,\overline{X_3\setminus K_n}\cup K_n),\]
	and by Theorem \ref{thm:htpyCharacterization}:
	\[\Pb_3(\overline{X_1 \setminus K_n},X_2,\overline{X_3\setminus K_n}\cup K_n) = \Pb^{}_{Z_n}([f_n]),\]
	where $Z_n := X_1 \cup X_2 \cup X_3 \cup K_n$ and $f_n$ is a function ,$f_n:Z_n \to S^1$, such that:

	\[f_n\left(\overline{X_1 \setminus K_n}\right) \subseteq \gamma_1,\quad f_n\left(X_2\right) \subseteq \gamma_2, \quad f_n\left(\overline{X_3\setminus K_n}\cup K_n\right) \subseteq \gamma_3.\]
	We note that $\overline{X_3\setminus K_n}\cup K_n = X_3\cup K_n$ and that we can choose $f_n = g\vert^{}_{Z_n}$ where $g:Z_1\to S^1$ is a function such that 
	\[\forall i,\: g\left(X_i\right)\subseteq \gamma_i\text{ and  }g\left(K_1\right) = \gamma_1\cup \gamma_3.\] 
	Therefore $\Pb^{}_{Z_n}([f_n]) = \Pb^{}_{Z_n}([g\vert^{}_{Z_n}])$ and by Corollary \ref{cor:PbaMonotoneLimit}:
	\[\lim_{n\to\infty} \Pb^{}_{Z_n}([g\vert^{}_{Z_n}]) = \Pb^{}_{Z}([g\vert^{}_{Z}]).\]
	Where $Z = X_1\cup X_2 \cup X_3$.
	Finally, by Theorem \ref{thm:htpyCharacterization}, $\Pb^{}_{Z}([g\vert^{}_{Z}]) = \Pb_3(X_1,X_2,X_3)$.
\end{proof}

\subsection{$\pb_3$ and Dynamics}

The above proposition, expressing $\Pb_3$ a limit of $\Pb_4$s, yields a dynamical interpretation of $\pb_3(X_1,X_2,X_3)$ in terms of Hamiltonian chords connecting $X_1\setminus X_3$ and $X_3\setminus X_1$ for flows of functions which are bounded below by $1$ near $X_2$ and bounded above by $0$ near $X_3\cap X_1$, in a similar fashion to the dynamical interpretation given for $\pb_4$ in \cite{BEP}. Recall that $\nicefrac{1}{\pb_4(X_0, X_1, Y_0, Y_1)}$ has the following dynamical interpretation (Note that in $\pb_4$ we do not use the cyclical notation for the sets): 
\begin{thm*}[\cite{BEP} 1.10]
	Let $X_0, X_1, Y_0, Y_1 \subset M$ be a quadruple of compact sets such that $X_0 \cap X_1 = Y_0 \cap Y_1 = \emptyset$ and $\nicefrac{1}{\pb4(X_0, X_1, Y_0, Y_1)} = p > 0$.
	Let $G \in C^{\infty}_c(M)$ be a Hamiltonian
	with $G\vert^{}_{Y_0} \le  0 $ and $G\vert^{}_{Y_1} \ge  1$ generating a Hamiltonian flow $g_t$. Then there exists a Hamiltonian chord of time length $\le p$ going from $X_1$ to $X_0$ or sfrom $X_0$ to $X_1$.
\end{thm*}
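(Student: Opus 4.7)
The plan is to argue by contradiction: suppose no Hamiltonian chord of time length $\le p$ connects $X_0$ and $X_1$ under $g_t$ in either direction, and use the flow of $G$ to manufacture a function $F$ admissible for $\pb_4(X_0,X_1,Y_0,Y_1)$ (paired with the given $G$) whose Poisson bracket satisfies $\|\{F,G\}\| < 1/p$, contradicting $\pb_4 = 1/p$.

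The first step is to upgrade the qualitative no-chord assumption to a quantitative one. By compactness of $X_0, X_1$ and continuity of the flow, there is $\eta > 0$ such that $g_t(X_0) \cap X_1 = \emptyset$ for all $|t| \le p + \eta$. Setting $T = p + \eta/2$, I would introduce the forward saturations
\[A_0 := \bigcup_{s \in [0,T]} g_s(X_0), \qquad A_1 := \bigcup_{s \in [0,T]} g_s(X_1).\]
These are compact as continuous images of $[0,T] \times X_i$, and disjoint: a common point $x = g_{s_0}(x_0) = g_{s_1}(x_1)$ with $x_i \in X_i$ would give $g_{s_1 - s_0}(x_1) = x_0$, a chord between $X_0$ and $X_1$ of time length $|s_1 - s_0| \le T$, contradicting the strengthened hypothesis.

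Next, I would pick a smooth compactly supported $f \colon M \to [0,1]$ with $f \equiv 0$ on a neighborhood of $A_0$ and $f \equiv 1$ on a neighborhood of $A_1$, and set
\[F(x) := \frac{1}{T} \int_0^T f\bigl(g_s(x)\bigr)\, ds.\]
Since $g_s$ is the identity off the support of $G$, $F$ is compactly supported. For $x \in X_0$ the orbit segment $\{g_s(x) : s \in [0,T]\}$ lies in $A_0$ by construction, so $F(x) = 0$; symmetrically $F \equiv 1$ on $X_1$, and $0 \le F \le 1$. Thus $(F, G)$ is admissible for $\pb_4$. Differentiating along the flow of $G$ yields
\[\{F,G\}(x) = \frac{d}{dt}\Big|_{t=0} F(g_t(x)) = \frac{1}{T}\bigl(f(g_T(x)) - f(x)\bigr),\]
so $\|\{F,G\}\| \le 1/T < 1/p$, the required contradiction.

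The main subtlety is the disjointness of $A_0$ and $A_1$: this is precisely where both directions of the no-chord hypothesis (chords $X_0 \to X_1$ \emph{and} $X_1 \to X_0$) are used. Once the saturation step converts the geometric chord-avoidance into topological separation of disjoint compact sets, the averaging trick and the flow-derivative identity $X_G F = \{F,G\}$ do the rest.
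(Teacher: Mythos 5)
This statement is quoted from [BEP, Theorem~1.10] and is not re-proved in the present paper, so there is no in-paper argument to compare against. Your proof is correct, and it is in substance the argument that Buhovsky--Entov--Polterovich use: assume no chord of time length $\le p$, upgrade this by a compactness/continuity argument to no chord of time length $\le p+\eta$, form the flow-saturations $A_0,A_1$ (disjoint precisely because of the two-directional no-chord hypothesis), pick a smooth bump $f$ separating them, and average $f$ over the flow to get an admissible $F$; the reparametrized-integral computation then gives $\{F,G\}(x)=\tfrac{1}{T}\bigl(f(g_T(x))-f(x)\bigr)$, hence $\|\{F,G\}\|\le 1/T < 1/p = \pb_4$, a contradiction. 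One small point worth recording explicitly: compact support of $F$ follows because $g_s=\mathrm{id}$ off $\operatorname{supp}G$, so $F=f$ there, and one should take $\operatorname{supp}f\cup\operatorname{supp}G$ as the compact set; you gesture at this, and it is fine. No gaps.
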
 
We now show the following (slightly weaker due to noncompactness of $X_3\setminus X_1$ and $X_1\setminus X_3$) analogue for $\pb_3$:
\begin{cor}\label{cor:pb3_dynamics}
	Let $X_1,X_2,X_3$ be a triple of compact subsets in a symplectic manifold, $M$, such that ${X_1\cap X_2 \cap X_3=\emptyset}$. Let $G\in C^{\infty}_{c}$ be a Hamiltonian $G \colon M \to \mathbb{R}$ such that
	$G^{}\vert_{X_2} \le 0$ and $G\vert^{}_{X_1\cap X_3} \ge 1$.
	Assume ${p_0 := \frac{1}{2\pb_3(X_1, X_2, X_3)} > 0}$
	Then for all $p>p_0$ there exists a trajectory of the Hamiltonian flow of G of time-length  $\le p$ going
	from $X_3\setminus X_1$ to $X_1\setminus X_3$ or from $X_1\setminus X_3$ to $X_3\setminus X_1$.
\end{cor}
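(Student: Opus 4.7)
The plan is to reduce to the BEP $\pb_4$ dynamical theorem cited just above, using the approximation of $2\pb_3$ by $\pb_4$'s provided by Theorem \ref{thm:pb3 limit}. Pick a decreasing sequence $K_n\searrow X_1\cap X_3$ of compact neighborhoods satisfying the hypotheses of that theorem. Then
\[
\pb_4\bigl(\overline{X_1\setminus K_n},\,\overline{X_3\setminus K_n},\,X_2,\,K_n\bigr)\ \longrightarrow\ 2\pb_3(X_1,X_2,X_3),
\]
so $1/\pb_4\to p_0$ as $n\to\infty$. We may assume $\pb_3\in(0,\infty)$ since otherwise the claim is vacuous.

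The first step is to verify the disjointness hypotheses of BEP for the quadruple $\bigl(\overline{X_1\setminus K_n},\,\overline{X_3\setminus K_n},\,X_2,\,K_n\bigr)$. Since $K_n$ is a \emph{neighborhood} of $X_1\cap X_3$ we have $X_1\cap X_3\subseteq\operatorname{Int}(K_n)$, and any $x\in\overline{X_1\setminus K_n}$ has arbitrarily nearby points of $K_n^c$, so $x\notin\operatorname{Int}(K_n)$. Thus
\[
\overline{X_1\setminus K_n}\ \subseteq\ X_1\setminus\operatorname{Int}(K_n)\ \subseteq\ X_1\setminus X_3,
\]
and symmetrically $\overline{X_3\setminus K_n}\subseteq X_3\setminus X_1$. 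This immediately gives the BEP disjointness $\overline{X_1\setminus K_n}\cap\overline{X_3\setminus K_n}=\emptyset$, while $X_2\cap K_n=\emptyset$ holds by hypothesis. Crucially, any chord we produce between these two closures will automatically have its endpoints in $X_1\setminus X_3$ and $X_3\setminus X_1$, not merely in the closures.

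I would next address the mismatch between BEP's hypothesis $G|_{Y_1}\ge 1$ and our hypothesis $G|_{X_1\cap X_3}\ge 1$: $G$ need not be $\ge 1$ on all of $K_n$. By continuity of $G$ and compactness of $X_1\cap X_3$ there exist $\eps_n\to 0^+$ with $G|_{K_n}\ge 1-\eps_n$. Rescale to $\widetilde G_n:=G/(1-\eps_n)$; then $\widetilde G_n|_{X_2}\le 0$ and $\widetilde G_n|_{K_n}\ge 1$. Since $\widetilde G_n$ differs from $G$ by a constant factor, a chord of $\widetilde G_n$ of time-length $T$ is exactly a chord of $G$ of time-length $T/(1-\eps_n)$.

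Finally, given $p>p_0$, choose $n$ so large that $1/\pb_4\bigl(\overline{X_1\setminus K_n},\overline{X_3\setminus K_n},X_2,K_n\bigr)<p(1-\eps_n)$; this is possible because the left side tends to $p_0<p$ and $\eps_n\to 0$. BEP applied to $\widetilde G_n$ and the quadruple produces a chord of $\widetilde G_n$ of length at most $1/\pb_4$ between $\overline{X_1\setminus K_n}$ and $\overline{X_3\setminus K_n}$; undoing the rescaling yields a chord of $G$ of length at most $1/[(1-\eps_n)\pb_4]<p$ from $X_1\setminus X_3$ to $X_3\setminus X_1$, or vice versa. I expect the rescaling bookkeeping and the containment lemma $\overline{X_i\setminus K_n}\subseteq X_i\setminus X_j$ to be the only subtle points; everything else is routine assembly.
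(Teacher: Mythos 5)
Your proof is correct and follows the same strategy as the paper: approximate $2\pb_3$ by $\pb_4$'s via Theorem \ref{thm:pb3 limit}, rescale $G$ to restore the $\ge 1$ condition on the compact neighborhood $K_n$, and invoke BEP's dynamical theorem for $\pb_4$. You are in fact slightly more careful than the paper on two points: you spell out the containment $\overline{X_i\setminus K_n}\subseteq X_i\setminus X_j$ (so the chord endpoints genuinely lie in $X_1\setminus X_3$ and $X_3\setminus X_1$ as the statement asserts), and you write the $\pb_4$ arguments in the non-cyclic BEP order $\pb_4(\overline{X_1\setminus K_n},\overline{X_3\setminus K_n},X_2,K_n)$ that the dynamical theorem actually requires, whereas the paper's proof text slips into the cyclic $\Pb_4$ ordering.
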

\begin{proof}
	For every $\eps>0$ there exists $K_{\eps}$ compact such that $X_1\cap X_3 \subset K_{\eps}$ and 
	\[
	\nicefrac{1}{\left(2\pb_3(X_1,X_2,X_3)\right)} -\eps \le \nicefrac{1}{\pb_4(\overline{X_1 \setminus K_{\eps}},X_2,\overline{X_3\setminus K_{\eps}}, K_{\eps})} \le \nicefrac{1}{\left(2\pb_3(X_1,X_2,X_3)\right)}.
	\]
	Pick $\eps>0$ such that $ \nicefrac{1}{\pb_4(\overline{X_1 \setminus K_{\eps}},X_2,\overline{X_3\setminus K_{\eps}}, K_{\eps})} \ge p_0-\eps > 0$. For any $\delta>0$ there exists a compact set $K_{\eps,\delta}$ such that $X_1\cap X_3 \subseteq K_{\eps,\delta}\subseteq K_{\eps}$ and $G\vert^{}_{K_{\eps,\delta}} \ge 1-\delta$.
	We have that:
	\[
	0 < \nicefrac{1}{\pb_4(\overline{X_1 \setminus K_{\eps}},X_2,\overline{X_3\setminus K_{\eps}}, K_{\eps})} \le \nicefrac{1}{\pb_4(\overline{X_1 \setminus K_{\eps,\delta}},X_2,\overline{X_3\setminus K_{\eps,\delta}}, K_{\eps,\delta})} \le p_0.
	\]
	Consider $\frac{G}{1-\delta}$, it is a Hamiltonian such that $\frac{G}{1-\delta}^{}\big\vert_{X_2} \le 0$ and $\frac{G}{1-\delta}\big\vert^{}_{K_{\eps,\delta}} \ge 1$, and thus from the positivity of $\pb_4(\overline{X_1 \setminus K_{\eps,\delta}},X_2,\overline{X_3\setminus K_{\eps,\delta}}, K_{\eps,\delta})$ there exists a Hamiltonian chord of $\frac{G}{1-\delta}$ connecting $\overline{X_1 \setminus K_{\eps,\delta}}$ and $\overline{X_3\setminus K_{\eps,\delta}}$ (in some direction) with time length $\le p_0$, 
	Therefore by rescaling we get a chord of $G$ connecting the same sets with time length $\le \frac{p_0}{1-\delta}$. Picking $\delta$ small enough such that $\frac{p_0}{1-\delta} < p$ finishes the proof.
\end{proof}

\section{The invariant $\Pb_X(\alpha)$}	
\subsection{Setup}
For a topological space $X$, denote by $[X:S^1]$ the set of homotopy classes of continuous maps from $X$ to $S^1$.
If $Z\subset X$ is a subspace, then restriction (of functions and of homotopies) induces a map $[X:S^1]\to[Z:S^1]$.
Given a compact subset $X\subset M$ of a manifold $M$ we define: 
\[{\mathcal{N}H^1(X) := \displaystyle\varinjlim_{U\supseteq X} [U:S^1]}. \]
Where the limit is taken on the directed system of open sets $U$ containing $X$.
The notation $H^1$ is suggestive of the well known isomorphism $H^1(X;\mathbb{Z})\cong[X:S^1]$, due to $S^1$ being a $K(\mathbb Z, 1)$ space, and where the cohomology is \v{C}ech cohomology of the constant sheaf $\mathbb{Z}$. (The isomorphism is proven in \cite{Morita1975}).
$\mathcal{N}$ stands for $\mathcal{N}\!$eighborhood. Restriction of maps to $X$ induces a map $\rho \colon \mathcal{N}H^1(X) \to [X:S^1]$. Moreover, in light of the isomorphism with cohomology, the sets $[U:S^1]$ admit a group structure, and all the maps induced by restriction to subsets are in fact group homomorphisms. Moreover, since $S^1$ is a topological group, the group structure on $[U,S^1]$ is induced from the group structure on $S^1$. See chapter 22 in \cite{May} for details on Eilenberg-MacLane spaces and their relation to cohomology.

\begin{prop}\label{prop:rhoIsom}
	The map $\rho$ defined above is surjective and injective, i.e. an isomorphism of groups.
\end{prop}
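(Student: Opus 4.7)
The map $\rho$ is clearly a group homomorphism (the $S^1$-valued pointwise product commutes with restriction), so bijectivity will upgrade automatically to an isomorphism. Both surjectivity and injectivity are extension problems for continuous maps into $S^1$, and I will handle them uniformly by the ``Tietze-then-normalize'' trick: extend into the ambient $\mathbb{R}^2$, then project radially back to $S^1$ on the open set where the extension avoids the origin. The ingredients are metrizability of $M$, Tietze's extension theorem, compactness of $X$, and the tube lemma.

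\textbf{Surjectivity.} Let $f\colon X\to S^1\subset\mathbb{R}^2$ be continuous. Since $X$ is closed in the metrizable manifold $M$, Tietze's theorem applied coordinatewise gives a continuous extension $\tilde{F}\colon M\to\mathbb{R}^2$ with $\tilde{F}|_X=f$. As $|\tilde{F}||_X\equiv 1$ and $X$ is compact, the open set $U:=\{x\in M : |\tilde{F}(x)|>1/2\}$ contains $X$. Then $\tilde{f}:=\tilde{F}/|\tilde{F}|\colon U\to S^1$ is continuous, extends $f$ strictly, and $[\tilde{f}]\in[U:S^1]$ maps under $\rho$ to $[f]$.

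\textbf{Injectivity.} Suppose $[g_1]\in[U_1:S^1]$ and $[g_2]\in[U_2:S^1]$ have $\rho([g_1])=\rho([g_2])$; pick representatives and a homotopy $H\colon X\times[0,1]\to S^1$ with $H_0=g_1|_X$, $H_1=g_2|_X$. Let $U:=U_1\cap U_2$ and assemble the continuous map
\[
F\colon A:=\bigl(U\times\{0,1\}\bigr)\cup\bigl(X\times[0,1]\bigr)\longrightarrow S^1,
\]
where $A$ is closed in the metrizable space $U\times[0,1]$ and $F$ is $g_1|_U$, $g_2|_U$, $H$ on the three pieces (the gluing is consistent on $X\times\{0,1\}$). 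Apply Tietze to the two coordinate functions of $F$ to obtain a continuous $\tilde{F}\colon U\times[0,1]\to\mathbb{R}^2$ with $\tilde{F}|_A=F$; because $|F|\equiv 1$ on the compact set $X\times[0,1]$, the open set $W:=\{|\tilde{F}|>1/2\}$ contains $X\times[0,1]$, so $\tilde{F}/|\tilde{F}|\colon W\to S^1$ is an $S^1$-valued extension of $F$. Now the tube lemma, applied to the compact $X\subset U$ and $[0,1]$, produces an open $V$ with $X\subset V\subset U$ and $V\times[0,1]\subset W$. The restriction of $\tilde{F}/|\tilde{F}|$ to $V\times[0,1]$ is a continuous homotopy from $g_1|_V$ to $g_2|_V$, so $[g_1]=[g_2]$ in the directed limit $\mathcal{N}H^1(X)$.

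\textbf{Main obstacle.} The only subtle point is guaranteeing that the Tietze extension takes values outside a neighborhood of $0\in\mathbb{R}^2$ on an open set large enough to run the argument; for surjectivity this is immediate from compactness of $X$, while for injectivity it uses compactness of $X\times[0,1]$ together with the closedness of $A$ in $U\times[0,1]$. Once this is in hand, the radial normalization $v\mapsto v/|v|$ converts Tietze extensions into $S^1$-valued ones without altering values on the relevant closed set, and the tube lemma upgrades a neighborhood of $X\times[0,1]$ to a product neighborhood, which is exactly what the directed-limit comparison requires.
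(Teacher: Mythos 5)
Your proof is correct and follows essentially the same strategy as the paper: Tietze-extend into $\mathbb{R}^2$, then push back to $S^1$ on the open set where the extension stays away from the origin (you normalize $v\mapsto v/|v|$ where the paper composes with an abstract retraction $\mathbb{R}^2\setminus B\to S^1$), and for injectivity run the identical argument for a homotopy on a closed subset of a cylinder, extracting a product neighborhood $V\times[0,1]$ at the end. The only cosmetic differences are that you work in $U_1\cap U_2$ and invoke the tube lemma explicitly, whereas the paper shrinks to a precompact $\overline{W}\subset U$ and cites compactness of the resulting $Z$; both are fine.
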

\begin{proof}
	We begin with \textbf{surjectivity} of $\rho$. Consdier $S^1$ embedded in $R^2$ as the unit circle.
	Let $[\varphi]\in[X:S^1]$, that is $\varphi\colon X\to S^1\subset \mathbb{R}^2$, and denote $\varphi(x) = (\varphi_1(x),\varphi_2(x))$ where each $\varphi_i$ is a map $\varphi_i\colon X \to \mathbb{R}$.
	Since $X$ is closed in $M$, by the Tietze extension theorem, each $\varphi_i$ extends to a continuous function $\widetilde{\varphi_i}\colon M \to \mathbb{R}$, which together define $\widetilde{\varphi} \colon M \to \mathbb{R}^2$.
	Let $B_{1/2}\subset \mathbb{R}^2$ be an open ball of radius $\frac{1}{2}$ centered at the origin. There exists a retract $\psi\colon R^2\setminus B \to S^1$.
	Consider $M\setminus\overline{\varphi^{-1}(B)}$ which is an open neighborhood of $X$, then $\psi \circ \widetilde{\varphi} \colon M\setminus\overline{\varphi^{-1}(B)}\to S^1$ is an extension of $\varphi$ to an open neighborhood of $X$, inducing an element $[\psi\circ\varphi] \in \mathcal{N}H^1(X)$ such that $\rho\left([\psi\circ\widetilde{\varphi}]\right) = [\varphi]$.\\
	Next we prove \textbf{injectivity} of $\rho$. We need to show the following:
	Let $U$ be an open neighborhood of $X$ and let $\varphi_0, \varphi_1:U\to S^1$, assume $\rho([\varphi_0])=\rho([\varphi_1])$, i.e. there exists a homotopy $F_t\colon X\times [0,1] \to S^1$ such that $F_0=\varphi_0|_X$ and $F_1=\varphi_1|_X$. We have to show that there exists an open set $V$ such that $X\subseteq V \subseteq U$, and a homotopy $G_t\colon V\times[0,1] \to S^1$ such that $G_0=\varphi_0|^{}_V$ and $G_1=\varphi_1|^{}_V$.
	The argument for existence of such a homotopy is similar to the argument showing surjectivity.
	WLOG, assume $\overline{U}$ is compact. Pick an open set $W\subset U$ such that $\overline{W} \subset U$.
	Consider the following closed subset of $M\times [0,1]$:
	\[
		Z:= \left(\overline{W} \times \{0\}\right) \cup \left(X\times[0,1]\right)  \cup \left(\overline{W} \times \{1\}\right).
	\]
	Consider the function $\psi\colon Z \to S^1$ defined by:
	\begin{align*}
	\psi(x,t):= 
		\begin{cases*}
		\varphi_0(x) & 	if $t=0$ \\
		\varphi_1(x) &     if $t=1$ \\
		F_t(x) & otherwise.
		\end{cases*}		
	\end{align*} 
	 Repeating the argument used in the surjectivity part, this time for the  the compact subset $Z\subset M\times [0,1]$ and $\psi\colon Z \to S^1$, in the manifold with boundary $M\times[0,1]$ yields an extension of $\psi$, denoted by $\widetilde{\psi}:N\to S^1$, where $N$ is some neighborhood of $Z$ in $M\times[0,1]$. By compactness of $Z$, the set $N$ contains an open set of the form $V\times[0,1]$ where $V$ is open in $M$. The restriction $\widetilde{\psi}|^{}_V$ provides the desired homotopy between $\varphi_0|^{}_V$ and $\varphi_1|^{}_V$.
\end{proof}

\begin{clm}\label{clm:pb3_nonempty}
	Let $M$ be a manifold and $X$ a compact set such that $X=X_1\cup X_2 \cup X_3$, with each $X_k$ compact such that $X_1\cap X_2 \cap X_3 = \emptyset$. Then, there exists a continuous $f\colon X \to S^1$ such that for all $1\le k \le 3$, $f(X_k)\subseteq \gamma^{}_k$ where $\gamma^{}_k=\left\lbrace e^{i\theta} \,\middle|\, \theta\in \left[\frac{2\pi (k-1)}{3},\frac{2\pi k}{3}\right] \right\rbrace$. In fact we can choose $f$ such that it extends to a neighborhood, $\mathcal {O}p\left( X_k \right)$, of each $X_k$ and satisfies $f\left(\mathcal {O}p\left( X_k \right)\right) \subseteq \gamma^{}_k$.
\end{clm}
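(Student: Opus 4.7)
The plan is to build $f$ piecewise on $X$, together with a continuous extension to a neighborhood in $M$, exploiting the fact that the vanishing of the triple intersection $X_1\cap X_2\cap X_3$ forces the three double intersections $B_k:=X_{k-1}\cap X_k$ (indices mod $3$) to be pairwise disjoint compact sets, each disjoint from the ``opposite'' set $X_{k+1}$. Setting $p_k:=e^{2\pi i(k-1)/3}$, so $\{p_k\}=\gamma_{k-1}\cap\gamma_k$, the requirement $f(X_j)\subseteq\gamma_j$ forces $f$ to be constantly $p_k$ on $B_k$. I will arrange $f$ to be constantly $p_k$ on an entire open neighborhood of $B_k$ in $M$, and then interpolate along $\gamma_k$ over the remainder of $X_k$ via Urysohn's lemma.

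First I set up the separating neighborhoods using normality of the (metrizable) manifold $M$. Choose open sets $N_1,N_2,N_3\subseteq M$ with $B_k\subseteq N_k$, the closures $\overline{N_k}$ pairwise disjoint, and $\overline{N_k}\cap X_{k+1}=\emptyset$; the last property uses that $B_k$ and $X_{k+1}$ are disjoint compact sets. Put $Y_k:=X_k\setminus(N_1\cup N_2\cup N_3)$. Since $X_k\cap N_{k-1}=\emptyset$ by the choice of $N_{k-1}$, in fact $Y_k=X_k\setminus(N_k\cup N_{k+1})$, and the $Y_k$ are pairwise disjoint compact sets (two distinct $X_i,X_j$ can only meet inside some $N_m$). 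Using normality once more, pick pairwise disjoint open sets $V_k\supseteq Y_k$ in $M$ with $V_k\cap \overline{N_{k-1}}=\emptyset$.

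Next I define a continuous extension $\widetilde f\colon U\to S^1$ on $U:=V_1\cup V_2\cup V_3\cup N_1\cup N_2\cup N_3$ by declaring $\widetilde f\equiv p_k$ on each $N_k$, and on each $V_k$ by invoking Urysohn's lemma to produce a continuous function $V_k\to\gamma_k$ (identifying $\gamma_k$ with $[0,1]$ so that $p_k\leftrightarrow 0$ and $p_{k+1}\leftrightarrow 1$) that equals $0$ on the closed subset $V_k\cap\overline{N_k}$ and $1$ on the closed subset $V_k\cap\overline{N_{k+1}}$; these two sets are disjoint since $\overline{N_k}\cap\overline{N_{k+1}}=\emptyset$. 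By the setup the only non-empty overlaps among the six open sets are $V_k\cap N_k$ and $V_k\cap N_{k+1}$, and on each the two prescriptions for $\widetilde f$ coincide, so $\widetilde f$ is continuous on $U$. The open set $\mathcal{O}p(X_k):=V_k\cup N_k\cup N_{k+1}$ contains $X_k$ (using $X_k\cap N_{k-1}=\emptyset$), and $\widetilde f(\mathcal{O}p(X_k))\subseteq\gamma_k$ because $\widetilde f(V_k)\subseteq\gamma_k$, $\widetilde f(N_k)=\{p_k\}\subseteq\gamma_k$, and $\widetilde f(N_{k+1})=\{p_{k+1}\}\subseteq\gamma_k$. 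Taking $f$ to be the restriction of $\widetilde f$ to $X$ completes the construction.

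The main obstacle is the bookkeeping of intersections: one has to verify that the six open sets $V_k,N_k$ meet one another in exactly the prescribed pattern so that the piecewise definition of $\widetilde f$ is simultaneously well-defined on overlaps and produces the correct image on each $\mathcal{O}p(X_k)$. Once the separation has been arranged, the construction is a routine Urysohn interpolation on each $V_k$.
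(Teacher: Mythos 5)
Your proof is correct and takes a genuinely different route from the paper's. The paper observes that $X_1\cap X_2\cap X_3=\emptyset$ makes $\left(M\setminus X_1,\,M\setminus X_2,\,M\setminus X_3\right)$ an open cover of $M$, takes a subordinate partition of unity $(\rho_1,\rho_2,\rho_3)$, and notes that $(\rho_1,\rho_2)$ maps $M$ into the solid $2$-simplex while sending each $X_k$ into the edge where $\rho_k$ vanishes; restricting to $X$ and composing with a homeomorphism of the triangle's boundary onto $S^1$ produces $f$, and the neighborhood refinement comes from composing with a pseudoretract onto a slightly smaller triangle before passing to the boundary. You instead isolate the pairwise-disjoint double intersections $B_k=X_{k-1}\cap X_k$ by open sets $N_k$ with disjoint closures avoiding $X_{k+1}$, separate the leftovers $Y_k=X_k\setminus(N_k\cup N_{k+1})$ by pairwise disjoint open sets $V_k$, and glue the constant maps $p_k$ on the $N_k$ to Urysohn interpolations along $\gamma_k$ on the $V_k$. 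Both work; the paper's argument is shorter because the simplex/pseudoretract machinery is already set up for $\Pb_3$, while yours is more elementary (only normality and Urysohn), produces the neighborhoods $\mathcal{O}p(X_k)=V_k\cup N_k\cup N_{k+1}$ explicitly with no further retract needed, and exhibits directly the forced behaviour $f\equiv p_k$ on $B_k$. A small presentational remark: to obtain the $V_k$ simultaneously pairwise disjoint and disjoint from $\overline{N_{k-1}}$, it is cleanest to first choose pairwise disjoint open $W_k\supseteq Y_k$ (possible since the $Y_k$ are pairwise disjoint compact sets) and then set $V_k:=W_k\setminus\overline{N_{k-1}}$, which retains both properties.
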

\begin{proof}
	This argument essentially appears in \cite{BEP}, showing that the set over which we infimize in $\pb_3$ is not empty. Consider the open cover of $M$ given by $\left(M\setminus X_1,M\setminus X_2,M\setminus X_3\right)$ and let $\rho^{}_i$ be a partition of unity subordinate to that cover. Consider $f=(\rho_1\vert^{}_X,\rho_2\vert^{}_X)\colon X\to \Delta$ where $\Delta$ is the boundary of a right triangle whose vertices are $(0,0), (1,0), (0,1)$. The result is obtained by composing $f$ with a homeomorphism from $\Delta$ to $S^1$. In fact, by composing $f$ with a pesudoretract on a smaller triangle first, we get $f\left(\mathcal {O}p\left( X_k \right)\right) \subseteq \gamma^{}_k$
\end{proof}	

\begin{clm}
	Let $X=X_1\cup X_2 \cup X_3$ with $X_k$ compact such that $X_1\cap X_2 \cap X_3 = \emptyset$. Then any two functions $f,g\colon X \to S^1$ such that for all $1\le k \le 3$, $f(X_k),g(X_k)\subseteq \gamma^{}_k$ are homotopic.
\end{clm}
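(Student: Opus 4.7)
The plan is to build an explicit homotopy piece by piece along each $X_k$ using the fact that each arc $\gamma_k$ is homeomorphic to $[0,1]$, and to glue via the pasting lemma.

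First I would record the following rigidity on pairwise intersections. Labelling the arcs so that $\gamma_1, \gamma_2, \gamma_3$ are the consecutive arcs of $S^1$ between three marked points $p_1, p_2, p_3$, the intersection of any two distinct arcs is a single endpoint: $\gamma_1 \cap \gamma_2 = \{p_2\}$, $\gamma_2 \cap \gamma_3 = \{p_3\}$, $\gamma_3 \cap \gamma_1 = \{p_1\}$ (and the triple intersection is empty, which is why the hypothesis $X_1 \cap X_2 \cap X_3 = \emptyset$ is needed to make the data consistent). Consequently, for any $x \in X_i \cap X_j$ with $i \ne j$, the condition $f(x) \in \gamma_i \cap \gamma_j$ and $g(x) \in \gamma_i \cap \gamma_j$ forces both to equal the shared endpoint; in particular $f$ and $g$ agree on all pairwise intersections.

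Next I would fix, for each $k$, a homeomorphism $\sigma_k \colon [0,1] \to \gamma_k$ and write $f(x) = \sigma_k(\phi^f_k(x))$ and $g(x) = \sigma_k(\phi^g_k(x))$ for $x \in X_k$, where $\phi^f_k, \phi^g_k \colon X_k \to [0,1]$ are continuous. Then define
\[
H_k \colon X_k \times [0,1] \to \gamma_k \subseteq S^1, \qquad H_k(x, t) := \sigma_k\bigl((1-t)\phi^f_k(x) + t\phi^g_k(x)\bigr).
\]
Each $H_k$ is continuous with $H_k(\cdot, 0) = f|_{X_k}$ and $H_k(\cdot, 1) = g|_{X_k}$.

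I would then check compatibility: on $X_i \cap X_j$ the rigidity above gives $f(x) = g(x) = p$, where $p$ is the common endpoint; but $p$ corresponds to a fixed value in $[0,1]$ under each of $\sigma_i$ and $\sigma_j$, so both $H_i(x, t)$ and $H_j(x, t)$ are constantly equal to $p$ for all $t$. Hence the three partial homotopies agree on overlaps. Since $\{X_k \times [0,1]\}_{k=1,2,3}$ is a finite closed cover of $X \times [0,1]$, the pasting lemma produces a continuous map $H \colon X \times [0,1] \to S^1$ with $H(\cdot, 0) = f$ and $H(\cdot, 1) = g$, proving homotopy equivalence. There is no real obstacle; the only thing to be careful about is the orientation convention for the parametrizations $\sigma_k$, and this is handled by noting that the homotopy on each overlap is literally constant, so orientations become irrelevant.
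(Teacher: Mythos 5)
Your proof is correct and takes essentially the same approach as the paper's: identify each arc $\gamma_k$ with $[0,1]$, use the affine (linear) homotopy in those coordinates on each $X_k$, observe that $f$ and $g$ are forced to agree (and hence the homotopies are constant) on pairwise intersections, and glue. The paper phrases the gluing step as homotoping "sequentially" over the $X_k$, while you invoke the pasting lemma for a simultaneous gluing; these are just two presentations of the same argument.
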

\begin{proof}
	Identify $\gamma_k$ with $[0,1]$ via a homeomorphism $\sigma_k \colon \gamma_k \to [0,1]$. For each $k$ we homotope between $\sigma_k\circ f\vert^{}_{X_k}$ and $\sigma_k\circ g\vert^{}_{X_k}$ by the linear homotopy:
	\[h_t :=\sigma_k^{-1}\circ\left((1-t)\sigma_k\circ f + t\sigma_k\circ g\right).\]
	Note that $f(X_k\cap X_{k+1})=g(X_k\cap X_{k+1})$ which equals the far (counterclockwise) endpoint of $\gamma_k$ (addition is to be taken cyclically). Moreover, by linearity this also holds for $h_t$, for all $t\in[0,1]$. Hence, we can homotope between $f$ and $g$ over each $X^{}_k$ sequentially.
\end{proof}
We summarize the contents of the above claims in the following corollary:
\begin{cor}\label{cor:decompositionClass} Any decomposition $X=X_1\cup X_2 \cup X_3$ such that ${X_1\cap X_2 \cap X_3 = \emptyset}$ determines a class $\alpha\in H^1(X;\mathbb{Z})$, hence a class in $\alpha\in\mathcal{N}H^1(X)$. The class $\alpha$ is defined by picking any function $f\colon X\to S^1$ such that $f(X_k)\subseteq \gamma^{}_k$ for $1\le k \le 3$ and setting $\alpha=[f]$. 
\end{cor}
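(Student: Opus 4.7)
The plan is to assemble the corollary directly from the two preceding claims and the setup already developed in this subsection. The first claim supplies existence: there is at least one continuous $f\colon X\to S^1$ with $f(X_k)\subseteq\gamma_k$ for $k=1,2,3$. The second claim supplies uniqueness at the level of homotopy classes: any two such maps are homotopic. Taken together they show that the subset of $[X:S^1]$ consisting of homotopy classes representable by some $f$ with $f(X_k)\subseteq\gamma_k$ is nonempty and has exactly one element, which is precisely the class we label $\alpha$.

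To promote $\alpha\in[X:S^1]$ to a class in $H^1(X;\mathbb{Z})$, I would invoke the canonical bijection $[X:S^1]\cong H^1(X;\mathbb{Z})$ coming from the fact that $S^1$ is a $K(\mathbb{Z},1)$ (the reference \cite{Morita1975} is cited in the setup paragraph and establishes this at the level of \v{C}ech cohomology of the constant sheaf $\mathbb{Z}$, which is what is needed here since $X$ is only assumed compact). To further promote $\alpha$ to a class in $\mathcal{N}H^1(X)$, I would apply Proposition \ref{prop:rhoIsom}, which asserts that the restriction map $\rho\colon\mathcal{N}H^1(X)\to[X:S^1]$ is a group isomorphism; so there is a unique preimage of $\alpha$ in $\mathcal{N}H^1(X)$, and the enhanced version of the first claim (which explicitly states that $f$ can be chosen to extend to neighborhoods $\mathcal{O}p(X_k)$ with $f(\mathcal{O}p(X_k))\subseteq\gamma_k$) gives a representative of this preimage by an honest map defined on an open neighborhood of $X$.

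Since every ingredient has been proved, there is no real obstacle; the only thing to be slightly careful about is to observe that the argument that any two such $f$ are homotopic works equally well when both $f$ and $g$ are defined and satisfy the arc condition on open neighborhoods of the $X_k$, so the class in $\mathcal{N}H^1(X)$ is independent of the choice of representative and of the chosen neighborhoods. This is immediate from the second claim by restricting both neighborhood-extensions back to $X$ and applying $\rho^{-1}$ to the resulting common homotopy class.
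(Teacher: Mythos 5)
Your proposal is correct and matches the paper's own reasoning: the paper presents this corollary as a summary of Claim~\ref{clm:pb3_nonempty} (existence of $f$, with the neighborhood-extended version) and the subsequent claim (any two such $f$ are homotopic), combined with the $[X:S^1]\cong H^1(X;\mathbb{Z})$ identification and Proposition~\ref{prop:rhoIsom} for passing to $\mathcal{N}H^1(X)$. No genuine difference in approach; you have simply spelled out the assembly more explicitly than the paper does.
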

\begin{defn}
	For any $\alpha\in H^1(X;\mathbb{Z})$ define :
	\[Pb^{}_X(\alpha):=\inf\left\lbrace 
	\left\Vert \left\lbrace \phi_1, \phi_2 \right\rbrace \right\Vert
	\,\middle|\,
	\begin{tabular}{@{}l@{}}
	$\Phi=(\phi_1,\phi_2)\colon M\to \overline{B_1}$ such that \\ $\phi_1,\phi_2$ have compact support, and \\
	$\exists \underset{\text{open}}{U}\supset X,\ \phi\vert^{}_U\subseteq S^1,\  \left[ \phi\vert^{}_X\right]=\alpha$
	\end{tabular}
	\right\rbrace.\]
\end{defn}
\begin{rem}
	While writing the paper the author learned that a similar definition of this sort (using $\frac{\Phi^*\omega_{\mathbb{R}^2}}{\omega_{\mathbb{R}^2}}$, over a similar class of functions) was suggested years ago by Frol Zapolsky.
\end{rem}

\subsection{Proof of Theorem \ref{thm:htpyCharacterization}}
\begin{thm}
	Let $X$ be be a compact subset of a symplectic manifold $M$, and assume $X=X_1\cup X_2 \cup X_3$ where $X_1\cap X_2 \cap X_3 = \emptyset$ and each $X_k$ is compact. Denote by $\alpha\in H^1(X;\mathbb{Z})$ the class determined by the decomposition $X=X_1\cup X_2 \cup X_3$ as in Corollary \ref{cor:decompositionClass}. Then:
	\[\Pb_3(X_1,X_2,X_3)=\Pb^{}_X(\alpha).\]
\end{thm}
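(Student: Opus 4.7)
The proof will split into two inequalities. The easier direction, $\Pb^{}_X(\alpha)\le \Pb_3(X_1,X_2,X_3)$, will follow from the independence of $\DDelta$ established in Proposition \ref{prop:pb_invariance}: take $\DDelta$ so that $\Delta=\overline{B_1}$, with $\gamma_i$ the three consecutive arcs on $S^1=\partial\overline{B_1}$ used to define $\alpha$. Then any $\Phi\in\mathcal{F}^\prime_{\DDelta,3}$ satisfies $\Phi(U_k)\subseteq\gamma_k\subset S^1$ for some open $U_k\supseteq X_k$, so taking $U=\bigcup_k U_k$ gives $\Phi(U)\subseteq S^1$; thus $\Phi$ is admissible for $\Pb^{}_X(\alpha)$, and its homotopy class on $X$ equals $\alpha$ by Corollary \ref{cor:decompositionClass}.

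For the reverse inequality $\Pb_3(X_1,X_2,X_3)\le \Pb^{}_X(\alpha)$, I start from an admissible $\phi\colon M\to\overline{B_1}$ with $\phi(U)\subseteq S^1$ on a neighborhood $U$ of $X$ and $[\phi\vert^{}_X]=\alpha$, and the plan is to modify $\phi$ by an $x$-dependent rotation, producing a $\phi^\prime$ admissible for $\Pb_3$ without increasing the Poisson bracket norm. Fix a smooth representative $f\colon X\to S^1$ of $\alpha$ with $f(X_k)\subseteq\gamma_k$ (as provided by Claim \ref{clm:pb3_nonempty}), and use the surjectivity half of Proposition \ref{prop:rhoIsom} to extend $f$ to a smooth $\tilde f\colon V\to S^1$ on some open $V$ with $X\subseteq V\subseteq U$. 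By the injectivity half of Proposition \ref{prop:rhoIsom} one has $[\tilde f]=[\phi\vert^{}_V]$ in $\mathcal{N}H^1(X)$, so after further shrinking $V$ the ratio $\tilde f\cdot\phi^{-1}\colon V\to S^1$ is null-homotopic, hence lifts to a smooth $\tilde g\colon V\to \mathbb{R}$ with $e^{i\tilde g}\phi=\tilde f$ on $V$. Pick a smooth cutoff $\chi$ with $\chi\equiv 1$ on a smaller neighborhood $W\supset X$ and $\operatorname{supp}\chi\subset V$, set $\tilde G:=\chi\tilde g$ (extended by $0$ to all of $M$), and define $\phi^\prime:=e^{i\tilde G}\phi\colon M\to\overline{B_1}$.

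This $\phi^\prime$ coincides with $\tilde f$ on $W$, hence $\phi^\prime(X_k)\subseteq\gamma_k$, and it coincides with $\phi$ outside $\operatorname{supp}\chi$, so the compactness conditions are preserved; thus $\phi^\prime$ is admissible for $\Pb_3$ taking $\Delta=\overline{B_1}$. The key input is the pointwise identity
\[
\left\{\phi^\prime_1,\phi^\prime_2\right\}=\left\{\phi_1,\phi_2\right\}+\tfrac{1}{2}\left\{|\phi|^2,\tilde G\right\},
\]
obtained from the direct computation $(\phi^\prime)^{*}\omega^{}_{\mathbb{R}^2}=\phi^{*}\omega^{}_{\mathbb{R}^2}+\tfrac{1}{2}\,d|\phi|^2\wedge d\tilde G$ (using $\phi^\prime=e^{i\tilde G}\phi$ in complex notation) together with the Poisson bracket formula from Claim \ref{clm:symp_invariance}. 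The correction term vanishes pointwise: on $U$ we have $|\phi|^2\equiv 1$, hence $d|\phi|^2=0$; outside $U$, since $\operatorname{supp}\chi\subset V\subseteq U$, we have $\tilde G\equiv 0$, hence $d\tilde G=0$. Therefore $\left\Vert\{\phi^\prime_1,\phi^\prime_2\}\right\Vert=\left\Vert\{\phi_1,\phi_2\}\right\Vert$, and passing to infima gives the desired inequality. The main obstacle I anticipate is exactly the existence of the global lift $\tilde g$ on a neighborhood of $X$; this is precisely what the isomorphism $\rho\colon\mathcal{N}H^1(X)\to[X:S^1]$ from Proposition \ref{prop:rhoIsom} delivers, and the remainder of the construction is a controlled use of cutoff functions exploiting that $|\phi|^2$ is constant wherever the rotation is nonzero.
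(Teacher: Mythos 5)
Your forward inequality is essentially the paper's, modulo a small normalization issue you gloss over: a $\Phi$ admissible for $\Pb_3$ with $\Delta=\overline{B_1}$ satisfies the (CS) condition with respect to some $p\in\overline{B_1}$, whereas $\Pb_X(\alpha)$ requires $\phi_1,\phi_2$ to be genuinely compactly supported (i.e.\ (CS) at $p=(0,0)$); one must move $p$ to the center as in Proposition \ref{prop:pb_invariance}, which is how the paper obtains the $\eps$ in that direction.

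Your reverse inequality is correct, and it is a genuinely different argument from the paper's. The paper thickens $X$ to a cofibration (Lemma \ref{lem:cofib_thicken}), uses the homotopy extension property to splice the given homotopy $f_t$ between $\phi\vert_U$ and $f$ into a global map on the annulus $\Phi^{-1}(A^1_{1-2\eps})$, glues this to $\phi$ on the inner disc, and then recovers the Poisson bracket bound up to $\frac{1+\eps}{1-3\eps}$ via a pseudoretract, sending $\eps\to0$ at the end. You instead observe that $\tilde f$ and $\phi$ differ near $X$ by a multiplicative gauge $e^{i\tilde g}$, cut off $\tilde g$ to $\tilde G=\chi\tilde g$, and set $\phi'=e^{i\tilde G}\phi$. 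Your key identity $d\phi'_1\wedge d\phi'_2=d\phi_1\wedge d\phi_2+\tfrac{1}{2}\,d|\phi|^2\wedge d\tilde G$ is correct (a direct computation in real coordinates gives $d\phi'_1\wedge d\phi'_2 = d\phi_1\wedge d\phi_2 + (\phi_1\,d\phi_1+\phi_2\,d\phi_2)\wedge d\tilde G$), and the resulting correction to the Poisson bracket vanishes identically, since $|\phi|^2\equiv1$ on $U$ (where $d|\phi|^2=0$) and $\tilde G\equiv0$ off $\operatorname{supp}\chi\subset U$ (where $d\tilde G=0$), these two open sets covering $M$. This gives $\Vert\{\phi'_1,\phi'_2\}\Vert=\Vert\{\phi_1,\phi_2\}\Vert$ exactly, with no $\eps$, no pseudoretract, and no cofibration machinery. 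The only ingredients you do need to make explicit are smoothness of $\tilde f$ and hence of $\tilde g$ (arrange by Whitney approximation, as the paper does elsewhere), that $\chi$ can be taken compactly supported since $X$ is compact (so $\phi'$ inherits (CS) at $(0,0)$), and that $\tilde f$ should be chosen, via Claim \ref{clm:pb3_nonempty}, to map small neighborhoods $\mathcal{O}p(X_k)$ into $\gamma_k$ so that $\phi'$ is actually admissible for $\mathcal{F}^\prime_{\DDelta,3}$. With those routine remarks filled in, your gauge-rotation argument is a cleaner route to the hard inequality than the paper's homotopy-extension construction, and it buys an exact equality of bracket norms rather than an $\eps$-approximate one.
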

\begin{proof}
	We start by showing $\Pb_3(X_1,X_2,X_3)\ge \Pb^{}_X(\alpha)$.
	Any $\Phi\colon M \to \Delta$ admissible for $\Pb_3$ can be made into a map admissible for $\Pb_X(\alpha)$ with an arbitrary $\eps$-increase of the norm of the Poisson bracket by composing with a smooth map from the triangle to a disc as done in the proof of Claim \ref{prop:pb_invariance}.
	
	We now turn to proving $\Pb^{}_X(\alpha) \ge Pb_3(X_1,X_2,X_3)$. 
	Let $\Phi\colon M \to \overline{B_1}$ admissible for $\Pb_X(\alpha)$, then there exists an open set $U\supset X$ such that $\Phi(U) \subseteq S^1$.
	Shrinking $U$ if necessary, it follows from Claims \ref{prop:rhoIsom} and \ref{clm:pb3_nonempty} that there exist open sets $U_0, U_1, U_2$ such that:
	\begin{enumerate}
		\item $U_k\supset X_k$ for $1\le k \le 3$.
		\item  $U=U_0\cup U_1 \cup U_2$.
		\item  There exists a function $f\colon U \to S^1$ such that for all $1\le k \le 3$, $f\left( U_k \right) \subseteq \gamma^{}_k$.
		\item  $\Phi\vert^{}_U \overset{htpy}{\sim}f$.
	\end{enumerate}
	We denote the homotopy in (4.) by $f_t$, so and $f_0 = \Phi\vert^{}_U$ and $f_1=f$. By the smooth approximation theorem (Whitney's approximation), $f_t$ can be chosen to be smooth.
	
	Let $\eps>0$. We will construct a smooth function $\widehat{\Phi}\colon M \to \overline{B_1}$ such that:
	\begin{enumerate}
		\item $\widehat{\Phi}^{-1}\left(B_{1-3\eps}\right) = \Phi^{-1}\left(B_{1-3\eps}\right)$.
		\item $\widehat{\Phi}\vert^{}_{\Phi^{-1}\left(B_{1-3\eps}\right)} = \Phi\vert^{}_{\Phi^{-1}\left(B_{1-3\eps}\right)}$.
		\item $\widehat{\Phi}\vert^{}_U = f$.
	\end{enumerate}
	From it, we obtain by a composition with a pseudoretract onto $\overline{B_{1-3\eps}}$ a function with the desired bounds on the norm of the Poisson bracket and the desired behaviour on $X$, by application of Corollary. \ref{cor:pb_bound_retract}

	To construct $\widehat{\Phi}$ we will need the the definition and characterization of a cofibration (Sometimes called a Borsuk pair), see \cite{May} for a deeper treatment.
	\begin{defn}
		A continous map $i\colon Z\to X$ is called a \textbf{cofibration} if it satisfies the homotopy extension property with respect to all spaces $Y$, that is, if for every homotopy $f_t\colon Z\times[0,1] \to Y$ and every map $F_0\colon X\to Y$ extending $f_0$, namely, $F_0|^{}_Z=f_0$, there exists an extension of $f_t$ to a homotopy $F_t\colon X\times[0,1]\to Y$, such that $F_t|^{}_Z=f_t$.
		In a diagram:
		\[
		\begin{tikzcd}
		Z \arrow[hookrightarrow,r, "\operatorname{id}\times\{0\}"] \arrow[hookrightarrow,d, "i"]
		& {Z\times[0,1]} \arrow[d, "f_t"]  \arrow[hookrightarrow,ddr, bend left, "i\times \operatorname{id}"]\\ 
		X \arrow[r, "F_0"]  \arrow[hookrightarrow,drr, bend right, "\operatorname{id}\times\{0\}"']
		& Y \\ & & 
		\arrow[ul, dotted, "\exists F_t"'] X\times[0,1]
		\end{tikzcd}
		\]
	\end{defn}
	The next lemma, whose proof we postpone to the end of the section, states that for a closed subset, $X$, of a manifold, $M$, there is an arbitrarily small "thickening" such that the inclusion of the thickened neighborhood into $M$ is a cofibration.
	\begin{lem}\label{lem:cofib_thicken}
		Let $M$ be a manifold, and let $X\subset U\subset M$, such that $X$ is closed and $U$ is open in $M$. Then there exists a closed neighborhood, $Z$, of $X$, such that $X\subset Z \subset U$, and such that the inclusion map $Z\hookrightarrow M$ is a cofibration.
	\end{lem}
	We continue with the proof of Theorem \ref{thm:htpyCharacterization}.
	Denote by $A^{1}_{1-2\eps} := \overline{B_1} \setminus \overline{B_{1-2\eps}}$ the annulus of radii $1$ and $1-2\eps$. $A^{1}_{1-2\eps}$ is open in the topology of $\overline{B_1}$.
	We proceed in several steps, in each we modify the function constructed in the previous, culminating in the desired function $\Psi$ satisfying what is needed. Figure \ref{fig3} depicts all the balls involved in the construction.
	\\ \textbf{Step 1:} We construct $\widetilde{\Phi}\colon \Phi^{-1}(A^{1}_{1-2\eps}) \to A^{1}_{1-2\eps}$ such that: 
	\begin{enumerate}
		\item $\widetilde{\Phi}\vert^{}_{U} \equiv f$.
		\item $\widetilde{\Phi}\vert^{}_{\Phi^{-1}(\overline{B_{1-\eps}}\setminus B_{1-3\eps/2})}\equiv \Phi\vert^{}_{\Phi^{-1}(\overline{B_{1-\eps}}\setminus B_{1-3\eps/2})}$.
	\end{enumerate}
	Set:
	\begin{align*}
	Q&:= X \cup \Phi^{-1}(\overline{B_{1-\eps}}\setminus B_{1-3\eps/2}). \\
	W&:= U \cup \mathcal N\left(\Phi^{-1}(\overline{B_{1-\eps}}\setminus B_{1-3\eps/2})\right).
	\end{align*}
	Where $\mathcal N := \mathcal N\left(\Phi^{-1}(\overline{B_{1-\eps}}\setminus B_{1-3\eps/2})\right)$ is an open neighborhood of $\Phi^{-1}\left(\overline{B_{1-\eps}}\setminus B_{1-3\eps/2}\right)$ chosen to be small enough such that $\mathcal N \cap U = \emptyset$ and such that $\Phi(\mathcal N)\subset A^{1}_{1-2\eps}$.
	The set $Q$ is a closed subset of $\Phi^{-1}(A^{1}_{1-2\eps})$ and $W$ is an open neighborhood of $Q$, therefore by the above Lemma \ref{lem:cofib_thicken}, (Applied with $M=\Phi^{-1}(A^{1}_{1-2\eps})$, $U=W$) there exists a closed neighborhood, $Z$, of $Q$, where $Q\subset Z\subset W$ such that the inclusion $Z \hookrightarrow \Phi^{-1}(A^{1}_{1-2\eps})$ is a cofibration.
	Define a homotopy $h_t \colon Z\times [0,1] \to A^{1}_{1-2\eps}$ by:
	\begin{align*}
	h_t(x):=
	\begin{cases*}
	f_t(x) & 	$x\in U$ \\
	\Phi(x) &   $x\in \mathcal N$
	\end{cases*}		
	\end{align*} 
	$h_0 = \Phi\vert^{}_Z \colon Z \to A^{1}_{1-2\eps}$ has an extension $\Phi\vert^{}_{\Phi^{-1}({A^{1}_{1-2\eps}})}\colon \Phi^{-1}(A^{1}_{1-2\eps}) \to A^{1}_{1-2\eps}$ and hence
	by the cofibration property, the homotopy $h_t$ extends to a function $H_t \colon  \Phi^{-1}(A^{1}_{1-2\eps})\times[0,1] \to A^{1}_{1-2\eps}$ such that $H_t\vert^{}_Z = h_t$. By the smooth approximation theorem (Whitney's approximation), since $h_t$ is already smooth we can choose $H_t$ to be smooth.
	Define: \[\widetilde{\Phi} := H_1\colon \Phi^{-1}(A^{1}_{1-2\eps}) \to A^{1}_{1-2\eps}.\]
	$\widetilde{\Phi}$ has the desired properties near $X$ but is not yet defined on all of $M$.
	\\ \\ \textbf{Step 2:} We construct $\widehat{\Phi}\colon M \to \overline{B_1}$, (Now defined on all of $M$) with the following properties:
	\begin{enumerate}
		\item $\widehat{\Phi}\vert^{}_X \equiv \widetilde{\Phi}\vert^{}_X$.
		\item $\widehat{\Phi}^{-1}(B_{1-3\eps}) = \Phi^{-1}(B_{1-3\eps})$.
		\item $\widehat{\Phi}\vert^{}_{\Phi^{-1}(B_{1-3\eps})}\equiv \Phi\vert^{}_{\Phi^{-1}(B_{1-3\eps})}$.
	\end{enumerate}
	We define $\widehat{\Phi}\colon M \to \overline{B_1}$ by
	\begin{align*}
	\widehat{\Phi}:=
	\begin{cases*}
	\Phi(x) & 	$x\in \Phi^{-1}\left(B_{1-5/4\eps}\right)$ \\
	\widetilde{\Phi}(x) &   Otherwise.
	\end{cases*}		
	\end{align*} 
	Since $\widetilde{\Phi}\vert^{}_{\mathcal N} = \Phi\vert^{}_\mathcal{N}$, and since the boundary of $\Phi^{-1}\left(B_{1-5/4\eps}\right)$ is contained in $\mathcal {N}$, the map $\widehat{\Phi}$ is indeed smooth.
	\\ \textbf{Step 3:} We construct $\Psi\colon M \to \overline{B_1}$, using Corollary \ref{cor:pb_bound_retract} to obtain a function with the desired bounds on the Poisson bracket.\\	
	Let $T\colon \mathbb R^2 \to \overline{B_{1-3\eps}}$ be an $\eps$-pseudoretract, and denote by $\mathcal H_{\frac{1}{1-3\eps}}$ the homothety by a factor of $\frac {1}{\sqrt{1-3\epsilon}}$. 
	Define:
	\[
	\Psi:= \mathcal H_{\frac{1}{1-3\eps}} \circ T\circ \widehat{\Phi}.
	\]
	Now, by Proposition \ref{prop:pseudoretract_properties} we have $\left\Vert\left\{\Psi_1, \Psi_2\right\}\right\Vert \le \frac{1+\eps}{1-3\eps}\cdot\left\Vert\left\{\Phi_1, \Phi_2\right\}\right\Vert$.

	By shrinking the neighborhoods $U_k$ of $X_k$ so that $U_k\subset Z$ we have $\Psi(U_k) = f(U_k) \subset \gamma^{}_k$ for all $1\le k \le 3$. Thus $\Psi$ is $\Pb_3(X_1,X_2,X_3)$-admissible. WLOG, we can assume that $\Phi$ is (CS) with respect to $p=(0,0)$ (otherwise we move $p$ by the methods of Proposition \ref{prop:pb_invariance}). Therefore it holds that $\widehat{\Phi}$ is (CS), and hence also $\Psi$, as they are obtained by post-compositions. We have shown that for all $\eps>0$ small enough that:
	\[
	\frac{1+\eps}{1-3\eps}\cdot \Pb_X(\alpha) \ge \Pb_3(X_1,X_2,X_3).
	\]
	By sending $\eps \to 0$ the result follows.
	
	\begin{figure}[h]
		\centering
		\includegraphics[scale=0.37]{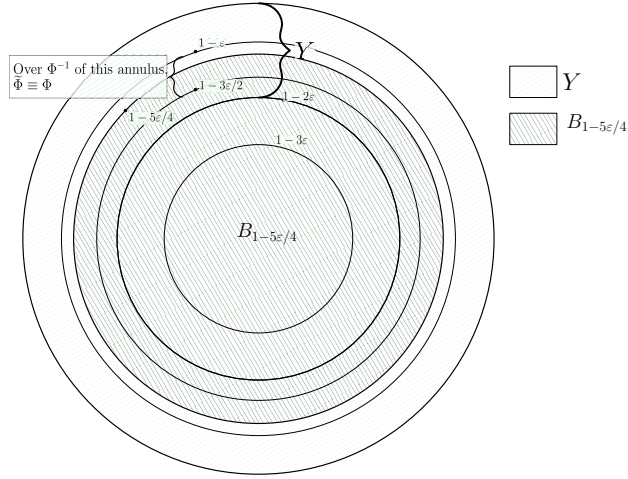}
		\caption{$\overline{B_1}$ and its subsets involved in the proof.}
		\label{fig3}
	\end{figure}
	\end{proof}
	\subsection{Proof of Lemma \ref{lem:cofib_thicken} (Thickening a set to a cofibration)}
	
	In this section we prove the following Lemma:
	\begin{lem*}
			Let $M$ be a manifold, and let $X\subset U\subset M$ such that $X$ is closed and $U$ is open in $M$. Then there exists a closed neighborhood, $Z$, of $X$, such that $X\subset Z \subset U$ and such that the inclusion map $Z\hookrightarrow M$ is a cofibration.
	\end{lem*}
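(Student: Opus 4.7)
The plan is to construct $Z$ as a smooth codimension-$0$ submanifold with boundary (a sublevel set of a smooth function at a regular value), and then to verify the homotopy extension property by exhibiting an NDR pair structure coming from the collar of $\partial Z$.

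For the construction of $Z$, since $X$ is closed and $U$ is open in the smooth manifold $M$, the smooth Urysohn lemma (via a smooth partition of unity on the paracompact manifold $M$) produces a smooth function $\varphi\colon M\to [0,1]$ with $\varphi\equiv 0$ on an open neighborhood $V$ of $X$ satisfying $\overline{V}\subset U$, and $\varphi\equiv 1$ on $M\setminus U$. By Sard's theorem the set of regular values of $\varphi$ has full measure in $[0,1]$, so I pick a regular value $c\in(0,1)$ and set $Z:=\varphi^{-1}([0,c])$. Then $X\subset V\subset Z\subset U$, the set $Z$ is closed in $M$, and $\partial Z=\varphi^{-1}(c)$ is a smooth hypersurface, so $Z$ is a smooth codimension-$0$ submanifold of $M$ with boundary.

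To show that the inclusion $Z\hookrightarrow M$ is a cofibration I use the standard NDR criterion: a closed inclusion is a cofibration if and only if the pair admits an NDR representation. The collar neighborhood theorem, applied to the manifold with boundary $\overline{M\setminus\operatorname{Int} Z}$ whose boundary is $\partial Z$, yields a diffeomorphism $\Psi\colon \partial Z\times[0,1)\xrightarrow{\sim} N$ onto an open neighborhood $N$ of $\partial Z$ in $\overline{M\setminus\operatorname{Int} Z}$ with $\Psi(y,0)=y$. From these data I build the NDR pair: define $u\colon M\to[0,1]$ to vanish on $Z$, to equal a bump of the collar coordinate (smoothly cut off so as to reach $1$ before exiting $N$) on $N$, and to equal $1$ outside $Z\cup N$; and define $h\colon M\times[0,1]\to M$ to be the identity outside $N$ and on $Z$, and on $N$ to push along the collar by $h_t(\Psi(y,s))=\Psi(y,\max(s-t\beta(s),0))$, where $\beta$ is a bump in the collar coordinate equal to $1$ near $s=0$ and vanishing before the end of $N$, so that $h_t$ matches the identity where the two definitions meet. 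By construction $h_0=\operatorname{id}_M$, $h_t|_Z=\operatorname{id}_Z$ for all $t$, and $h_1$ sends every point with $u<1$ (which lies in $Z\cup N$) into $Z$. Thus $(u,h)$ is an NDR representation of the closed pair $(M,Z)$, and the standard NDR-versus-cofibration theorem gives the desired cofibration.

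The only real subtlety is the compatibility of the collar retraction with the identity on $M\setminus N$: the cut-off $\beta$ in the collar coordinate guarantees smoothness and agreement on the overlap, so this is a bookkeeping exercise rather than a conceptual obstacle. Noncompactness of $X$ is harmless because both partitions of unity and the collar neighborhood theorem apply to arbitrary (paracompact) smooth manifolds and their closed submanifolds with boundary.
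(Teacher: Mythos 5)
Your proof is correct, and your construction of $Z$ coincides with the paper's (a sublevel set at a Sard-regular value), but your argument for the cofibration property takes a genuinely different route. The paper proves that any embedded codimension-zero submanifold with boundary $Z\subset M$ is a cofibration by invoking Whitehead's smooth triangulation theorem: a triangulation of $\partial Z$ extends to triangulations of both $Z$ and $M\setminus\operatorname{Int} Z$, realising $Z$ as a subcomplex of a CW structure on $M$, and subcomplex inclusions are cofibrations by HELP. You instead produce the NDR representation directly from a collar of $\partial Z$ in $M\setminus\operatorname{Int} Z$ and apply Str\o m's NDR-versus-cofibration characterisation. The collar approach is more self-contained for a differential-geometric audience and entirely avoids triangulation theory, at the cost of the mild bookkeeping you flag (choosing the two collar-coordinate bumps $\beta$ and $u$ compatibly so that $h_1$ sends $\{u<1\}$ into $Z$); the paper's route is shorter once Whitehead's theorem is granted. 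A small bonus of your construction: by taking $\varphi\equiv 1$ on $M\setminus U$ before applying Sard, you guarantee $\varphi^{-1}([0,c])\subset U$ for every $c\in(0,1)$, whereas the paper's phrasing (any smooth $h\ge 0$ with $h^{-1}(0)=X$, then a small regular value) leaves a gap when $M\setminus U$ is noncompact, since $\inf_{M\setminus U}h$ can be $0$; one must choose $h$ to be bounded away from zero off $U$, exactly as you do.
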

	The following appears in \cite{May} as a corollary of Theorem "HELP" (Homotopy Extension and Lifting Property).
	\begin{prop}
		Let $X$ be a CW-complex and let $i\colon A \to X$ be the inclusion of a subcomplex, then $i$ is a cofibration.
	\end{prop}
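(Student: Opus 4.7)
The plan is to establish the homotopy extension property for $(X,A)$ by building up $X$ from $A$ one skeleton at a time, and exhibiting an explicit retraction at each stage. A map $i\colon A\hookrightarrow X$ is a cofibration if and only if $X\times\{0\}\cup A\times[0,1]$ is a retract of $X\times[0,1]$, so throughout I will work with retractions rather than diagrams.

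First, I would prove the model case: the inclusion $S^{n-1}\hookrightarrow D^n$ is a cofibration. The retraction $r\colon D^n\times[0,1]\to D^n\times\{0\}\cup S^{n-1}\times[0,1]$ is given by projection from the point $(0,2)\in D^n\times\mathbb{R}$ onto the target subset; writing this out explicitly in coordinates gives a continuous (and piecewise-smooth) retraction. Taking disjoint unions, the same formula shows $\coprod_\alpha S^{n-1}\hookrightarrow \coprod_\alpha D^n$ is a cofibration for any indexing set.

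Next I would establish two general stability properties of cofibrations. (a) Pushouts: given a cofibration $j\colon C\hookrightarrow D$ and any map $f\colon C\to Y$, the induced map $Y\hookrightarrow Y\cup_C D$ is a cofibration; the retraction is built by glueing $\mathrm{id}_{Y\times[0,1]}$ to the pullback of the retraction for $(D,C)$, which is well-defined on the pushout because the two agree along $C\times[0,1]$. (b) Sequential colimits: if $Y_0\hookrightarrow Y_1\hookrightarrow Y_2\hookrightarrow\cdots$ is a sequence of cofibrations and $Y_\infty=\mathrm{colim}\,Y_n$ carries the weak/colimit topology, then $Y_0\hookrightarrow Y_\infty$ is a cofibration. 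Here one rescales the time coordinate on each retraction: on $Y_n\times[\tfrac{1}{n+1},\tfrac{1}{n}]$ use a reparametrized version of the retraction $Y_n\times[0,1]\to Y_n\times\{0\}\cup Y_0\times[0,1]$, and verify continuity using the colimit topology on $Y_\infty\times[0,1]$ (this is where compact generation / $k$-spaces enters, but for CW complexes $Y_\infty\times[0,1]$ carries the correct topology).

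Finally, I would assemble these pieces using the CW structure. Set $X_n=A\cup X^{(n)}$, where $X^{(n)}$ is the $n$-skeleton of $X$. Each $X_n$ is obtained from $X_{n-1}$ by the pushout
\[
\begin{tikzcd}
\coprod_\alpha S^{n-1} \arrow[r] \arrow[hookrightarrow,d] & X_{n-1} \arrow[hookrightarrow,d] \\
\coprod_\alpha D^n \arrow[r] & X_n
\end{tikzcd}
\]
indexed over the $n$-cells of $X$ not lying in $A$. By the model case together with stability under pushouts, $X_{n-1}\hookrightarrow X_n$ is a cofibration for every $n$; by stability under sequential colimits, $A=X_{-1}\hookrightarrow X=\mathrm{colim}_n\,X_n$ is a cofibration, which is the claim. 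The main obstacle is the colimit step: one must verify that the inductively constructed retractions on the $X_n\times[0,1]$ assemble into a continuous retraction on $X\times[0,1]$, which relies on the weak topology characterization of CW-complexes to identify $X\times[0,1]$ with the colimit of $X_n\times[0,1]$.
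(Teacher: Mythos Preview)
Your argument is correct and follows the standard direct route found in many textbooks (Hatcher, tom~Dieck, etc.): reduce to the model pair $(D^n,S^{n-1})$ via an explicit radial retraction, then propagate through pushouts and the skeletal colimit. The colimit step is indeed the delicate point; your remark that $X\times[0,1]$ carries the colimit topology because $[0,1]$ is (locally) compact is exactly what is needed there.

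However, the paper does not actually prove this proposition. It is quoted as a known fact from \cite{May}, where it is derived as a corollary of the theorem called ``HELP'' (Homotopy Extension and Lifting Property). So the comparison is not between two proofs but between a self-contained construction (yours) and a citation. May's HELP argument is more abstract---it packages the inductive cell-by-cell extension into a single lifting/extension statement---whereas your approach makes the retractions explicit at every stage. Your proof is perfectly acceptable and arguably more transparent for the purpose at hand; the paper simply opted to outsource the result since it is peripheral to the symplectic content.
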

	We will also need a theorem of Whitehead about existence of triangulations for smooth manifolds with boundary, see \cite{whitehead1940c1} and \cite{munkres2016elementary}:
	\begin{prop}[Whitehead]
		If $M$ is a smooth para-compact manifold with boundary, then every smooth triangulation of $\partial M$ can be extended to a smooth triangulation of $M$
	\end{prop}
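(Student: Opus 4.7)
The plan is to combine three standard tools: the collar neighborhood theorem to separate off a neighborhood of $\partial M$, the prism product decomposition to transport the given triangulation of $\partial M$ one unit into $M$, and Whitehead's existence theorem for smooth triangulations of paracompact manifolds without boundary applied to the remaining piece, with a careful matching along the interface.

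\textbf{Step 1 (Collaring).} Apply the collar neighborhood theorem to obtain a smooth open embedding $c\colon \partial M \times [0,2) \hookrightarrow M$ with $c(x,0)=x$. Write $C := c(\partial M \times [0,1])$ for the closed collar and $N := \overline{M \setminus c(\partial M \times [0,1))}$. Then $M = C \cup N$, the overlap $C \cap N$ equals $c(\partial M \times \{1\})$, and $N$ is itself a smooth paracompact manifold whose boundary is this copy of $\partial M$.

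\textbf{Step 2 (Triangulating the collar).} Let $K$ denote the given smooth triangulation of $\partial M$. For each simplex $\sigma \in K$, order its vertices and decompose the prism $\sigma \times [0,1]$ into $(\dim \sigma + 1)$-simplices by the classical order-preserving triangulation. These decompositions are compatible on faces and yield a smooth triangulation of $\partial M \times [0,1]$ whose restriction to $\partial M \times \{0\}$ is $K$ and whose restriction to $\partial M \times \{1\}$ is a canonical copy $K'$ of $K$. Transporting via $c$ produces a smooth triangulation $T_C$ of $C$.

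\textbf{Step 3 (Triangulating the interior and matching).} Slightly thicken $N$ to an open manifold $\widetilde{N} \supset N$ across its boundary using another short collar. By Whitehead's existence theorem for smooth triangulations of open paracompact manifolds (which should be invoked or proved separately, most cleanly via the Cairns--Whitney embedding approach), $\widetilde{N}$ admits a smooth triangulation $T$. In general the restriction $T|_{\partial N}$ disagrees with $K'$. Invoke the companion Whitehead uniqueness theorem: any two smooth triangulations of $\partial M$ admit a common subdivision realized through a smooth ambient isotopy. Use this to produce a smooth ambient isotopy of a neighborhood of $\partial N$ in $\widetilde{N}$ carrying $T|_{\partial N}$ to $K'$, cut off by the identity outside the collar, and pull back $T$ to obtain a smooth triangulation of $N$ restricting to $K'$ on $\partial N$. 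Gluing this with $T_C$ along $C \cap N$ produces the desired smooth triangulation of $M$.

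\textbf{Main obstacle.} The technically hardest ingredient is the uniqueness/common-subdivision statement invoked in Step 3: given two smooth triangulations of a smooth manifold, they admit a common piecewise-linear refinement after a small smooth ambient isotopy. Whitehead's treatment proceeds by secant approximation, replacing each smooth simplex by the PL simplex spanned by its vertices, putting vertex sets into general position, and verifying that sufficiently fine subdivisions preserve the property of being a smooth triangulation (nondegeneracy of Jacobians on each simplex). Globalizing this construction from coordinate charts to all of $N$ is precisely where paracompactness enters, via a locally finite partition of unity used to patch the local PL perturbations; this, rather than the prism construction of Step 2, is the genuine content of the theorem.
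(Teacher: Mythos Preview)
The paper does not prove this proposition at all: it is stated as a classical result of Whitehead, with references to \cite{whitehead1940c1} and \cite{munkres2016elementary}, and is used as a black box in the subsequent corollary. There is therefore no ``paper's own proof'' to compare against.

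As for the content of your sketch: the collar-plus-prism reduction (Steps~1--2) is standard and fine, and you are right that the entire difficulty is concentrated in Step~3. However, the way you propose to resolve the matching is not quite straight. Whitehead's uniqueness theorem gives a common \emph{subdivision} of two smooth triangulations of $\partial M$, not an ambient isotopy carrying one to the other; and even granting an isotopy of $\partial N$, pulling back the triangulation $T$ of $\widetilde N$ by a diffeomorphism supported near $\partial N$ does not obviously produce a simplicial complex whose restriction to $\partial N$ is exactly $K'$ rather than merely isotopic to it. The route actually taken in Munkres is different: one proves directly a relative existence theorem, building the triangulation of $M$ chart by chart while keeping a prescribed triangulation fixed on a closed subpolyhedron (here $\partial M$), using secant approximation and general-position arguments at each inductive step. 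Your outline is a reasonable heuristic for why the theorem should hold, but as written Step~3 invokes a statement (isotopy rather than common subdivision) that is at least as hard as the theorem you are trying to prove.
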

	As a corollary we have:
	\begin{cor}\label{prop:nbhdMBdryCofibration}
		Let $M$ be a $d$-dimensional manifold and let $N$ be a $d$-dimensional connected manifold with boundary. Then any embedding $i\colon N \to M$ is a cofibration.
	\end{cor}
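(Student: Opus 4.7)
The plan is to exhibit a CW-structure on $M$ in which $i(N)$ is a subcomplex, and then invoke the immediately preceding proposition (inclusion of a subcomplex of a CW-complex is a cofibration). The bridge between the smooth category and the combinatorial one is supplied by Whitehead's extension theorem, which has already been recorded.

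The first step will be to observe that because $i$ is an embedding of a $d$-manifold with boundary into a $d$-manifold (same dimension), the image $i(N)$ is a closed subset of $M$, its topological interior in $M$ is $i(\operatorname{Int} N)$, and its topological boundary in $M$ is the smoothly embedded $(d-1)$-dimensional submanifold $i(\partial N)$. Using a collar of $\partial N$ in $N$ and another (smooth) collar of $i(\partial N)$ on the ``outside'' in $M$, one checks that $\overline{M\setminus i(N)}$ carries a canonical structure of a smooth manifold with boundary equal to $i(\partial N)$. Thus $i(\partial N)$ separates $M$ into two smooth manifolds with boundary that agree along this common face.

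Next I would pick any smooth triangulation $\mathcal{T}_\partial$ of $i(\partial N)$ (such triangulations exist by classical results on smooth manifolds, or by applying Whitehead's theorem to $i(\partial N)$ itself viewed as a closed manifold). Apply Whitehead's theorem twice: once to extend $\mathcal{T}_\partial$ to a smooth triangulation $\mathcal{T}_N$ of $i(N)$, and once to extend $\mathcal{T}_\partial$ to a smooth triangulation $\mathcal{T}_{\mathrm{ext}}$ of $\overline{M\setminus i(N)}$. Since both extensions agree on $\mathcal{T}_\partial$, they glue to a smooth triangulation $\mathcal{T}$ of $M$, and by construction $\mathcal{T}_N$ is a full subcomplex of $\mathcal{T}$. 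Consequently $i(N)\hookrightarrow M$ is the inclusion of a CW-subcomplex, and the cited proposition produces the cofibration.

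The main obstacle is the verification that $\overline{M\setminus i(N)}$ is genuinely a smooth manifold with boundary (not just a topological one) so that Whitehead's theorem applies; this hinges on the existence of a two-sided collar for the smoothly embedded hypersurface $i(\partial N)\subset M$, which is standard. A secondary subtlety is non-compactness: Whitehead's theorem as quoted is for paracompact manifolds with boundary, which covers the manifolds appearing in the paper, so no extra work is required. Connectedness of $N$ plays no essential role in the argument beyond making the statement clean, and one may drop it by applying the construction componentwise.
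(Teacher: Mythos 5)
Your argument is essentially identical to the paper's: split $M$ along the hypersurface $i(\partial N)$ into $i(N)$ and $\overline{M\setminus i(N)}$, triangulate the common boundary, extend via Whitehead's theorem to each piece, glue to obtain a triangulation of $M$ with $i(N)$ as a subcomplex, and invoke the CW-subcomplex cofibration result. The paper states this more tersely; your extra discussion of collars and the smooth structure on $\overline{M\setminus i(N)}$ is exactly the standard verification the paper leaves implicit.
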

	\begin{proof}
		For brevity identify $N$ with its image in $M$. Consider $N$ and $M\setminus \operatorname{Int} N$, they are both manifolds with a common boundary $\partial N$. Choose a triangulation of $\partial N$. Extending to both $N$ and $M\setminus \operatorname{Int} N$ yields a triangulation of $M$ such that $N$ is a subcomplex. Any triangulation induces CW-structure in the obvious way, therefore $i\colon N \to M$ is a cofibration.
	\end{proof}
	We can now prove Lemma \ref{lem:cofib_thicken}.
	\begin{proof}
		In light of Proposition \ref{prop:nbhdMBdryCofibration} it is enough to show that there exists a closed neighborhood, $Z$, of $X$, contained in $U$, such that $Z$ is an embedded $\dim\!M$-dimensional manifold with boundary.
		Since $X$ is a compact subset of $M$, there exists a smooth function, $h\colon M \to [0,\infty)$, such that $h^{-1}(0) = X$. By Sard's theorem, the critical values of $h$ are of measure $0$ in $[0,\infty)$, therefore there exists a regular value $r\in[0,\infty)$ such that $h^{-1}\left([0,r)]\right) \subset U$. Set $Z = h^{-1}\left([0,r)]\right)$. $Z$ is an embedded $\dim\!M$-dimensional manifold with boundary, therefore $i\colon Z\to M$ is a cofibration.
	\end{proof}

	\subsection{Proof of Theorem \ref{thm:pbaSubhomogeneous} - Subhomogeneity of $\Pb_X(\cdot)$}
	In this section we prove the following:
	\begin{thm}
		Let $X$ be compact subset of a symplectic manifold $M$. Then for all $\alpha \in H^1(X;\mathbb Z)$ and for all $0<k\in \mathbb N$ we have:
		\[
		\Pb^{}_X(k\alpha) \le k \cdot \Pb^{}_X(\alpha).
		\]
	\end{thm}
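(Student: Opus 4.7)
The plan is to construct, for each $\eps > 0$, a smooth self-map $F_\eps \colon \overline{B_1} \to \overline{B_1}$ whose restriction to $S^1$ is the $k$-th power map $z \mapsto z^k$ and whose Jacobian is pointwise bounded by $k(1+\eps)$; postcomposing admissible maps for $\Pb_X(\alpha)$ with $F_\eps$ will then produce admissible maps for $\Pb_X(k\alpha)$ whose Poisson bracket is multiplied by at most $k(1+\eps)$. Concretely, given an admissible $\phi \colon M \to \overline{B_1}$ for $\Pb_X(\alpha)$ with $\phi(U) \subseteq S^1$ on some neighborhood $U$ of $X$ and $\Vert\{\phi_1,\phi_2\}\Vert \le \Pb_X(\alpha) + \eps$, set $\psi := F_\eps \circ \phi$. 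Then $\psi$ is smooth, $\psi(U) \subseteq F_\eps(S^1) = S^1$, and since $F_\eps|_{S^1}$ induces multiplication by $k$ on $[X:S^1] \cong H^1(X;\mathbb{Z})$, the class $[\psi|_X]$ equals $k\alpha$. The (CS) condition carries over since postcomposition by a smooth map sends the constant value at infinity to a constant value at infinity. Using $F_\eps^*\omega_{\mathbb{R}^2} = \operatorname{Jac}(F_\eps) \cdot \omega_{\mathbb{R}^2}$, one obtains the pointwise identity $\{\psi_1,\psi_2\}(x) = \operatorname{Jac}(F_\eps)(\phi(x)) \cdot \{\phi_1,\phi_2\}(x)$ (via the formula for $\{\cdot,\cdot\}$ in Claim \ref{clm:symp_invariance}, as already used in Proposition \ref{prop:pseudoretract_properties}), whence $\Vert\{\psi_1,\psi_2\}\Vert \le k(1+\eps)(\Pb_X(\alpha) + \eps)$. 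Taking the infimum over $\phi$ and letting $\eps \to 0$ yields the theorem.

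For the construction of $F_\eps$, I work in polar coordinates and set $F_\eps(r,\theta) = (h(r), k\theta)$ for a smooth $h \colon [0,1] \to [0,1]$ satisfying $h(1) = 1$; this form automatically restricts to $z \mapsto z^k$ on $S^1$. A direct computation gives $\operatorname{Jac}(F_\eps) = k h(r) h'(r)/r$, so the Jacobian bound reduces to the differential inequality $h(r) h'(r) \le r(1+\eps)$, or equivalently $h(r)^2 \le (1+\eps)r^2$. To ensure smoothness at the origin, I require $h(r) = r^k$ on some initial interval $[0,r_*]$; then $F_\eps(z) = z^k$ for $|z| \le r_*$, manifestly smooth. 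On $[r_*,1]$ one defines $h$ by smooth interpolation from $h(r_*) = r_*^k$ up to $h(1) = 1$ subject to the pointwise constraint $hh' \le r(1+\eps)$. The integrated form of this constraint, $(1+\eps)r_*^2 - r_*^{2k} \le \eps$, is satisfied once $r_*$ is small relative to $\sqrt{\eps}$; after which a concrete smooth realization (matching the $r^k$-profile to all orders at $r_*$ and the identity-profile near $1$ via standard bump-function interpolation) is a routine calculus exercise.

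The main obstacle is precisely this balancing act between smoothness at the origin and tight Jacobian control. The canonical smooth extension $F(z) = z^k$ has Jacobian $k^2 r^{2k-2}$, reaching $k^2$ at the boundary, and so overshoots the desired bound by a full factor of $k$; conversely the area-optimal choice $h(r) = r$ gives Jacobian exactly $k$ but produces $F(z) = z^k/|z|^{k-1}$, which is continuous but fails to be $C^1$ at the origin for $k \ge 2$. The construction above exploits the slack in the integrated constraint $h(1)^2 = 1 \le 1+\eps$ to smooth the latter near the origin at the cost of an arbitrarily small $\eps$-loss in the Jacobian bound, which is exactly what the inequality $\Pb_X(k\alpha) \le k \Pb_X(\alpha)$ requires in the limit $\eps \to 0$.
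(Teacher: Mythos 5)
Your proposal is correct and follows essentially the same route as the paper: postcompose an admissible map for $\Pb_X(\alpha)$ with a smooth self-map of the disc that restricts to $z\mapsto z^k$ on $S^1$ and whose Jacobian is everywhere at most $k(1+\eps)$, then send $\eps\to 0$. The only difference is the mechanism for achieving smoothness at the origin: the paper first applies the exact angular $k$-fold map $R_k(re^{i\theta})=re^{ik\theta}$ (Jacobian exactly $k$, not smooth at $0$) and then composes with a smooth map collapsing $\overline{B_\eps}$ to a point, picking up a factor $\frac{1+\eps}{1-\eps}$; you instead build a single radial profile $h$ equal to $r^k$ near $0$ and satisfying $hh'\le r(1+\eps)$, using the slack in the integrated constraint to interpolate up to $h(1)=1$ — a slightly more explicit but equivalent implementation of the same idea.
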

	\begin{proof}
		Let $\Phi \colon M \to \overline{B_1}$ be a function admissible for $\Pb^{}_X(\alpha)$. We construct a function admissible for $\Pb^{}_X(k\alpha)$ in the following way:
		Consider $R_k\colon \overline{B_1} \to \overline{B_1}$ defined in polar coordinates by
		\[R_k\left(re^{i\theta}\right)=re^{ik\theta}.\]
		This is a smooth function except for the origin.
		For every $\eps > 0$ consider $T\colon \overline{B_1} \to \overline{B_{1-\eps}}$ defined by collapsing the disc $\overline{B_\eps}$ around the origin to point, smoothly, in a similar fashion to what is done in the construction of $\eps$-pseudoretracts, that is
		\[ 
			T(re^{i\theta}) = \rho\left(r\right)e^{i\theta},
		\]
		where $\rho\colon [0,\infty) \to [0,\infty)$ is a function such that $\rho(x)\vert^{}_{[1/2,\infty)} = x - \eps$, $\rho\vert^{}_{[0,\eps]} = 0$ and $0\le\rho^\prime \le 1 + \eps$.
		The function $\rho$ is constant near $0$, therefore the composition $T\circ R_k$ is smooth. Define:
		\[\Psi := \mathcal{H}_{\frac{1}{1-\eps}} \circ T \circ R_k.\]
		$\Psi$ is a function admissible for $\Pb^{}_X(k\alpha)$. Note that
		\[\left\Vert\left\lbrace{\Psi_1,\Psi_2}\right\rbrace \right\Vert\le k\cdot\frac{1+\eps}{1-\eps}\left\Vert\left\lbrace{\Phi_1,\Phi_2}\right\rbrace\right\Vert.\]
		Hence for all $\eps>0$ we have $\Pb^{}_X(k\alpha) \le k \cdot\frac{1+\eps}{1-\eps} \cdot \Pb^{}_X(\alpha)$. The result follows by sending $\eps\to 0$.
	\end{proof}
	
	\bibliographystyle{alpha}

	\bibliography{Bibliography}
	\bigskip
	\medskip

	\noindent Yaniv Ganor \\
	School of Mathematical Sciences \\
	Tel Aviv University \\
	Tel Aviv 69978, Israel \\
	\texttt{yanivgan@post.tau.ac.il} \\
	\medskip
	
\end{document}